\documentclass[12pt]{amsart}
\usepackage{a4wide}
\usepackage[utf8]{inputenc}
\usepackage{amsmath}
\usepackage{amsthm}
\usepackage{amssymb}
\usepackage{amsfonts}
\usepackage{amsopn}
\usepackage{graphicx}
\usepackage{enumerate}
\usepackage{color}
\usepackage{mathtools}
\usepackage{mathrsfs}
\usepackage{bbm}
\usepackage{yhmath}
\usepackage{cite}
\usepackage[colorlinks,linkcolor=blue,anchorcolor=blue,citecolor=blue]{hyperref}

\newtheorem{theorem}{Theorem}[section]
\newtheorem{proposition}[theorem]{Proposition}
\newtheorem{lemma}[theorem]{Lemma}
\newtheorem{definition}[theorem]{Definition}
\newtheorem{corollary}[theorem]{Corollary}

\theoremstyle{definition}

\numberwithin{equation}{section}

\newcommand{\mbf}{\mathbf}
\newcommand{\mcal}{\mathcal}

\newcommand{\set}[1]{\left\{ #1 \right\}}

\newcommand{\bu}{\boldsymbol{\alpha}}

\newcommand{\R}{\mathbb{R}}

\newcommand{\Z}{\mathbb{Z}}
\newcommand{\N}{\mathbb{N}}

\newcommand{\f}{\infty}

\newcommand{\wh}[1]{\widehat{#1}}

\newcommand{\sse}{\subseteq}

\newcommand{\D}{\;\mathrm{d}}

\newcommand{\bp}{\mathbf{p}}
\newcommand{\bw}{\boldsymbol{\omega}}
\newcommand{\bn}{\mathbf{n}}
\newcommand{\bv}{\boldsymbol{\beta}}

\newcommand{\rmnum}[1]{\romannumeral #1}

\newcommand{\red}[1]{{\color{red}#1}}

\title[Spectrality and eigen sets of infinite convolutions and random measures]{Spectrality and eigen sets of infinite convolutions and random measures generated by admissible pairs}

\author[J. J. Miao]{Jun Jie Miao}
\address[J. J. Miao]{School of Mathematical Sciences,  Key Laboratory of MEA (Ministry of Education) \& Shanghai Key Laboratory of PMMP,  East China Normal University, Shanghai 200241, China}
\email{jjmiao@math.ecnu.edu.cn}

\author[H. Zhao]{Hongbo Zhao}
\address[H. Zhao]{School of Mathematical Sciences, Shanghai Key Laboratory of PMMP, East China Normal University, Shanghai 200241,
	People's Republic of China}
\email{2504245357@qq.com}

\subjclass[2010]{28A80, 42C30, 60B10}

\begin{document}
	
\keywords{random measure, infinite convolution, spectral, spectral eigenvalue}
	
\maketitle

\begin{abstract}

In this paper, we construct a class of random measures $\mu^{\mathbf{n}}$ by infinite convolutions. Given admissible pairs $\{(N_{k}, B_{k})\}_{k=1}^{m}$ and a  sequence  $\bn=\{n_{k}\}_{k=1}^{\infty}$ of  positive integers, for every $\bw\in \Omega$, we write $\mu^{\mathbf{n}}(\bw) =  \delta_{N_{\omega_{1}}^{-n_{1}}B_{\omega_{1}}} * \delta_{N_{\omega_{1}}^{-n_{1}}N_{\omega_{2}}^{-n_{2}}B_{\omega_{2}}} * \cdots$.  
First, we show that the mapping $\mu^{\mathbf{n}}: (\bw, B) \mapsto \mu^{\mathbf{n}}(\bw)(B)$ is a random measure. 
Next, we introduce the notion of a $t$-equi-positive family, and use it to obtain a general sufficient condition under which an infinite convolution is a spectral measure and possesses a specified set of spectral eigenvalues.  
We then extend the concepts of spectrality and spectral eigenvalues to
random measures, and show that, under the assumption that the corresponding
standard infinite convolution is non-degenerate for $\mathbb{P}$-a.e.\
$\boldsymbol{\omega}\in\Omega$, the measures $\mu^{\mathbf{n}}$ are spectral
random measures for $\mathbb{P}$-a.e.\ $\boldsymbol{\omega}$, admitting the
spectral eigen set
\[
\mathcal{E}_m=\{t\in\mathbb{N}_+:\gcd(t,N_k)=1,\,1\le k\le m\}.
\]
Moreover, for each such $t$, there exist uncountably many spectra $\Lambda_{\boldsymbol{\omega}}\subset\mathbb{Z}$ with $t\Lambda_{\boldsymbol{\omega}}$ also a spectrum of $\mu^{\mathbf{n}}(\boldsymbol{\omega})$.  
Finally, for the important case where each digit set $B_k$ is a consecutive set $\{0,1,\dots,b_k-1\}$, we completely characterise the positive integer spectral eigenvalues of $\mu^{\mathbf{n}}(\boldsymbol{\omega})$, proving that they are exactly the integers coprime to every $b_k$.   

\end{abstract}

\section{Introduction}
\subsection{Infinite convolutions and random measures}

For a finite subset $D \sse \R$, the uniform discrete measure supported on $D$ is given by
\begin{equation*}
	\delta_D = \frac{1}{\# D} \sum_{a \in D} \delta_a,
\end{equation*}
where  $\#$ denotes the cardinality of a set and $\delta_a$ is the Dirac measure at the point $a$. Let $\{ D_k\}_{k=1}^\f$ be a sequence of finite subsets of $\R$ such that $\# D_k \ge 2$ for every $k \ge 1$.
For  each integer $k \geq 1$, we define
\begin{equation*}\label{discrete-convolution}
	\nu_k =\delta_{D_1}*\delta_{D_2} * \cdots *\delta_{D_k},
\end{equation*}
where $*$ denotes the convolution of measures.
If the sequence of convolutions $\{\nu_k\}_{k=1}^\f$ converges weakly to a Borel probability measure $\nu$, then we call $\nu$ the \emph{infinite convolution} of $\{{D_k}\}_{k=1}^\infty$, denoted by
\begin{equation*}\label{def_ica}
	\nu =\delta_{D_1}*\delta_{D_2} * \cdots *\delta_{D_k} *\cdots.
\end{equation*}

It is clear that the uniformly distributed self-similar measures and non-autonomous similar measures may be regarded as special cases of infinite convolutions; see \cite{Falco03,GM22,GM}. Given a sequence $\{(N_k,B_k)\}_{k=1}^\infty $ where $N_k\ge 2$ and $B_k\subset\R$ is finite for all $k\in\N_+$. We write
\begin{equation}\label{def_mun}
	\mu_k =\delta_{{N_1}^{-1}B_1}\ast\delta_{(N_2N_1)^{-1}B_2}\ast\dots\ast\delta_{(N_kN_{k-1}\cdots N_1)^{-1}B_k}.
\end{equation}
If the sequence $\{\mu_k\}_{k=1}^\infty$ converges weakly to a Borel probability measure $\mu$, then we call $\mu$ the \emph{infinite convolution} of $\{(N_k,B_k)\}_{k=1}^\infty,$ denoted by
\begin{equation}\label{infinite-convolution}
	\mu =\delta_{{N_1}^{-1}B_1}\ast\delta_{(N_2N_1)^{-1}B_2}\ast\dots\ast\delta_{(N_kN_{k-1}\cdots N_1)^{-1}B_k} *\cdots.
\end{equation}

For each integer $k\geq 1$, we write
\begin{equation}\label{def_mugn}
	\mu_{>k}=\delta_{({N_{k+1}\cdots N_1})^{-1}B_{k+1}}\ast\delta_{(N_{k+2}\cdots N_1)^{-1}B_{k+2}}\ast\cdots,
\end{equation}
and it is clear that $\mu = \mu_k * \mu_{>k}$. Let
\begin{equation}\label{def_nu_n}
	\nu_{>k} = \mu_{>k}\circ(N_kN_{k-1}\cdots N_1)^{-1}=\delta_{N_{k+1}^{-1} B_{k+1}} * \delta_{(N_{k+2} N_{k+1})^{-1} B_{k+2}} * \cdots,
\end{equation}
which is crucial to investigate  the spectrality of infinite convolutions (see Theorem \ref{thm_A-spectral eigen set}).   
We say the infinite convolution $\mu$ of $\{(N_k,B_k)\}_{k=1}^\infty$ is \emph{non-degenerate} if there exists a subsequence $\{\nu_{>n_j}\}_{j=1}^\infty$ that converges weakly to a measure singular to Lebesgue measure.”

Since different permutations of  $\{(N_k,B_k)\}_{k=1}^\infty $ may give rise to different infinite convolutions in \eqref{infinite-convolution}, by randomly choosing a permutation, the infinite convolution becomes a measurable mapping of the permutation; in other words, it defines a random measure. Recall the definition of random measures. Let $(\Omega, \mathcal{F})$ and $(E,\mathcal{B})$ be measurable spaces. A mapping $M: \Omega \times \mathcal{B} \to [0,+\infty]$ is called a \emph{random measure} on $(E,\mathcal{B})$ if $(\rmnum{1})$  the mapping $\bw \mapsto M(\bw, B)$ is $\mathcal{F}$-measurable for every $B\in\mathcal{B}$; and $(\rmnum{2})$ $B \mapsto M(\bw, B)$ is a measure on $(E,\mathcal{B})$ for all $\bw\in\Omega$. See \cite{Cinlar11} for details.

Consider finitely many pairs $\{(N_k,B_k)\}_{k=1}^m$ where $N_k\ge 2$ and $B_k\subset\R$ is finite. Let $\Omega=\{1,2,\dots, m\}^\infty$ be the symbolic space over the alphabet $\{1, 2, \dots, m\}$. We topologise $\Omega$  using the metric $d(\bu,\bv) = 2^{-|\bu \wedge \bv|}$ for all $\bu, \bv \in \Omega$ to make $\Omega$  into a complete metric space, see \cite{BBT08} for details.  Let $\mathcal{F}$ be the Borel $\sigma$-algebra  on $\Omega$. Then $(\Omega, \mathcal{F})$ is a measurable space. We write $\mathcal{P}(\Omega)$ for the set of all Borel probability measures on $\Omega$.

Let $\bn=\{ n_{k}\}_{k=1}^\f$ be a sequence of positive integers. For each integer $k>0$, we write 
\begin{equation*} \label{def_mubk}
	\mu^{\bn}_{k}(\bw) =\delta_{N_{\omega_{1}}^{-n_{1}}B_{\omega_{1}}} * \delta_{N_{\omega_{1}}^{-n_{1}}N_{\omega_{2}}^{-n_{2}}B_{\omega_{2}}} *  \dots * \delta_{N_{\omega_{1}}^{-n_{1}}N_{\omega_{2}}^{-n_{2}}\cdots N_{\omega_{k}}^{-n_{k}}B_{\omega_{k}}},
\end{equation*}
for all $\bw\in \Omega$. Suppose that for every $\bw\in \Omega$, the sequence $\{\mu_k^{\mathbf{n}}(\bw)\}_{k=1}^\infty$ converges weakly to $\mu^{\mathbf{n}}(\bw)$,  and write
\begin{equation*} \label{def_mubn}
	\mu^{\mathbf{n}}(\bw) = \delta_{N_{\omega_{1}}^{-n_{1}}B_{\omega_{1}}} * \delta_{N_{\omega_{1}}^{-n_{1}}N_{\omega_{2}}^{-n_{2}}B_{\omega_{2}}} * \cdots .
\end{equation*}
We define a mapping $\mu^{\mathbf{n}}: \Omega\times \mathcal{B}(\mathbb{R}) \to [0,+\f]$ by
\begin{equation} \label{def_random_measure}
	\mu^{\mathbf{n}}(\bw, B)= \mu^{\mathbf{n}}(\bw)(B),
\end{equation}
for all $\bw \in \Omega$ and all   $B \in \mathcal{B}(\mathbb{R})$.
For simplicity,  if $\bn=\{ 1 \}_{k=1}^\f$, we write
\begin{equation} \label{def_corresponding_P_of_random_measure}
	\mu(\bw) =		\mu^\bn(\bw) = \delta_{N_{\omega_{1}}^{-1}B_{\omega_{1}}} * \delta_{N_{\omega_{1}}^{-1}N_{\omega_{2}}^{-1}B_{\omega_{2}}} * \cdots,
\end{equation}
for all $\bw\in\Omega$. Since $\mu^{\mathbf{n}}(\bw)$ exists for every $\bw\in\Omega$, the mapping $\mu^{\mathbf{n}}$ is well-defined.  

\begin{theorem} \label{thm_measurable}
Given $\{(N_k,B_k)\}_{k=1}^m$ where  $N_k\ge2$ and $B_k\subset \R $ is finite. Then for every  sequence $\mathbf{n}$ of positive integers,  the mapping $\mu^{\mathbf{n}}$ given by \eqref{def_random_measure} is a random measure.
\end{theorem}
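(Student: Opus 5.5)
Condition (\rmnum{2}) holds by assumption: for every $\bw\in\Omega$ the limit $\mu^{\mathbf{n}}(\bw)$ is a Borel probability measure on $\R$. So the work is condition (\rmnum{1}), namely that $\bw\mapsto\mu^{\mathbf{n}}(\bw)(B)$ is $\mathcal{F}$-measurable for every $B\in\mathcal{B}(\R)$.

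\textbf{Step 1: the finite convolutions are measurable.} For fixed $k$, the measure $\mu^{\mathbf{n}}_k(\bw)$ depends only on the first $k$ coordinates $\omega_1,\dots,\omega_k$. The map $\bw\mapsto\mu^{\mathbf{n}}_k(\bw)(B)$ is therefore a simple function, constant on each cylinder $[\omega_1\cdots\omega_k]$. Cylinders are open and closed in the metric $d$, so this map is Borel measurable for every Borel set $B$.

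\textbf{Step 2: pass to the limit for continuous test functions.} Let $f\in C_b(\R)$. Weak convergence gives
\[
\int f\,\mathrm{d}\mu^{\mathbf{n}}(\bw)=\lim_{k\to\infty}\int f\,\mathrm{d}\mu^{\mathbf{n}}_k(\bw)
\]
for every $\bw$. Each function on the right is a finite combination of indicators of cylinders, so the pointwise limit $\bw\mapsto\int f\,\mathrm{d}\mu^{\mathbf{n}}(\bw)$ is measurable.

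\textbf{Step 3: extend from continuous functions to indicators.} Fix an open set $U\subset\R$. Choose continuous $f_j$ with $0\le f_j\uparrow \mathbbm{1}_U$, for instance $f_j(x)=\min\{1,\,j\,\mathrm{dist}(x,U^c)\}$. Monotone convergence then shows that $\bw\mapsto\mu^{\mathbf{n}}(\bw)(U)$ is measurable.

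\textbf{Step 4: Dynkin's $\pi$–$\lambda$ theorem.} Let
\[
\mathcal{D}=\{B\in\mathcal{B}(\R):\bw\mapsto\mu^{\mathbf{n}}(\bw)(B)\text{ is measurable}\}.
\]
Since each $\mu^{\mathbf{n}}(\bw)$ is a probability measure, $\mathcal{D}$ contains $\R$. It is closed under proper differences because $\mu(B\setminus A)=\mu(B)-\mu(A)$ for $A\subset B$. It is closed under increasing unions by monotone convergence. Hence $\mathcal{D}$ is a $\lambda$-system. It contains the open sets, which form a $\pi$-system generating $\mathcal{B}(\R)$, so $\mathcal{D}=\mathcal{B}(\R)$.

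This proves (\rmnum{1}), and with (\rmnum{2}) it follows that $\mu^{\mathbf{n}}$ is a random measure.

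The only delicate point is the limiting argument. Weak convergence does not give convergence of $\mu^{\mathbf{n}}_k(\bw)(B)$ for arbitrary $B$, which is why the proof passes through continuous test functions and a monotone class argument rather than taking limits of set values directly. Everything else is routine.
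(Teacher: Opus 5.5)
Your proof is correct, but it gets measurability on open sets by a different route from the paper. The Dynkin step is the same in both.

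\textbf{The paper's route.} It first proves Lemma~\ref{lem_phicts}: the map $\phi:\bw\mapsto\mu^{\mathbf{n}}(\bw)$ is continuous from $(\Omega,d)$ into $(\mathcal{P}(\mathbb{R}),\mathcal{T}_w)$. This uses the bounded Lipschitz metric, the factorisation $\mu^{\mathbf{n}}(\bw)=\mu^{\mathbf{n}}_k(\bw)*\mu^{\mathbf{n}}_{>k}(\bw)$, and the uniform tail estimate $\operatorname{supp}\mu^{\mathbf{n}}_{>k}(\bw)\subseteq[-2^{-k}C,2^{-k}C]$. It then writes $\mu^{\mathbf{n}}(\bw,U)=\pi_U\circ\phi(\bw)$, where $\pi_U(\eta)=\eta(U)$ is lower semicontinuous by the portmanteau theorem and hence Borel.

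\textbf{Your route.} You never use any regularity of $\bw\mapsto\mu^{\mathbf{n}}(\bw)$. You rely only on three facts:
\begin{enumerate}
\item each finite stage $\mu^{\mathbf{n}}_k(\bw)$ is constant on length-$k$ cylinders;
\item $\int f\,\mathrm{d}\mu^{\mathbf{n}}(\bw)$ is a pointwise limit in $\bw$ for $f\in C_b(\mathbb{R})$;
\item $\mu^{\mathbf{n}}(\bw)(U)$ is recovered from below by monotone convergence.
\end{enumerate}

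\textbf{Comparison.} Your argument is more elementary and more robust. It needs no uniform control of the tails $\mu^{\mathbf{n}}_{>k}(\bw)$, only pointwise weak convergence and measurability of the finite stages. So it would survive settings where $\phi$ is merely Borel rather than continuous. The paper's route costs more, but it yields the stronger fact that $\phi$ is continuous. That fact is reused later, for instance to pass from $\sigma^{m_j}(\bw)\to\bu$ to weak convergence of $\nu_{>m_j}(\bw)$ to $\mu(\bu)$ in Proposition~\ref{pro_finite_equi-measure_bound}, Corollary~\ref{cor_finite_1} and the proof of Theorem~\ref{thm_finite_random_Bernoulli}. For the paper, measurability therefore comes almost for free once continuity is in hand.
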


In this paper, we study analytic properties of these random measures.  
Under non‑degeneracy conditions we prove that $\mu^{\mathbf{n}}$ 
is not only a spectral measure, but also admits uncountably many spectra 
(Theorem~\ref{thm_finite_random}).  For the special case of consecutive 
digit sets we completely characterise the positive integer spectral 
eigenvalues (Theorem~\ref{thm_finite_consecutive}).

\subsection{Spectral measures and spectral eigenvalues}
A Borel probability measure $\mu$ on $\R$ is called a \emph{spectral measure} if there exists a countable subset $\Lambda \sse \R$ such that the family of exponential functions
$$
\set{e_\lambda(x) = e^{-2\pi i \lambda\cdot x}: \lambda \in \Lambda}
$$
forms an orthonormal basis in $L^2(\mu)$, and we call the set $\Lambda$   a \emph{spectrum} of $\mu$ and call $(\mu, \Lambda)$ a {\em spectral pair}.
The existence of spectrum of measures is a fundamental question in harmonic analysis, which was first studied by Fuglede~\cite{Fuglede-1974} for the normalised Lebesgue measure on measurable sets. In particular,  Jorgensen and Pedersen~\cite{Jorgensen-Pedersen-1998}  found that some self-similar measures may have spectra, which are a special class of infinite convolutions.

Admissible pairs are the key to study the spectrality of infinite convolutions. Given an integer $N\ge2$ and a finite subset  $B\sse \Z$ with $\# B\ge 2$.  If there exists $L\sse \Z$ such that the matrix
$$
\left[ \frac{1}{\sqrt{\# B}} e^{-2 \pi i  \frac{b\cdot l}{N}}  \right]_{b \in B, l \in L} 
$$
is unitary, we call $(N, B)$ an {\it admissible pair} in $\R$ and call $(N,B,L)$ a {\it Hadamard triple} in $\R$. This implies $\# B=\# L$, and $(N,B,L)$ being a Hadamard triple is equivalent to $L$ being a spectrum of $\delta_{N^{-1}B}$. In \cite{Laba-Wang-2002}, {\L}aba and Wang proved that self-similar measures generated by admissible pairs  with equal weights in $\R$ are  spectral measures, and they found  a special example, both $\Lambda$ and $2\Lambda$ are spectra of a singular continuous spectral measure.  

A  real $t\neq 0$ is called a {\it spectral eigenvalue} of $\mu$ if there exists $\Lambda\subseteq\R$ such that both $\Lambda$ and $t\Lambda$ are spectra of $\mu$. Similarly, given an admissible pair $(N,B)$ associated with the spectral measure $\delta_{N^{-1}B}$,  we say that $t\in\mathbb{R}$ is an \textit{admissible eigenvalue} of $(N,B)$ provided that whenever $(N,B,L)$ forms a Hadamard  triple for some $L\subset\mathbb{Z}$, $(N,B,tL)$ is also a Hadamard triple. The spectral eigenvalue problem has been extensively studied for self-similar measures (see, e.g., \cite{Dai-2016, He-Tang-Wu-2019, Li-Wu-2022, Kong-Li-Wang-2026, Lu-2026}) and significant progress has also been made for self-affine measures in higher dimensions \cite{An-Dong-He-2022, Chen-Liu-2023, Liu-2024, Liu-Tang-Wu-2023}. Fu and He \cite{Fu-He-2018} completely characterised the spectral eigenvalues of infinite convolutions where $\# B_k\equiv2$ or $3$ and the contraction ratio remains constant. Chen, Liu and Liu \cite{Chen-Liu-Liu-2026} obtained a necessary condition for $t$ to be a spectral eigenvalue of the infinite convolution where $B_k$ is a consecutive digit set,  i.e. $B_k = \{0,1,\dots,b_k-1\}$ for all $k\in\N_+$. Moreover, under certain restrictions, they further obtained the corresponding necessary and sufficient condition.

Next, we present a sufficient condition ensuring that an infinite convolution  is a spectral measure and  possesses  spectral eigenvalues. Note that the $t$-equi-positive measure is defined by Definition~\ref{def_equi_measure}.
\begin{theorem}\label{thm_A-spectral eigen set}
Let  $\mu$ be the  infinite convolution of  a sequence  $\{(N_k,B_k)\}_{k=1}^\infty$ of admissible pairs in $\mathbb{R}$.
Let $\mathcal{E}$ be the set of all positive integers $t$ such that $t$ is an admissible eigenvalue of $(N_k,B_k)$ for all $k\in\mathbb{N}_+$ and $\mu$ is a $t$-equi-positive measure.
\begin{enumerate}
\item If $\mathcal{E}\neq\emptyset$, then $\mu$ is a spectral measure.
\item If there exists $t\in\mathcal{E}$ with $t>1$, then there exist uncountably many subsets $\Lambda\subset\mathbb{Z}$ such that $t\Lambda$ is a spectrum of $\mu$ for every $t\in\mathcal{E}$. Moreover, each element of $\mathcal{E}$ is a spectral eigenvalue of $\mu$.
\end{enumerate}
\end{theorem}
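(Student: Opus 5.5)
We do not yet have Definition~\ref{def_equi_measure}. Our working guess is that $\mu$ is \emph{$t$-equi-positive} if there exist $\varepsilon>0$, $\delta>0$ and a subsequence $\{n_j\}$ such that for every $j$ and every $\xi\in[-1/2,1/2]$ there is an integer $k_{j,\xi}$ with
\[
|\widehat{\nu}_{>n_j}(\xi+t k_{j,\xi})|\ge\varepsilon ,
\]
with $k_{j,0}=0$ and a uniform bound on the $k$'s. This is the analogue, for the dilated spectra $t\Lambda$, of the equi-positive families used for infinite convolutions. The argument then follows the standard infinite-convolution scheme: build nested spectra of the discrete measures $\mu_{n_j}$ and pass to the limit via the Jorgensen--Pedersen criterion. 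A spectrum $\Lambda$ of $\mu$ is characterised by
\[
Q_\Lambda(\xi)=\sum_{\lambda\in\Lambda}|\widehat\mu(\xi+\lambda)|^2\equiv 1 .
\]

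\emph{Step 1 (finite level).} For each $k$ choose $L_k\subset\Z$ with $(N_k,B_k,L_k)$ a Hadamard triple and $0\in L_k$. Since $t$ is an admissible eigenvalue of every pair, $(N_k,B_k,tL_k)$ is also Hadamard. We may replace $L_k$ by any translate modulo $N_k$ of its elements, that is, $L_k+N_k\Z$ representatives, and still have a Hadamard triple. Set
\[
\Lambda_n = L_1+N_1L_2+\cdots+N_1\cdots N_{n-1}L_n .
\]
Then $t\Lambda_n$ is a spectrum of $\mu_n$. The freedom is in which representative of each class is chosen; this gives a tree of choices, and different branches give uncountably many $\Lambda$ once there are infinitely many genuine binary choices. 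Such choices exist when $t>1$, because the classes can be shifted by multiples of $N_1\cdots N_n$ times different $k$'s.

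\emph{Step 2 (limit).} Using the equi-positivity, at the stage $n_j$ we choose the representatives so that each new point $\lambda$ of $t\Lambda_{n_{j+1}}$ has
\[
|\widehat\nu_{>n_j}\bigl((\lambda-\lambda')/(N_1\cdots N_{n_j})\bigr)|\ge\varepsilon,
\]
where $\lambda'$ is its ancestor. This is the usual argument of An--He / Li--Miao--Wang type. Let $\Lambda=\bigcup_j\Lambda_{n_j}$. Mutual orthogonality of $\{e_{t\lambda}\}$ in $L^2(\mu)$ follows from the Hadamard structure. Completeness follows from the lower bound:
\[
|\widehat\mu(\xi+t\lambda)|^2=|\widehat\mu_{n_j}(\xi+t\lambda)|^2\,|\widehat\nu_{>n_j}(\cdot)|^2\ge\varepsilon^2|\widehat\mu_{n_j}(\xi+t\lambda)|^2 .
\]
Combined with $\sum_{\lambda\in\Lambda_{n_j}}|\widehat\mu_{n_j}(\xi+t\lambda)|^2=1$ and dominated convergence, this yields $Q_{t\Lambda}\equiv1$. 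Applying this with $t=1\in\mathcal E$ handles the spectrality statement (1). For $t\in\mathcal E$ with $t=1$ not assumed, the construction with that $t$ directly gives a spectrum $t\Lambda$, so $\mu$ is spectral.

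\emph{Step 3 (common $\Lambda$ and eigenvalues).} For part (2) we need one $\Lambda$ working simultaneously for all $t\in\mathcal E$. Since $\mathcal E$ is countable, we enumerate it and, at stage $j$, impose the positivity constraint for the finitely many $t\in\mathcal E$ enumerated so far. For this we take the subsequence for stage $j$ deep enough and choose representatives satisfying each constraint. Each constraint involves the same $\lambda$ scaled by $t$, so we need a common $k$. This is the main obstacle: reconciling the constraints for different $t$ on a single choice of digit. We would handle it by letting the $j$-th block of digits serve the $t$ assigned to $j$ in a diagonal enumeration, with each $t$ appearing infinitely often. We would then check that the completeness argument only needs positivity along a subsequence adapted to that $t$, since $Q\equiv1$ follows from infinitely many good levels. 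Uncountability comes from the free binary choices in the remaining blocks. Then $\Lambda$ (taking $1\in\mathcal E$ if present, otherwise using $t\Lambda$ and $t'\Lambda$) exhibits each $t\in\mathcal E$ as a spectral eigenvalue: if $\Lambda'=t_0\Lambda$ is a spectrum, then $t\Lambda$ being a spectrum gives the required pair via the spectrum $\Lambda$ itself when $1\in\mathcal E$. Since $1$ is trivially an admissible eigenvalue, we expect $1\in\mathcal E$ whenever some $t\in\mathcal E$, which would be checked from the definition.
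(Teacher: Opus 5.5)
Your Steps 1--3 follow the paper's route for spectrality and for the common $\Lambda$. That route uses Hadamard blocks $\mathbf{L}_{m_{j-1},m_j}$ shifted by multiples of $\mathbf{N}_{m_{j-1},m_j}$, and a diagonal enumeration in which each $t\in\mathcal{E}$ owns infinitely many stages. It then uses the bound $f_{\eta_j}\le\epsilon^{-2}f$ with dominated convergence. Finally, $1\in\mathcal{E}$ is immediate, since $t$-equi-positive implies $1$-equi-positive.

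The genuine gap is the uncountability claim in (2). Different choices of representatives need not produce different limit sets, because later stages can refill the shifted residue classes. For example, take Lebesgue measure on $[0,1]$, the infinite convolution of $(2,\{0,1\})$. Every $\nu_{>k}$ is again Lebesgue measure, which is $1$-equi-positive, so your whole tree of representative choices is available. Yet every branch that passes your completeness argument yields a spectrum contained in $\mathbb{Z}$, and the only such spectrum is $\mathbb{Z}$ itself. Here $\mathcal{E}=\{1\}$, so the theorem is not contradicted. But your stated reason (classes can be shifted by multiples of $N_1\cdots N_n$) is available for every $t$, so it cannot be what makes $t>1$ matter.

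The missing idea has two parts.

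First, $t>1$ forces $\#B_k<N_k$ for infinitely many $k$. Otherwise the tail convolution $\mu'$, with $\#B_k=N_k$ at every level, has $\{e_n\}_{n\in\mathbb{Z}}$ orthonormal in $L^2(\mu')$. But the same construction applied to $\mu'$ yields an orthonormal basis indexed by $t\Lambda\subseteq t\mathbb{Z}\subsetneq\mathbb{Z}$, which is impossible.

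Second, those levels are used to vary the \emph{residue sets}, not the representatives. Group levels so that every pair has $\#B<N$, and take $0\in L_k'\subseteq\{0,\dots,N_k-2\}$. Then a composed pair admits both $L_1'+N_1L_2'$ and $L_1''+N_1L_2'$, where $L_1''=(L_1'\setminus\{l\})\cup\{l+N_1\}$. These are different residue sets modulo $N_1N_2$. The equi-positivity shifts $\mathbf{N}_{0,m_j}k_{\lambda,j}$ do not change residues modulo $\mathbf{N}_{0,m_j}$. Moreover, $\Lambda$ has exactly the residues of $\Lambda_j$ modulo $\mathbf{N}_{0,m_j}$. Hence distinct binary sequences give distinct $\Lambda$.

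A smaller point concerns Step 2. For $\lambda=\lambda_1+\mathbf{N}_{0,m_{j-1}}\lambda_2+\mathbf{N}_{0,m_j}k_{\lambda_2,j}$, the lower bound is needed at
\[
\mathbf{N}_{0,m_j}^{-1}(\xi+t\lambda)=\mathbf{N}_{0,m_j}^{-1}(\xi+t\lambda_1)+\mathbf{N}_{m_{j-1},m_j}^{-1}t\lambda_2+tk_{\lambda_2,j}.
\]
It is not needed at $(\lambda-\lambda')/(N_1\cdots N_{n_j})$, which as you wrote it lies in $t\mathbb{Z}$. This is why Definition~\ref{def_equi_measure} asks, for every $x\in[0,t)$, for positivity on a whole neighbourhood $\{x+y+tk:|y|<\delta_0\}$. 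It is also why $m_j$ must be chosen so deep that $|\mathbf{N}_{0,m_j}^{-1}t\lambda_1|<\delta/2$ for all $\lambda_1\in\Lambda_{j-1}$; the $\xi$-term is then absorbed for large $j$. Your guessed definition is pointwise and covers only $\xi\in[-1/2,1/2]$. It would not reach the points $t\lambda_2/\mathbf{N}_{m_{j-1},m_j}\in[0,t)$ when $t\ge2$. With the actual definition, this step goes through as in the paper.
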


If $B_k$ is a consecutive digit set,  i.e. $B_k = \{0,1,\dots,b_k-1\}$ for all $k\in\N_+$,   we obtain a complete characterisation of the positive integer spectral eigenvalues.

\begin{theorem}\label{thm_finite_consecutive}
Given finitely many admissible pairs $\{(N_k, B_k)\}_{k=1}^m$ in $\mathbb{R}$ and a sequence $\bn$ of positive integers. Assume  $B_k = \{0,1,\dots,b_k-1\}$ with $b_k\in\mathbb{N}_+$  for each $1\le k\le m$.
Let $\mu^\mbf{n}$, $\mu$ be given by \eqref{def_random_measure}, \eqref{def_corresponding_P_of_random_measure}, and set
\[
\mathcal{E}_{\bw} = \bigl\{t\in\mathbb{N}_+ : \gcd(t,b_{\omega_k}) = 1 \text{ for all } k\in\mathbb{N}_+\bigr\}.
\] 
Then the following statements hold for every $\bw$ such that $\mu(\bw)$ is non-degenerate:
\begin{enumerate}
\item $\mu^{\mathbf{n}}(\bw)$ is a spectral measure;
\item $\mathcal{E}_{\bw}$ is precisely the set of all positive integer spectral eigenvalues of $\mu^{\mathbf{n}}(\bw)$.
\end{enumerate}
Furthermore, there exist uncountably many subsets $\Lambda_{\bw} \subseteq \mathbb{Z}$ such that $t\Lambda_{\bw}$ is a spectrum of $\mu^{\mathbf{n}}(\bw)$ for every $t\in \mathcal{E}_{\bw}$.
\end{theorem}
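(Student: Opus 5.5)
The plan is to reduce the statement to Theorem~\ref{thm_A-spectral eigen set}, applied to the fixed infinite convolution $\mu^{\mathbf n}(\bw)$, and then prove the converse inclusion in (2) directly.

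\emph{Rewriting as an infinite convolution of admissible pairs.} Fix $\bw$ with $\mu(\bw)$ non-degenerate. Observe that $\mu^{\mathbf n}(\bw)$ is the infinite convolution of $\{(N_{\omega_k}^{n_k},B_{\omega_k})\}_{k\ge1}$.

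\emph{Admissibility.} For a consecutive digit set, $(N,\{0,\dots,b-1\})$ is admissible iff $b\mid N$, with canonical spectrum $L=\frac{N}{b}\{0,\dots,b-1\}$. Since $b_{\omega_k}\mid N_{\omega_k}$, also $b_{\omega_k}\mid N_{\omega_k}^{n_k}$, so every pair $(N_{\omega_k}^{n_k},B_{\omega_k})$ is admissible.

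\emph{Admissible eigenvalues.} I will show that $t$ is an admissible eigenvalue of $(N,\{0,\dots,b-1\})$ with $b\mid N$ iff $\gcd(t,b)=1$.
\begin{itemize}
\item \emph{Sufficiency.} $(N,B,L)$ is a Hadamard triple iff the elements of $L$ are distinct mod $N$ and $L$ projects onto a complete residue system of $\frac{N}{b}\Z$ modulo $N\Z$ after the natural identification. Equivalently, $\{l/N\}$ meets each class of $\frac1b\Z/\Z$ exactly once. If $\gcd(t,b)=1$, multiplication by $t$ permutes $\frac1b\Z/\Z$, which preserves this property.
\item \emph{Necessity.} If $p\mid\gcd(t,b)$, apply this to $L=\frac Nb\{0,\dots,b-1\}$: then $tL$ has repeated classes, so $(N,B,tL)$ fails.
\end{itemize}
Hence $\mathcal{E}_{\bw}$ is exactly the set of $t$ that are admissible eigenvalues of every pair occurring.

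\emph{Equi-positivity, and the main obstacle.} The hard part is showing that $\mu^{\mathbf n}(\bw)$ is $t$-equi-positive for each $t\in\mathcal E_{\bw}$, using non-degeneracy of $\mu(\bw)$. Since Definition~\ref{def_equi_measure} is not reproduced here, I assume it requires a uniform positive lower bound for $|\hat\nu_{>k}|$ on suitable neighbourhoods of $t$-shifted integer points. My approach has three steps.
\begin{enumerate}
\item Show that the tail measures $\nu_{>k}$ of $\mu^{\mathbf n}(\bw)$ are tails of $\mu(\sigma^j\bw)$ with the first factor's contraction enlarged to $N^{n}$. Since $\widehat{\delta_{N^{-n}B}}$ differs from $\widehat{\delta_{N^{-1}B}}$ only through rescaling, the zero sets stay inside $\frac{N}{b}\Z\setminus N\Z$-type sets.
\item Use the standard compactness argument: any weak limit of a subsequence of $\nu_{>k}$ has a Fourier transform that is nonvanishing at integer points in a neighbourhood, unless the limit is Lebesgue measure on $[0,1]$. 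This is the usual An--He / Li--Miao--Wang style argument. Non-degeneracy of $\mu(\bw)$ rules out the Lebesgue limit along the relevant subsequence.
\item Control the zeros of $\hat\delta_{B}$: the zero set $\{\xi:\ \hat\delta_B(\xi)=0\}=\frac1b\Z\setminus\Z$ is unchanged by multiplying integer inputs by a $t$ coprime to $b$. This lets the positivity estimate transfer from $t=1$ to general $t$.
\end{enumerate}
I would first try to verify the equi-positivity for $t=1$ directly from non-degeneracy, then argue that the finitely many distinct tail laws (only $m$ symbols, finitely many $b_k$) make the bounds uniform. Once this holds, Theorem~\ref{thm_A-spectral eigen set} gives (1), the "Furthermore" statement, and the inclusion $\mathcal E_{\bw}\subseteq$ {spectral eigenvalues}.

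\emph{Converse inclusion.} Suppose $t\in\mathbb N_+$ and a prime $p$ divides $\gcd(t,b_{\omega_k})$ for some $k$. Take $\Lambda$ and $t\Lambda$ both spectra. Orthogonality of $\Lambda$ forces $\Lambda-\Lambda\subset\mathcal Z(\hat\mu)\cup\{0\}$, where $\mathcal Z(\hat\mu)=\bigcup_j N_{\omega_1}^{n_1}\cdots N_{\omega_j}^{n_j}\bigl(\frac1{b_{\omega_j}}\Z\setminus\Z\bigr)$. I would show $p\Lambda\subset$ a proper coset structure, namely that $t\Lambda$ misses a residue class forced by the factor $\delta_{B_{\omega_k}}$. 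This contradicts completeness, since the restriction of $\mu$ to one level forces $t\Lambda$ to hit all $b_{\omega_k}$ classes modulo the appropriate scale, mirroring the necessity argument for admissible eigenvalues.
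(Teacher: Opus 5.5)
Your skeleton is the paper's: first the admissible eigenvalues of the pairs $(N_{\omega_k}^{n_k},B_{\omega_k})$, which your $\gcd(t,b)=1$ criterion handles correctly up to normalising $0\in L$; then Theorem~\ref{thm_A-spectral eigen set} once $t$-equi-positivity is known; then necessity. The gap is that the step carrying all the weight, $t$-equi-positivity of $\mu^{\mathbf n}(\bw)$ for every $t\in\mathcal E_{\bw}$, is not proved, and the mechanism you propose does not work.

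\begin{itemize}
\item $t$-equi-positivity needs, for each $x\in[0,t)$, a point of the sparse progression $x+t\mathbb Z$ near which $|\hat\nu_{>k}|\ge\epsilon_0$. The $t=1$ case only supplies a good point somewhere in $x+\mathbb Z$.
\item The invariance of $\frac1b\mathbb Z\setminus\mathbb Z$ under multiplication by $t$ concerns dilation, not translation by $t\mathbb Z$. It is what makes $t\Lambda$ orthogonal, and it gives no lower bound along $x+t\mathbb Z$.
\item Non-degeneracy alone does not rescue this. For $N=4$, $B=\{0,1\}$, the singular measure $\mu=\delta_{4^{-1}B}*\delta_{4^{-2}B}*\cdots$ has $\hat\mu(2+4k)=0$ for all $k$, since the first factor vanishes at $\tfrac12+k$. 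Hence $2\in\mathcal Z_4(\mu)$, and any valid argument must use $\gcd(t,b)=1$ in an essential, arithmetic way.
\item Uniformity cannot come from ``finitely many distinct tail laws''. Along a general $\bw$ the tails $\nu_{>k}$ are infinitely many different measures. Uniformity has to come from tightness and compactness, i.e.\ Theorem~\ref{thm_admissible-equi-positive}, which needs $\mathcal Z_t=\emptyset$ for every measure in the weak closure of the chosen tails.
\end{itemize}

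The missing idea is a proof that $\mathcal Z_t(\mu(\bu))=\emptyset$ whenever $b_{\alpha_k}<N_{\alpha_k}$ for infinitely many $k$ and $\gcd(t,b_{\alpha_k})=1$ for all $k$. Proposition~\ref{pro_finite_zero set} does not cover this, because $t$ may share prime factors with $N_k/b_k$. The paper proves it as follows:
\begin{itemize}
\item It iterates periodic zeros through the sets $X_n$ of Proposition~\ref{pro_finite_zero set}.
\item It uses Lemma~\ref{lem_zero-set-local-finite} to make $\#X_n$ stabilise, and observes that the surviving zero is an integer.
\item It then combines $\gcd(t,b)=1$ with the explicit zero set of Lemma~\ref{lem_zsc} to reach a contradiction at levels with $b_{\omega_n}<N_{\omega_n}$.
\end{itemize}
It transfers this to $\mu^{\mathbf n}(\bw)$ separately in two cases. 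For bounded $\mathbf n$ it uses $|\widehat{\mu(\boldsymbol{\eta}^{(m_j)})}|\le|\widehat{\nu_{>m_j}(\bw)^{\mathbf n}}|$ (Proposition~\ref{pro_finite_equi-measure_bound}); for unbounded $\mathbf n$ it uses a direct estimate (Proposition~\ref{pro_finite_equi-measure_unbound}). Your step~1 only gestures at the bounded case.

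Your converse is likewise only a sketch. A subset of $p\mathbb Z$ can be a spectrum at a single level, e.g.\ $\{0,2\}$ for $\delta_{4^{-1}\{0,1\}}$, so the obstruction is not a one-level residue count. The paper does not prove this direction itself; it invokes Theorem~\ref{thm_consecutive} (Chen--Liu--Liu), which for integer $t$ gives $\gcd(t,b_{\omega_k})=1$ directly.
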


\subsection{Spectrality properties of random measures}
In this subsection, we extend the notions of spectrality and spectral eigenvalues from deterministic measures to random measures.

Let  $M$ be a random measure on  $(\mathbb{R},\mathcal{B}(\mathbb{R}))$ with respect to  $(\Omega, \mathcal{F})$. We say $M$ is a \emph{spectral random measure} if $M(\bw)$ is a spectral measure for all $\bw\in\Omega$. And we call $t$ a spectral eigenvalue of $M$ if $t$ is a spectral eigenvalue of $M(\bw)$ for all $\bw\in\Omega$.

Moreover, given $\mathbb{P}\in\mathcal{P}(\Omega)$, we say $M$ is \emph{a spectral random measure for $\mathbb{P}$-a.e. $\omega\in \Omega$  (or  the random measure $M$ is spectral $\mathbb{P}$-a.e.)} if $M(\bw)$ is a spectral measure for $\mathbb{P}$-a.e. $\bw\in\Omega$. $M$ is said to be {\it singular} $\mathbb{P}$-a.e., if $M(\bw)$ is singular to the Lebesgue measure for $\mathbb{P}$-a.e. $\bw\in\Omega$. 
And we call $t$ \emph{a spectral eigenvalue of $M$ $\mathbb{P}$-a.e} if $t$ is a spectral eigenvalue of $M(\bw)$ for $\mathbb{P}$-a.e. $\bw\in\Omega$.

\begin{definition}
	Let $\mathcal{E}$ be a subset of $\mathbb{R}$ containing $1$ and $M$ be a random measure on  $(\mathbb{R},\mathcal{B}(\mathbb{R}))$ with respect to  $(\Omega, \mathcal{F})$. We say $\mathcal{E}$ is an $\aleph_1$-spectral eigen set of $M$, if for all $\bw\in\Omega$, there exist uncountably many subsets $\Lambda_{\bw}\subset\R$ such that $t\Lambda_{\bw}$ is a spectrum of $M(\bw)$ for all $t\in \mathcal{E}$.
	
	Moreover, given $\mathbb{P} \in \mathcal{P}(\Omega)$, we say  $\mathcal{E}$ is an $\aleph_1$-spectral eigen set of $M$ $\mathbb{P}$-a.e. if the above condition holds for $\mathbb{P}$-a.e. $\bw\in\Omega$.
\end{definition}

Given  a sequence $\bn$ of positive integers and finitely many admissible pairs $\{(N_k, B_k)\}_{k=1}^m$ in $\mathbb{R}$ where  $N_k\ge2$ and $B_k\subset \R $ is finite.    Write 
\begin{equation}\label{def_em}
\mathcal{E}_m=\{t\in\N_+|\gcd(t,N_k)=1,\ \text{for all} \ 1 \leq k \leq m\}.
\end{equation}
By Theorem \ref{thm_measurable},  $\mu^\mathbf{n}$ and $\mu$ given by \eqref{def_random_measure} and \eqref{def_corresponding_P_of_random_measure} are random measures. Moreover, we show that they are also spectral random measures and that $\mathcal{E}_m$ is an $\aleph_1$-spectral eigen set of $\mu^\mathbf{n}$.

\begin{theorem}\label{thm_finite_random}
Given   $\mathbb{P}\in\mathcal{P}(\Omega)$. Let $\mu^\mathbf{n}$, $\mu$ be given by \eqref{def_random_measure}, \eqref{def_corresponding_P_of_random_measure} and $\mathcal{E}_m$ be given by \eqref{def_em}.  If $\mu$ is non-degenerate $\mathbb{P}$-a.e., then $\mu^\mathbf{n}$ is a spectral random measure $\mathbb{P}$-a.e., and $\mathcal{E}_m$ is an $\aleph_1$-spectral eigen set of $\mu^\mathbf{n}$ $\mathbb{P}$-a.e.
\end{theorem}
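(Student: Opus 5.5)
Fix $\bw$ in the full-measure set where $\mu(\bw)$ is non-degenerate. We prove the deterministic statement for this $\bw$, since the $\mathbb{P}$-a.e. conclusion then follows at once. The plan is to rewrite $\mu^{\bn}(\bw)$ as an infinite convolution of admissible pairs and apply Theorem~\ref{thm_A-spectral eigen set}. Set $N'_k=N_{\omega_k}^{n_k}$ and $B'_k=N_{\omega_k}^{n_k-1}B_{\omega_k}$. Then $N_{\omega_1}^{-n_1}\cdots N_{\omega_k}^{-n_k}B_{\omega_k}=(N'_1\cdots N'_k)^{-1}B'_k$, so $\mu^{\bn}(\bw)$ is the infinite convolution of $\{(N'_k,B'_k)\}$.

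\textbf{Admissibility and eigenvalues.} If $(N,B,L)$ is a Hadamard triple, then $(N^n,N^{n-1}B,L)$ is again one, because $N^{n-1}b\,l/N^n=bl/N$. So each $(N'_k,B'_k)$ is admissible. Next, let $t\in\mathcal{E}_m$ and let $(N^n,N^{n-1}B,L)$ be any Hadamard triple. The mutual orthogonality conditions say that the zero set of the mask $m_B$ contains $(l-l')/N$ for distinct $l,l'\in L$. This zero set is $\mathbb{Z}$-periodic and invariant under multiplication by $t$ modulo $1$ when $\gcd(t,N)=1$. The reason is that $t$ permutes the residues $\mathbb{Z}/N$ (more precisely $\mathbb{Z}/N^n$), and the zeros of $m_B$ at such rationals depend only on the order of the denominator via cyclotomic divisibility of $B(x)$. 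Hence $tL$ is again a spectrum for $\delta_{N^{-n}N^{n-1}B}$. We will verify this through the standard fact that, for $\gcd(t,N)=1$, $m_B(j/N^s)=0$ iff $m_B(tj/N^s)=0$, since $\Phi_{d}$-factors are Galois invariant. Thus every $t\in\mathcal{E}_m$ is an admissible eigenvalue of every $(N'_k,B'_k)$.

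\textbf{$t$-equi-positivity.} This is the main obstacle. We must show that $\mu^{\bn}(\bw)$ is $t$-equi-positive for all $t\in\mathcal{E}_m$, using only the non-degeneracy of $\mu(\bw)$. The tail measures $\nu'_{>k}$ of the new sequence are images of tails of $\mu(\sigma^j\bw)$-type convolutions with some digits padded by zeros (the factors $N^{n-1}$). Since only finitely many pairs occur, the family of all such tails is relatively compact. We will therefore pass to the subsequence $\nu_{>n_j}$ along which $\mu(\bw)$'s tails converge to a singular measure $\nu$, and show that the corresponding tails of $\mu^{\bn}(\bw)$ converge to a measure whose Fourier transform has no integer zeros in the relevant sense. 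The argument is that the zeros of $\widehat{\nu}$ lie in a union of sets $N^{s}(\text{zeros of } m_{B})$, and a singular limit, together with the finiteness of the alphabet, forbids a full lattice of zeros along $t\mathbb{Z}$. This should follow from the Definition~\ref{def_equi_measure} criterion. What we would try first is the known equi-positivity argument of An--Fu--Lai type: any weak limit $\nu$ of tails satisfies $\widehat{\nu}(\xi+k)\neq0$ for some $k$, uniformly on compacts. Non-singularity would be forced if $\widehat\nu$ vanished on all of $t\xi+t\mathbb{Z}$, by periodicity or absolute-continuity arguments; non-degeneracy rules this out. The modification by $n_k$ only inserts extra scales $N^{n_k-1}$ that do not change the zero structure modulo coprime $t$.

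\textbf{Conclusion.} With $\mathcal{E}_m\subset\mathcal{E}$, part (1) of Theorem~\ref{thm_A-spectral eigen set} gives spectrality, since $1\in\mathcal{E}_m$. If some $t>1$ lies in $\mathcal{E}_m$, part (2) gives uncountably many $\Lambda_{\bw}\subset\mathbb{Z}$ with $t\Lambda_{\bw}$ a spectrum for every $t\in\mathcal{E}$, hence for every $t\in\mathcal{E}_m$. If $\mathcal{E}_m=\{1\}$, which is impossible since $\mathcal{E}_m$ contains every prime larger than all $N_k$, we would need a separate argument; in fact this case never arises. This yields that $\mathcal{E}_m$ is an $\aleph_1$-spectral eigen set at each such $\bw$, and hence $\mathbb{P}$-a.e.
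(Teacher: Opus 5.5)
Your overall architecture matches the paper's. You regard $\mu^{\bn}(\bw)$ as an infinite convolution of admissible pairs, obtain admissible eigenvalues from $\gcd(t,N_k)=1$, prove $t$-equi-positivity for every $t\in\mathcal{E}_m$, and apply Theorem~\ref{thm_A-spectral eigen set}. Your cyclotomic argument is a valid proof of Lemma~\ref{lem_gcd-Hadamard} for the pairs $(N^{n},N^{n-1}B)$.

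\textbf{Gap: the zero-set step.} The $t$-equi-positivity step is the real content of the theorem, and you only gesture at it. You reduce it to the claim that a singular weak limit $\nu$ of tails cannot have $\widehat{\nu}$ vanishing on a whole coset $\xi+t\mathbb{Z}$, justified by ``periodicity or absolute-continuity arguments''. No such general principle exists, even for infinite convolutions of admissible pairs. The measure generated by $(4,\{0,2\})$ is singular, yet its Fourier transform $\prod_{k\ge1}\frac12(1+e^{-4\pi i\xi/4^k})$ vanishes on all of $1+2\mathbb{Z}$, so $1\in\mathcal{Z}_2$. The conclusion $\mathcal{Z}_t(\mu(\bw))=\emptyset$ holds only because of the specific structure: a finite alphabet, $\gcd(t,N_k)=1$, and infinitely many $k$ with $\#B_{\omega_k}<N_{\omega_k}$. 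Your sketch never says how coprimality enters.

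This is Proposition~\ref{pro_finite_zero set}, and it needs a genuine argument:
\begin{enumerate}
\item From $\xi_0\in\mathcal{Z}_t(\mu(\bw))$, the maps $\psi_{\ell,j}(x)=N_j^{-1}(x+t\ell)$ generate sets $X_n\subseteq\mathcal{Z}_t(\mu(\sigma^n\bw))$.
\item Because $(N_k,B_k,tL_k)$ is a Hadamard triple, each point has at least one surviving child, so $\#X_n$ is non-decreasing.
\item By local finiteness of $\bigcup_{\bw}\mathcal{O}(\widehat{\mu(\bw)})$ (Lemma~\ref{lem_zero-set-local-finite}, which uses the finite alphabet), $\#X_n$ is bounded, so eventually every point has exactly one child.
\item At a level with $\#B_{\omega_n}<N_{\omega_n}$, this forces $M_{B_{\omega_n}}(tj/N_{\omega_n})=0$ for all $j\not\equiv 0 \pmod{N_{\omega_n}}$.
\item Coprimality then gives $N_{\omega_n}$ orthogonal exponentials in the $\#B_{\omega_n}$-dimensional space $L^2(\delta_{N_{\omega_n}^{-1}B_{\omega_n}})$, a contradiction.
\end{enumerate}
Only after this does Theorem~\ref{thm_admissible-equi-positive} (tightness plus equicontinuity) convert ``no periodic zeros in the closure'' into the uniform constants $\epsilon_0,\delta_0$.

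\textbf{Gap: the passage to $\mu^{\bn}(\bw)$.} The passage from $\mu(\bw)$ to $\mu^{\bn}(\bw)$ is also only asserted (``extra scales do not change the zero structure''). Your remark that only finitely many pairs occur is false for the rewritten pairs $(N_{\omega_k}^{n_k},N_{\omega_k}^{n_k-1}B_{\omega_k})$ when $\bn$ is unbounded. So the finite-alphabet zero-set argument cannot be run on them directly. The paper splits into two cases.
\begin{itemize}
\item For bounded $\bn$, expand each block $\delta_{N^{-n}B}$ into $\delta_{N^{-1}B}*\cdots*\delta_{N^{-n}B}$. This yields a standard finite-alphabet convolution with $\mu(\boldsymbol{\eta}^{(m_j)})=\nu_{>m_j}(\bw)^{\bn}*\rho_{m_j}$, hence $|\widehat{\mu(\boldsymbol{\eta}^{(m_j)})}|\le|\widehat{\nu_{>m_j}(\bw)^{\bn}}|$. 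Equi-positivity of the expanded singular tails therefore transfers to the tails of $\mu^{\bn}(\bw)$; inserting factors can only add zeros, which is the direction you need.
\item For unbounded $\bn$, take tails beginning at blocks with $n_k$ so large that $|\widehat{\nu_{>k}(\bw)^{\bn}}|\ge\prod_{s\ge0}\cos(\pi/2^{s+2})>0$ on $[-1,t+1]$.
\end{itemize}
Without these ingredients, the hypothesis of Theorem~\ref{thm_A-spectral eigen set} is not established.
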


If $\# B_k< N_k$ for all $  k $, it is obvious that $\mu(\bw)$ is non-degenerate for all $\bw\in\Omega$.

\begin{corollary}\label{cor_finite_random_1}
Let $\mu^\mathbf{n}$, $\mu$ be given by \eqref{def_random_measure}, \eqref{def_corresponding_P_of_random_measure} and $\mathcal{E}_m$ be given by \eqref{def_em}.  Suppose that $\# B_k< N_k$ for $1 \leq k \leq m$.  
Then $\mu^\mathbf{n}$ is a spectral random measure, and $\mathcal{E}_m$ is an $\aleph_1$-spectral eigen set of $\mu^\mathbf{n}$.
\end{corollary}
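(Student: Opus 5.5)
The corollary follows from Theorem~\ref{thm_finite_random} once two things are checked. First, $\mu(\bw)$ must be non-degenerate for \emph{every} $\bw\in\Omega$. Second, we must upgrade the $\mathbb{P}$-a.e.\ conclusion to an everywhere conclusion.

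For the upgrade, the cleanest route is to apply Theorem~\ref{thm_finite_random} with $\mathbb{P}=\delta_{\bw}$, the Dirac mass at an arbitrary fixed $\bw\in\Omega$. This is a Borel probability measure on $\Omega$. If $\mu(\bw')$ is non-degenerate for all $\bw'$, then it is in particular non-degenerate $\delta_{\bw}$-a.e. The theorem then gives that $\mu^{\mathbf n}(\bw)$ is spectral. It also gives uncountably many $\Lambda_{\bw}$ with $t\Lambda_{\bw}$ a spectrum of $\mu^{\mathbf n}(\bw)$ for all $t\in\mathcal{E}_m$, since the only full-measure sets for $\delta_{\bw}$ are those containing $\bw$. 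Letting $\bw$ range over $\Omega$ gives exactly the definitions of a spectral random measure and of an $\aleph_1$-spectral eigen set.

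The real content is therefore the non-degeneracy claim. Fix $\bw$ and consider the tail measures
\[
\nu_{>k}(\bw)=\delta_{N_{\omega_{k+1}}^{-1}B_{\omega_{k+1}}}*\delta_{(N_{\omega_{k+1}}N_{\omega_{k+2}})^{-1}B_{\omega_{k+2}}}*\cdots .
\]
Since there are only finitely many pairs, all of these are supported in a common compact interval $[-C,C]$, with $C=\max_k\max|B_k|/(\min_k N_k-1)$. By Prokhorov's theorem some subsequence $\nu_{>k_j}(\bw)$ converges weakly to a probability measure $\nu$. Because $\Omega$ is compact, we may pass to a further subsequence so that the shifted sequences $\sigma^{k_j}\bw$ converge to some $\bv\in\Omega$. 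The infinite convolution is continuous in the coding with respect to weak convergence: the first $\ell$ factors agree, and the tail is uniformly small, of order $(\min N_k)^{-\ell}$ in diameter. Hence $\nu=\mu(\bv)$, the infinite convolution for $\bv$.

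It then remains to show that $\mu(\bv)$ is singular for every $\bv$ when $\#B_k<N_k$. Here I would use a covering argument. At level $\ell$, $\mu(\bv)$ is supported in the union of at most $\prod_{i\le\ell}\#B_{v_i}$ translates of an interval of length $2C\prod_{i\le\ell}N_{v_i}^{-1}$. The total length of this union is at most
\[
2C\prod_{i\le\ell}\frac{\#B_{v_i}}{N_{v_i}}\le 2C\,\rho^\ell,
\qquad \rho=\max_k\frac{\#B_k}{N_k}<1 .
\]
So $\mu(\bv)$ is carried by a compact set of Lebesgue measure zero and is singular. I expect this step to be the main, though modest, obstacle. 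The care lies in making the support containment precise, namely that $\mu(\bv)$ equals the level-$\ell$ discrete measure convolved with a rescaled tail supported in $[-C,C]\cdot\prod N_{v_i}^{-1}$. Combining the pieces, $\mu(\bw)$ is non-degenerate for every $\bw$, and the corollary follows.
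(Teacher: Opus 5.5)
Your proof is correct and follows essentially the paper's route (the proof of Corollary~\ref{cor_finite_1}). Compactness of $\Omega$ gives a limit point of the shifts $\sigma^{k_j}\bw$, continuity of the coding map (Lemma~\ref{lem_phicts}) identifies the limit of the tails $\nu_{>k_j}(\bw)$ as the infinite convolution of that limit word, and that limit is singular, so $\mu(\bw)$ is non-degenerate for every $\bw$ and the main theorem applies pointwise. The differences are only cosmetic: you check singularity by the direct covering bound $2C\rho^{\ell}$ with $\rho=\max_k \#B_k/N_k<1$ instead of citing Lemma~\ref{lem_singular} (whose proof rests on the same covering estimate), you get the everywhere statement from Theorem~\ref{thm_finite_random} via $\mathbb{P}=\delta_{\bw}$ rather than quoting the pointwise Theorem~\ref{thm_finite}, and your Prokhorov step is redundant once $\sigma^{k_j}\bw$ converges.
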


In the consecutive digit case, Corollary~\ref{cor_finite_random_1} remains valid with the larger se	t $\mathcal{E}'_m$ defined below.
\begin{corollary}\label{cor_finite_random_2}
Assume  $B_k = \{0,1,\dots,b_k-1\}$ with $b_k\in\mathbb{N}_+$  for each $1\le k\le m$.
Let $\mu^\mbf{n}$, $\mu$ be given by \eqref{def_random_measure}, \eqref{def_corresponding_P_of_random_measure}, and set
\[
\mathcal{E}'_{m} = \bigl\{t\in\mathbb{N}_+ : \gcd(t,b_{k}) = 1 \text{ for all } 1\le k\le m\bigr\}.
\] 	
If $\mu$ is non-degenerate $\mathbb{P}$-a.e., then $\mu^\mathbf{n}$ is a spectral random measure, and $\mathcal{E}'_m$ is an $\aleph_1$-spectral eigen set of $\mu^\mathbf{n}$ $\mathbb{P}$-a.e.
\end{corollary}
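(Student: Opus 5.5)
Corollary~\ref{cor_finite_random_2} follows from Theorem~\ref{thm_finite_consecutive}, applied pointwise in $\bw$, together with a monotonicity observation about the eigen sets. Let $\Omega_0$ be the set of $\bw$ for which $\mu(\bw)$ is non-degenerate. By hypothesis $\mathbb{P}(\Omega_0)=1$, or at least $\Omega_0$ contains a set of full $\mathbb{P}$-measure. It then suffices to prove, for every fixed $\bw\in\Omega_0$:
\begin{enumerate}
\item $\mu^{\bn}(\bw)$ is spectral;
\item there are uncountably many $\Lambda_{\bw}\subseteq\Z$ such that $t\Lambda_{\bw}$ is a spectrum of $\mu^{\bn}(\bw)$ for every $t\in\mathcal{E}'_m$.
\end{enumerate}
The random-measure property of $\mu^{\bn}$ is already given by Theorem~\ref{thm_measurable}. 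No measurability of $\Omega_0$ beyond what the ``$\mathbb{P}$-a.e.'' phrasing requires is needed, since the definitions only ask that the property hold off a $\mathbb{P}$-null set.

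Fix $\bw\in\Omega_0$. Statement (1) of Theorem~\ref{thm_finite_consecutive} gives spectrality of $\mu^{\bn}(\bw)$ immediately. For the eigen set, I compare $\mathcal{E}'_m$ with $\mathcal{E}_{\bw}=\{t\in\N_+:\gcd(t,b_{\omega_k})=1\ \forall k\}$. Each $\omega_k$ lies in $\{1,\dots,m\}$, so every condition $\gcd(t,b_{\omega_k})=1$ already appears among the conditions defining $\mathcal{E}'_m$. Hence $\mathcal{E}'_m\subseteq\mathcal{E}_{\bw}$; equality holds exactly when every letter occurs in $\bw$, but the inclusion is all we need. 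Also $1\in\mathcal{E}'_m$, as the definition of an $\aleph_1$-spectral eigen set requires.

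The last assertion of Theorem~\ref{thm_finite_consecutive} supplies uncountably many $\Lambda_{\bw}\subseteq\Z$ such that $t\Lambda_{\bw}$ is a spectrum of $\mu^{\bn}(\bw)$ for all $t\in\mathcal{E}_{\bw}$. Restricting to the subset $\mathcal{E}'_m$ gives the required property for all $t\in\mathcal{E}'_m$. This proves that $\mathcal{E}'_m$ is an $\aleph_1$-spectral eigen set of $\mu^{\bn}$ $\mathbb{P}$-a.e.

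The only real content is therefore Theorem~\ref{thm_finite_consecutive}, which I take as given. The one point to check is that its hypotheses are met pointwise: the pairs $(N_k,\{0,\dots,b_k-1\})$ must be admissible, which the statement assumes, and $\mu(\bw)$ must be non-degenerate, which holds on $\Omega_0$. The main potential obstacle is a mismatch of phrasing. The corollary says ``$\mu^{\bn}$ is a spectral random measure'' without ``$\mathbb{P}$-a.e.'', but only the a.e. version follows, since spectrality is established only on $\Omega_0$. I would state the conclusion as holding $\mathbb{P}$-a.e., noting that it holds for every $\bw$ when $\mu$ is non-degenerate everywhere, for instance when $b_k<N_k$ for all $k$.
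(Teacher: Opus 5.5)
Your proposal is correct and matches the paper, which derives the corollary directly from Theorem~\ref{thm_finite_consecutive}; you also spell out the one implicit step, namely $\mathcal{E}'_m\subseteq\mathcal{E}_{\bw}$ for every $\bw$ because each $\omega_k\in\{1,\dots,m\}$. Reading the spectrality conclusion as holding $\mathbb{P}$-a.e.\ is the right interpretation, consistent with the phrasing of Theorem~\ref{thm_finite_random}.
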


Given a probability vector $\mathbf{p}=(p_{1}, p_{2}, \cdots,p_m)$, i.e., $\sum_{j=1}^{m} p_j=1$ where $p_j\geq 0$ for all $1\le j\le m$,  we define a probability measure $\mathbb{P}$ on $\Omega$ by setting
\begin{equation} \label{def_BPM}
	\mathbb{P}([\bu]) =p_{\bu}\equiv p_{\alpha_{1}}p_{\alpha_{2}}\cdots p_{\alpha_{k}}  \quad (\bu=\alpha_1\cdots \alpha_k)
\end{equation}
for each cylinder $[\bu]$ and
extending to general subsets of $\Omega$ in the usual way. The probability measure $\mathbb{P}$ is called the {\it Bernoulli measure} associated with the probability vector $\mathbf{p}$. A probability vector $\mathbf{p}=(p_{1}, p_{2}, \cdots,p_m)$ is called {\it positive} if $p_{j}>0$ for all $1\le j\le m$.

\begin{theorem}\label{thm_finite_random_Bernoulli}
Let $\mu^\mathbf{n}$, $\mu$ be given by \eqref{def_random_measure}, \eqref{def_corresponding_P_of_random_measure} and $\mathcal{E}_m$ be given by \eqref{def_em}.   Let $\mathbb{P}$ on $\Omega$ be  a Bernoulli probability measure given by \eqref{def_BPM}. 
If $\mu$ is singular to Lebesgue measure $\mathbb{P}$-a.e., then $\mu^\mathbf{n}$ is a spectral random measure $\mathbb{P}$-a.e., and $\mathcal{E}_m$ is an $\aleph_1$-spectral eigen set of $\mu^\mathbf{n}$ $\mathbb{P}$-a.e.
\end{theorem}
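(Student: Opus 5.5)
The plan is to reduce Theorem~\ref{thm_finite_random_Bernoulli} to Theorem~\ref{thm_finite_random}. Concretely, I will show that under a Bernoulli measure $\mathbb{P}$, singularity of $\mu(\bw)$ for $\mathbb{P}$-a.e.\ $\bw$ forces $\mu(\bw)$ to be non-degenerate for $\mathbb{P}$-a.e.\ $\bw$. Theorem~\ref{thm_finite_random} then gives both conclusions at once.

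\textbf{Step 1: identify the tail measures.} For $\mu(\bw)$ with $\bn=\{1\}$, the tail measure in \eqref{def_nu_n} is
\[
\nu_{>k}(\bw)=\delta_{N_{\omega_{k+1}}^{-1}B_{\omega_{k+1}}}*\delta_{(N_{\omega_{k+2}}N_{\omega_{k+1}})^{-1}B_{\omega_{k+2}}}*\cdots=\mu(\sigma^k\bw),
\]
where $\sigma$ is the left shift on $\Omega$.

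\textbf{Step 2: find a good point that the orbit returns to.} Let $S=\{\bw:\mu(\bw)\text{ is singular}\}$. This set is measurable by Theorem~\ref{thm_measurable}, or one can simply work inside a full-measure measurable subset. Since $\mathbb{P}(S)=1$ and $\mathbb{P}$ is $\sigma$-invariant, the set $S'=\bigcap_{k\ge0}\sigma^{-k}S$ also has full measure. Remove the symbols with $p_j=0$, so we may assume $\mathbf{p}$ is positive on the alphabet actually used. Then $\mathbb{P}$ charges every cylinder, so $S$ is dense in the support, and we can fix one $\bv\in S$.

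The key step uses ergodicity of the Bernoulli shift (Birkhoff), or more simply Borel--Cantelli applied to independent blocks. For each $j$, $\mathbb{P}$-a.e.\ $\bw$ has infinitely many $k$ with $\sigma^k\bw\in[v_1\cdots v_j]$. A diagonal choice then gives $k_j\to\infty$ with $\sigma^{k_j}\bw\to\bv$ in $\Omega$.

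\textbf{Step 3: continuity in $\bw$.} I will show that $\bw\mapsto\mu(\bw)$ is weakly continuous on $\Omega$. The supports lie in a fixed compact set, because the scaling factors are at most $2^{-k}$ and the digit sets are finite. Also, $\mu_k(\bw)$ depends only on $\omega_1\cdots\omega_k$, and the tail is uniformly small, so $\mu_k(\bw)\to\mu(\bw)$ uniformly in $\bw$ in, say, the Lévy--Prokhorov metric. Hence $\nu_{>k_j}(\bw)=\mu(\sigma^{k_j}\bw)\to\mu(\bv)$ weakly, and $\mu(\bv)$ is singular. So $\mu(\bw)$ is non-degenerate for $\mathbb{P}$-a.e.\ $\bw$.

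\textbf{Step 4: conclude.} Apply Theorem~\ref{thm_finite_random}.

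\textbf{Main obstacle.} The delicate point is Step 2. The zero-probability symbols must be handled carefully: $\mathbb{P}$ is then supported on the subshift over the symbols with $p_j>0$, and I will carry out the argument there. It is also important to use a single fixed limit point $\bv$, rather than requiring $\sigma^k\bw$ to stay in $S$, since the definition of non-degeneracy only asks for a singular weak limit along some subsequence. Measurability of $S$ can be avoided entirely, because only the existence of one $\bv\in S$ in the support is needed.
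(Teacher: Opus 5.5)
Your proposal is correct and follows the paper's route: reduce to $\mathbb{P}$-a.e.\ non-degeneracy via Theorem~\ref{thm_finite_random}, show that for a.e.\ $\bw$ the orbit $\sigma^{k_j}\bw$ accumulates at a point whose measure is singular, and pass to the limit using continuity of $\bw\mapsto\mu(\bw)$ (Lemma~\ref{lem_phicts}). The only difference is the choice of that point: the paper uses Lemma~\ref{lem_singular} to find a symbol with $\#B_1<N_1$ and $p_1>0$ and targets the constant word $111\cdots$, with recurrence coming from the strong law of large numbers in Lemma~\ref{lem_convergence_of_word}; you instead take an arbitrary $\bv\in S\cap\operatorname{supp}\mathbb{P}$ straight from the hypothesis, which avoids Lemma~\ref{lem_singular} altogether (your remarks on the measurability of $S$ and on the set $S'$ are unnecessary but harmless).
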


The rest of the paper is organised as follows. In Section \ref{sec_preliminaries}, we collect some basic properties of admissible pairs and list several lemmas and theorems used later in the paper. In Section \ref{sec_Random measure}, we prove Theorem \ref{thm_measurable} by establishing the required measurability. In Section \ref{sec_t}, we prove Theorem \ref{thm_A-spectral eigen set} and investigate the spectral eigenvalues of infinite convolutions under  $t$-equi-positivity. Since $t$-equi-positivity is often difficult to verify directly, in Section~\ref{sec_Random spectral measures}, we show that $t$-admissibility implies $t$-equi-positive and then use this implication to prove Theorem~\ref{thm_finite_random} via periodic zero sets. In the final section, we prove Theorem \ref{thm_finite_consecutive}, Corollary \ref{cor_finite_random_1}, Corollary \ref{cor_finite_random_2} and Theorem \ref{thm_finite_random_Bernoulli}.

\section{Preliminaries}\label{sec_preliminaries}
	
First, we list some useful properties of admissible pairs.
	
\begin{lemma}\label{lem_hadamard}\cite{Dutkay-Haussermann-Lai-2019, Laba-Wang-2002}
Suppose that $(N,B,L)$ is a Hadamard triple in $\R$. Then \\	
\noindent	{\rm(i)}$(N,B+b_0,L+l_0)$ is also a Hadamard triple for all $l_0,b_0\in\R$;
		
\noindent	{\rm(ii)} If $B'\equiv B \pmod{N}$ and $L'\equiv L \pmod{N}$, then $(N,B',L')$ is a Hadamard triple;
		
\noindent	{\rm(iii)} Given a finite sequence $\{(N_k,B_k,L_k)\}_{k=1}^n$ of  Hadamard triples in $\R^d$. Write $\mbf N=N_nN_{n-1}\cdots N_1$,
$$
\mbf B=N_nN_{n-1}\cdots N_2 B_1+\cdots+N_nB_{n-1}+B_n,
$$
and
$$
\mbf L=L_1+N_1L_2+\cdots+(N_{n-1}\cdots N_2N_1)L_n.
$$
Then $(\mbf N,\mbf B,\mbf L)$ is a Hadamard triple.
\end{lemma}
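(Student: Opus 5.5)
The approach is to work with the characterisation of Hadamard triples through the vanishing of the mask polynomial $m_B(\xi)=\frac{1}{\#B}\sum_{b\in B}e^{-2\pi i b\xi}$. Namely, $(N,B,L)$ is a Hadamard triple if and only if $\#L=\#B$ and $m_B\bigl((l-l')/N\bigr)=0$ for all distinct $l,l'\in L$. Equivalently, the rows of the matrix $H=\bigl[\tfrac{1}{\sqrt{\#B}}e^{-2\pi i b l/N}\bigr]_{b,l}$ are orthonormal, and for a square matrix this is the same as the columns being orthonormal.

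\textbf{Part (i).} Translating $B$ by $b_0$ multiplies the row indexed by $b$ by the unimodular scalar $e^{-2\pi i b_0 l/N}$, evaluated column by column. More precisely, the $(b,l)$ entry for $B+b_0$ equals $e^{-2\pi i b_0 l/N}$ times the $(b,l)$ entry for $B$. So the new matrix is $H D_1$, where $D_1$ is a diagonal unitary matrix indexed by $L$. Similarly, translating $L$ by $l_0$ gives $D_2 H$, with $D_2$ diagonal unitary indexed by $B$. A product of unitaries is unitary, so (i) holds. Since $b_0,l_0$ are real, some care is needed with the integrality convention in the definition. Unitarity itself does not use integrality, so I only claim the unitarity of the matrix.

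\textbf{Part (ii).} If $b'\equiv b$ and $l'\equiv l \pmod N$, write $b'=b+Nu$ and $l'=l+Nv$ with $u,v\in\Z$. Then
\[
\frac{b'l'}{N}=\frac{bl}{N}+bv+ul+Nuv \equiv \frac{bl}{N} \pmod 1.
\]
Hence the entries of the matrix are unchanged. The congruence is elementwise through a bijection $B\to B'$, $L\to L'$, so the cardinalities agree and the same unitary matrix results.

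\textbf{Part (iii).} I would argue by induction on $n$, so it suffices to treat $n=2$ and then iterate. The iteration works because $(\mbf N,\mbf B,\mbf L)$ for $n$ triples is obtained by combining the triple built from the first $n-1$ with $(N_n,B_n,L_n)$. For $n=2$, set $\mbf N=N_2N_1$, $\mbf B=N_2B_1+B_2$ and $\mbf L=L_1+N_1L_2$.

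The key step is the factorisation of the mask:
\[
m_{\mbf B}(\xi)=m_{B_1}(N_2\xi)\,m_{B_2}(\xi).
\]
This requires first showing that the sums $N_2b_1+b_2$ are distinct, so that $\#\mbf B=\#B_1\#B_2$, and likewise $\#\mbf L=\#L_1\#L_2$.

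\emph{Cardinality.} Distinctness of $\mbf L$ follows because $L_1$ has distinct residues mod $N_1$. Indeed, if $l_1\equiv l_1'\pmod{N_1}$ with $l_1\ne l_1'$, then $m_{B_1}((l_1-l_1')/N_1)=m_{B_1}(\text{integer})=1\neq0$, a contradiction. Distinctness of $\mbf B$ follows similarly because $B_2$ has distinct residues mod $N_2$, using column orthogonality of the unitary matrix of $(N_2,B_2,L_2)$.

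\emph{Orthogonality.} Take distinct $\lambda=l_1+N_1l_2$ and $\lambda'=l_1'+N_1l_2'$. Then
\[
m_{\mbf B}\Bigl(\frac{\lambda-\lambda'}{N_1N_2}\Bigr)=m_{B_1}\Bigl(\frac{l_1-l_1'}{N_1}+(l_2-l_2')\Bigr)\,m_{B_2}\Bigl(\frac{\lambda-\lambda'}{N_1N_2}\Bigr).
\]
If $l_1\ne l_1'$, the first factor vanishes by the integer periodicity of $m_{B_1}$, since $B_1\subset\Z$. If $l_1=l_1'$, then $l_2\ne l_2'$ and the second factor equals $m_{B_2}((l_2-l_2')/N_2)=0$. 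So $\mbf L$ has the right cardinality and gives mutually orthogonal exponentials, which proves (iii).

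The main obstacle is keeping track of the cardinality and distinctness issues, together with the integrality needed for the periodicity of $m_{B_1}$. These are exactly where the hypothesis $B\subset\Z$ from the definition of admissible pair enters.
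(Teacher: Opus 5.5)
Your proof is correct. The paper does not prove this lemma itself; it quotes it from the cited works of {\L}aba--Wang and Dutkay--Haussermann--Lai. Your argument is essentially the standard one found there:
\begin{itemize}
\item for (i), the diagonal unitary twist $D_2HD_1$;
\item for (ii), invariance of each entry $e^{-2\pi i bl/N}$ when $b,l\in\Z$ change by multiples of $N$;
\item for (iii), induction via $\mbf B_n=N_n\mbf B_{n-1}+B_n$ and $\mbf L_n=\mbf L_{n-1}+(N_{n-1}\cdots N_1)L_n$, which is exactly the $n=2$ formula applied to the $(n-1)$-fold triple and $(N_n,B_n,L_n)$, combined with the factorisation $m_{\mbf B}(\xi)=m_{B_1}(N_2\xi)\,m_{B_2}(\xi)$ and the case split on whether $l_1=l_1'$.
\end{itemize}

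Two cosmetic remarks. First, distinct residues of $B_2$ mod $N_2$ come from orthogonality of the rows, which are indexed by $B_2$, not the columns. This is immaterial, since the matrix is square and unitary. Second, you were right to claim only unitarity in (i) for real shifts, because the paper's definition requires $B,L\subset\Z$. Similarly, the ``$\R^d$'' in (iii) is a slip in the statement, and your one-dimensional argument is the relevant one.
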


\begin{lemma}\label{lem_gcd-Hadamard}\cite{Kong-Li-Wang-2026}
Let $(N,B)$ be an admissible pair and $t$ be a non-zero integer. If $\gcd(t,N)=1$, then $t$ is an admissible eigenvalue of $(N,B)$.
\end{lemma}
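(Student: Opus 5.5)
The plan is to work directly from the orthogonality characterization of Hadamard triples and use a Galois-conjugation argument on roots of unity. Let $(N,B,L)$ be a Hadamard triple with $B,L\subset\mathbb{Z}$. First I would record the standard reformulation. Since $\#B=\#L$, the matrix $\bigl[\frac{1}{\sqrt{\#B}}e^{-2\pi i bl/N}\bigr]_{b\in B,l\in L}$ is unitary if and only if its columns are orthonormal, that is, if and only if
\[
\sum_{b\in B} e^{2\pi i b(l-l')/N}=0 \quad\text{for all distinct } l,l'\in L .
\]
So the task is to show two things for $tL$: that $\#(tL)=\#L$, and that this vanishing condition holds with $l-l'$ replaced by $t(l-l')$.

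For the polynomial setup, I would use Lemma~\ref{lem_hadamard}(i) to translate $B$ and assume $B\subset\mathbb{N}$; translating $B$ changes neither the hypothesis nor the conclusion. Then put $P(x)=\sum_{b\in B}x^b\in\mathbb{Z}[x]$ and $\zeta=e^{2\pi i/N}$. Fix distinct $l,l'\in L$ and set $d=l-l'$. The hypothesis says $P(\zeta^d)=0$. In particular $N\nmid d$, since otherwise the sum would equal $\#B\neq0$. Hence $\zeta^d$ is a primitive $r$-th root of unity, where $r=N/\gcd(d,N)>1$ and $r\mid N$.

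The key step is the cyclotomic argument. The cyclotomic polynomial $\Phi_r$ is irreducible over $\mathbb{Q}$ and is the minimal polynomial of $\zeta^d$. Since $P$ has rational coefficients and vanishes at $\zeta^d$, we get $\Phi_r\mid P$, so $P$ vanishes at every primitive $r$-th root of unity. Now $\gcd(t,N)=1$ and $r\mid N$ give $\gcd(t,r)=1$, so $\zeta^{td}=(\zeta^d)^t$ is again a primitive $r$-th root of unity. Therefore
\[
\sum_{b\in B}e^{2\pi i b\,t(l-l')/N}=P(\zeta^{td})=0 .
\]

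It remains to check the cardinality. Because $N\nmid d$ and $\gcd(t,N)=1$, we have $N\nmid td$, so in particular $tl\neq tl'$. Thus $\#(tL)=\#L=\#B$, and the orthogonality relations above show that $(N,B,tL)$ is a Hadamard triple. Since the Hadamard triple $(N,B,L)$ was arbitrary, $t$ is an admissible eigenvalue of $(N,B)$.

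The only substantive step is the passage from vanishing at one primitive $r$-th root to vanishing at all of them, via the irreducibility of $\Phi_r$. Everything else is bookkeeping. The one point needing care is the reduction to $B\subset\mathbb{N}$, which is needed so that $P$ is a genuine polynomial, and this is exactly what Lemma~\ref{lem_hadamard}(i) provides.
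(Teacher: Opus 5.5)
Your proposal is correct. The paper gives no proof of this lemma; it is quoted from Kong--Li--Wang. What you wrote is the standard argument behind it. The orthogonality relation $\sum_{b\in B}\zeta^{b(l-l')}=0$ is a vanishing sum in $\mathbb{Q}(\zeta)$, where $\zeta=e^{2\pi i/N}$. The Galois automorphism $\zeta\mapsto\zeta^{t}$ exists precisely because $\gcd(t,N)=1$, and it carries that relation to $\sum_{b\in B}\zeta^{tb(l-l')}=0$. Your formulation via $\Phi_r\mid P$ is the same fact.

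Two small simplifications are available. Applying the automorphism directly handles negative exponents, so you would not need to translate $B$ into $\mathbb{N}$ or compute $r$. The injectivity of $l\mapsto tl$ needs only $t\neq0$.
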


\begin{lemma}\label{lem_hadamard'}
Let $(N,B,L)$ be a Hadamard triple. If $\# B< N$, then $\# B\le \frac{N}{2}$.
\end{lemma}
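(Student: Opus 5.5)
We must prove that if $(N,B,L)$ is a Hadamard triple with $\#B<N$, then $\#B\le N/2$. The plan is to show that $\#B$ divides $N$. Then $\#B$ is a proper divisor of $N$, so $\#B\le N/2$.

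First, translate the Hadamard condition into a statement about the mask polynomial. Write $m_B(\xi)=\sum_{b\in B}e^{-2\pi i b\xi}$. Unitarity of the matrix is equivalent to
\[
m_B\Bigl(\tfrac{l-l'}{N}\Bigr)=0 \quad\text{for all distinct } l,l'\in L .
\]
It also forces $\#L=\#B$, and the elements of $L$ are pairwise incongruent modulo $N$. Indeed, if $l\equiv l'\pmod N$ with $l\neq l'$, then $m_B((l-l')/N)=m_B(\text{integer})=\#B\neq0$, a contradiction. Hence $\#L\le N$.

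Second, use the spectral/tiling duality on the cyclic group $\Z_N$. Reduce $B$ modulo $N$. By Lemma~\ref{lem_hadamard}(ii) we may replace $B$ by its residues in $\{0,\dots,N-1\}$. The elements of $B$ are distinct mod $N$: since $L$ is a spectrum of $\delta_{N^{-1}B}$ with $\#L=\#B$, the functions $e^{-2\pi i b l/N}$ for $b\in B$ are linearly independent on $L$. So two elements of $B$ congruent mod $N$ would give identical rows, which is impossible.

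Thus $\bar B\subset\Z_N$ is a spectral set in the finite group $\Z_N$ with spectrum $\bar L\subset\Z_N$. Consider the functions $\phi_l(b)=e^{-2\pi i bl/N}$ on $\bar B$ for $l\in\bar L$. These are $\#B$ pairwise orthogonal characters restricted to $\bar B$.

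Now count via the Plancherel argument. Let $f=\mathbf 1_{\bar B}$ on $\Z_N$. Orthogonality gives $\hat f(l-l')=0$ for distinct $l,l'\in\bar L$. Hence the function $g=\mathbf 1_{\bar L}$ satisfies
\[
\sum_{x}|\hat f(x)|^2 (g*\tilde g)(x)=|\bar L|\,|\bar B|^2 ,
\]
since only the $x=0$ term survives. This alone does not yield divisibility in general. The divisibility of $\#B$ into $N$ for spectral sets in $\Z_N$ is a known open-type question (Fuglede in cyclic groups), so I would instead aim directly for the weaker bound $\#B\le N/2$.

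For the weaker bound, argue as follows. Since $\#B<N$, we have $\bar L\ne\Z_N$, and there is a nonzero $x\in\Z_N$ with $\hat f(x)\ne0$. Otherwise $f$ would be constant, so $\bar B=\Z_N$, contradicting $\#B<N$. Hence the difference set $\bar L-\bar L$ avoids the nonempty set $S=\{x\ne0:\hat f(x)\neq0\}$, and also $-S=S$.

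Choose $s\in S$. Then $\bar L\cap(\bar L+s)=\emptyset$. Both sets have size $\#L$ inside $\Z_N$, so $2\#L\le N$. Therefore $\#B=\#L\le N/2$.

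The main obstacle was the divisibility route, which I abandoned. The real content is just producing a nonzero frequency $s$ where $m_B(s/N)\ne0$. This follows from Fourier inversion on $\Z_N$, together with the distinctness of residues in $B$ and in $L$.
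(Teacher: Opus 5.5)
Your proof is correct, but it takes a different route from the paper's.

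The paper reduces $B,L$ into $C=\{0,1,\dots,N-1\}$ and works with the complement $B'=C\setminus B\neq\emptyset$. Since $\sum_{b\in C}e^{-2\pi i b(l-l')/N}=0$ for distinct $l,l'\in L$, subtracting the vanishing sum over $B$ shows that the exponentials indexed by $L$ stay orthogonal in $L^2(\delta_{N^{-1}B'})$. Hence $\#B=\#L\le\#B'=N-\#B$.

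You stay on the spectrum side instead. Fourier inversion on $\mathbb{Z}_N$ gives a frequency $s\neq0$ with $m_B(s/N)\neq0$. This is where $\emptyset\neq\bar B\neq\mathbb{Z}_N$ and the distinctness of $B$ modulo $N$ are used. Then $s\notin\bar L-\bar L$, so $\bar L$ and $\bar L+s$ are disjoint, and this packing argument gives $2\#L\le N$.

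Both arguments are short consequences of character orthogonality on $\mathbb{Z}_N$. They differ in what they give for a general orthogonal set $L$ of $\delta_{N^{-1}B}$, with $B$ a proper set of distinct residues:
\begin{itemize}
\item the paper's complement trick gives $\#L\le N-\#B$, which is the stronger bound when $\#B>N/2$;
\item your translate trick gives $\#L\le N/2$ regardless of $\#B$, which is stronger when $\#B<N/2$.
\end{itemize}
For a Hadamard triple, the identity $\#L=\#B$ makes both collapse to $\#B\le N/2$.

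The Plancherel identity and the divisibility discussion in the middle of your write-up are never used and can be deleted. Also, invoke Lemma~\ref{lem_hadamard}(ii) only after you have shown that the elements of $B$ are distinct modulo $N$. You do not actually need it, since $\hat{\mathbf 1}_{\bar B}(x)=m_B(x/N)$ holds directly once the residues are distinct.
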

\begin{proof}
By Lemma \ref{lem_hadamard}, we assume   $B,L\subseteq \{0,1,\dots,N-1\}$.
Let $C = \{0,1,2,\dots,N-1\}$ and define $B' = C\setminus B$. Since $(N,B,L)$ is a Hadamard triple, the matrix
\[
\left[ \frac{1}{\sqrt{\#B}} e^{-2\pi i \frac{bl}{N}} \right]_{b\in B,\, l\in L}
\]
is  unitary. It is clear that $(N,C,C)$ is a Hadamard triple, and we have the unitary   matrix
\[
\left[ \frac{1}{\sqrt{\#C}} e^{-2\pi i \frac{bl}{N}} \right]_{b\in C,\, l\in C}. 
\]

Since $\#B < N$ and   $B'\neq \emptyset$, for each pair $l\neq l'\in L$, we have
\begin{equation*}
	\int_{\mathbb{R}} e^{-2\pi i \frac{b(l-l')}{N} x}\,d\delta_{B'}(x)
	=\sum_{b\in B'} e^{-2\pi i \frac{b(l-l')}{N}} =\sum_{b\in C} e^{-2\pi i \frac{b(l-l')}{N}}-\sum_{b\in B} e^{-2\pi i \frac{b(l-l')}{N}}=0.
\end{equation*}
This implies that   $\{e^{-2\pi i l\cdot x} : l\in L\}$ forms an orthonormal set in $L^2(\delta_{B'})$.   Thus $\# L\le \dim L^2(\delta_{B'})$,  that is $\#B = \#L \le \#B'$, and hence   
\[
N = \#B + \#B' \ge 2\#B.
\]
\end{proof}

Let $\mcal{P}(\R)$ denote the set of all Borel probability measures on $\R$, and we write $\mathcal{T}_w$  for the weak topology  on $\mathcal{P}(\mathbb{R})$.
For $\mu \in \mcal{P}(\R)$, the \emph{Fourier transform} of $\mu$ is given by
$$
\wh{\mu}(\xi) = \int_{\R} e^{-2\pi i \xi \cdot x} \D \mu(x).
$$
For a finite set $B\subset \Z$, we write  
\begin{equation}\label{def_FMB}
M_B(\xi)=\wh{\delta_B}(\xi)=\frac{1}{\# B}\sum_{b\in B}e^{-2\pi i b \cdot\xi}.
\end{equation}
	
Given a Borel probability measure $\mu$ and a subset $\Lambda\subseteq\R$, we write
\begin{align*}
Q_{\mu,\Lambda}(\xi)=\sum_{\lambda\in \Lambda}|\wh{\mu}(\xi+\lambda)|^2,
\end{align*}
where $Q_{\mu,\Lambda}(\xi)=0$ for all $\xi\in\R$ if $\Lambda=\emptyset$.
The following theorem is often used to verify the spectrality of measures; see~\cite{Jorgensen-Pedersen-1998, LMW-2023} for the proofs.
	
\begin{theorem}\label{thm_Q} 
Let $\mu$ be a probability measure on $\R$, $\Lambda\subseteq\R$. Then		
		
\noindent (i) the set $\{e^{-2\pi i \lambda\cdot x}:\lambda\in\Lambda\}$ is an  orthonormal set in $L^2(\mu)$ if and only if $Q_{\mu,\Lambda}(\xi)\le1$ for all $\xi\in\R$.
		
\noindent (ii) the set $\{e^{-2\pi i \lambda\cdot x}:\lambda\in\Lambda\}$ is an  orthonormal basis in $L^2(\mu)$ if and only if $Q_{\mu,\Lambda}(\xi)\equiv1$ for all $\xi\in\R$.
\end{theorem}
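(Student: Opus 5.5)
We argue directly in $L^2(\mu)$, rewriting $Q_{\mu,\Lambda}$ as a sum of squared inner products against a unit vector, so that part (i) becomes Bessel's inequality and part (ii) becomes Parseval's identity. For $\xi\in\R$ write $e_\xi(x)=e^{-2\pi i \xi\cdot x}$. Then $\|e_\xi\|_{L^2(\mu)}=1$ and
\[
\langle e_{\xi}, e_{\lambda}\rangle=\int e^{-2\pi i(\xi-\lambda)x}\D\mu(x)=\wh{\mu}(\xi-\lambda).
\]
Since $\mu$ is a positive measure, $|\wh\mu(-\eta)|=|\wh\mu(\eta)|$. Hence
\[
Q_{\mu,\Lambda}(\xi)=\sum_{\lambda\in\Lambda}|\langle e_{-\xi},e_\lambda\rangle|^2 .
\]

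For (i), suppose first that $\{e_\lambda\}_{\lambda\in\Lambda}$ is orthonormal. Bessel's inequality applied to the unit vector $e_{-\xi}$ gives $Q_{\mu,\Lambda}(\xi)\le 1$ for every $\xi$. Conversely, suppose $Q_{\mu,\Lambda}\le 1$, fix $\lambda_0\in\Lambda$ and evaluate at $\xi=-\lambda_0$. The term $\lambda=\lambda_0$ contributes $|\wh\mu(0)|^2=1$, so
\[
1+\sum_{\lambda\ne\lambda_0}|\wh\mu(\lambda-\lambda_0)|^2\le 1 .
\]
Hence $\wh\mu(\lambda-\lambda_0)=0$ for every $\lambda\neq\lambda_0$, which is exactly orthogonality; normality is automatic.

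For (ii), if the family is an orthonormal basis, then Parseval's identity for $e_{-\xi}$ gives $Q_{\mu,\Lambda}(\xi)=\|e_{-\xi}\|^2=1$. Conversely, suppose $Q_{\mu,\Lambda}\equiv1$. By (i) the family is orthonormal. Let $P$ be the orthogonal projection onto the closed span $H$ of $\{e_\lambda\}_{\lambda\in\Lambda}$. Then
\[
\|Pe_{-\xi}\|^2=Q_{\mu,\Lambda}(\xi)=1=\|e_{-\xi}\|^2 ,
\]
so $e_{-\xi}\in H$ for every $\xi$, and $H$ contains all exponentials.

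The remaining step, which is the main one, is that the exponentials $\{e_\xi\}_{\xi\in\R}$ span a dense subspace of $L^2(\mu)$. We prove this by Fourier uniqueness rather than Stone--Weierstrass, since no compact support is assumed. Take $f\in L^2(\mu)$ orthogonal to every $e_\xi$. Because $\mu$ is a probability measure, $f\in L^1(\mu)$, so $\nu=\overline{f}\,\D\mu$ is a finite complex Borel measure. Orthogonality to all $e_\xi$ says $\wh\nu\equiv 0$. By uniqueness of Fourier--Stieltjes transforms, $\nu=0$, so $f=0$ $\mu$-a.e. Thus $H=L^2(\mu)$ and the family is an orthonormal basis.
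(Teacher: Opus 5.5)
Your proof is correct and is essentially the standard Jorgensen--Pedersen argument that the paper cites rather than reproduces: (i) is Bessel's inequality for the unit vector $e_{-\xi}$, with the converse read off at $\xi=-\lambda_0$, and (ii) is Parseval's identity combined with completeness of the exponentials in $L^2(\mu)$. The one real difference is the completeness step: the classical compactly supported setting uses Stone--Weierstrass, whereas you use uniqueness of Fourier--Stieltjes transforms for the finite measure $\overline{f}\,d\mu$. That is what makes your argument valid for arbitrary Borel probability measures on $\mathbb{R}$, which is the generality the statement asserts and which the paper needs, since its infinite convolutions need not be compactly supported.
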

	
Next,  we recall four classical theorems which are useful for studying weak limits of probability measures in this paper;  see \cite {Bil03} for proofs.  
\begin{theorem}\label{thm_weak converge}
	Let $\mu, \mu_1, \mu_2, \dots \in \mathcal{P}(\mathbb{R}^d)$. Then $\mu_n$ converges weakly to $\mu$ if and only if
	\[
	\lim_{n\to\infty} \hat{\mu}_n(\xi) = \hat{\mu}(\xi) \quad \text{for every } \xi \in \mathbb{R}^d.
	\]
\end{theorem}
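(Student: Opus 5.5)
The forward implication is immediate. For fixed $\xi\in\mathbb{R}^d$ the function $x\mapsto e^{-2\pi i \xi\cdot x}$ is bounded and continuous. Its real part $\cos(2\pi\xi\cdot x)$ and imaginary part $-\sin(2\pi\xi\cdot x)$ are therefore admissible test functions in the definition of weak convergence. This gives $\hat{\mu}_n(\xi)\to\hat{\mu}(\xi)$ for every $\xi$.

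For the converse I would follow the standard Lévy continuity argument, in three steps: tightness, then Prokhorov, then uniqueness of Fourier transforms.

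\textbf{Step 1: tightness.} First work in dimension one. By Fubini, for any probability measure $\nu$ on $\mathbb{R}$ and any $u>0$,
\[
\frac{1}{u}\int_{-u}^{u}\bigl(1-\hat{\nu}(\xi)\bigr)\,d\xi
=2\int_{\mathbb{R}}\Bigl(1-\frac{\sin(2\pi u x)}{2\pi u x}\Bigr)\,d\nu(x)
\ge \nu\Bigl(\Bigl\{|x|\ge \frac{1}{\pi u}\Bigr\}\Bigr).
\]
The inequality holds because $1-\frac{\sin s}{s}\ge 0$ everywhere and $1-\frac{\sin s}{s}\ge\frac12$ for $|s|\ge 2$. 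Apply this to $\nu=\mu_n$. Since $|\hat{\mu}_n|\le 1$ and $\hat{\mu}_n\to\hat{\mu}$ pointwise, dominated convergence shows the left-hand side converges to the same expression with $\hat{\mu}$ in place of $\hat{\mu}_n$. That expression is small when $u$ is small, because $\hat{\mu}$ is continuous at $0$ with $\hat{\mu}(0)=1$. Hence for every $\varepsilon>0$ there are $u>0$ and $n_0$ such that $\mu_n(\{|x|\ge 1/(\pi u)\})<\varepsilon$ for all $n\ge n_0$. Enlarging the interval handles the finitely many $n<n_0$.

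In $\mathbb{R}^d$ I would apply this to each coordinate projection $\mu_n\circ\pi_j^{-1}$. Its Fourier transform is $\hat{\mu}_n$ restricted to the $j$-th axis, so the one-dimensional argument applies. A union bound over $j=1,\dots,d$ then gives tightness of $\{\mu_n\}$.

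\textbf{Step 2: Prokhorov and identification of the limit.} Tightness and Prokhorov's theorem imply that every subsequence of $\{\mu_n\}$ has a further subsequence converging weakly to some probability measure $\nu$. By the forward implication, $\hat{\mu}_{n_j}\to\hat{\nu}$ pointwise along that subsequence, so $\hat{\nu}=\hat{\mu}$. Uniqueness of the Fourier transform for finite Borel measures then gives $\nu=\mu$.

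\textbf{Step 3: conclusion.} Every subsequence has a further subsequence converging weakly to the same limit $\mu$. Since weak convergence is determined by convergence of $\int f\,d\mu_n$ for bounded continuous $f$, and a real sequence with this subsequence property converges, the whole sequence $\mu_n$ converges weakly to $\mu$.

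The only real obstacle is the tightness estimate in Step 1. It is also the only place where the hypothesis is used beyond pointwise identification: we need the limit function to be continuous at the origin, which holds here because the limit is assumed to be $\hat{\mu}$ for a probability measure $\mu$.
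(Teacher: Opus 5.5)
Your proposal is correct: the forward direction via the bounded continuous test functions $\cos(2\pi\xi\cdot x)$ and $\sin(2\pi\xi\cdot x)$ works. So does the converse, via the truncation inequality $\nu(\{|x|\ge 1/(\pi u)\})\le u^{-1}\int_{-u}^{u}(1-\hat\nu(\xi))\,d\xi$, tightness of the coordinate marginals, Prokhorov, uniqueness of Fourier transforms, and the subsequence argument. The paper gives no proof of its own and simply cites Billingsley, whose proof is essentially this same standard L\'evy continuity argument.
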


\begin{theorem}\label{thm_factorization}
	Let $\{\mu_k\}_{k=1}^\f,\{\nu_k\}_{k=1}^\f\subseteq \mathcal{P}(\R)$. If $\mu_k$ and $\nu_k$ converge weakly to $\mu$ and $\nu$ respectively, then we have $\mu_k*\nu_k$ converges weakly to $\mu*\nu$.
\end{theorem}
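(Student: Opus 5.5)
Since Theorem~\ref{thm_factorization} is one of the classical results quoted from \cite{Bil03}, the plan is a short self-contained argument via Fourier transforms, reducing everything to Theorem~\ref{thm_weak converge}. For any Borel probability measures $\alpha,\beta$ on $\R$, Fubini's theorem gives
\[
\wh{\alpha*\beta}(\xi)=\int\!\!\int e^{-2\pi i \xi (x+y)}\D\alpha(x)\D\beta(y)=\wh{\alpha}(\xi)\,\wh{\beta}(\xi)
\]
for every $\xi\in\R$. This identity is the only structural input, and I will derive it first.

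Next, assume $\mu_k\to\mu$ and $\nu_k\to\nu$ weakly. By the ``only if'' direction of Theorem~\ref{thm_weak converge}, $\wh{\mu_k}(\xi)\to\wh{\mu}(\xi)$ and $\wh{\nu_k}(\xi)\to\wh{\nu}(\xi)$ pointwise. Since all these values lie in the closed unit disc, the products converge:
\[
\bigl|\wh{\mu_k}\wh{\nu_k}-\wh{\mu}\wh{\nu}\bigr|\le|\wh{\mu_k}-\wh{\mu}|+|\wh{\nu_k}-\wh{\nu}|\to 0 .
\]
Hence $\wh{\mu_k*\nu_k}(\xi)\to\wh{\mu*\nu}(\xi)$ for every $\xi$.

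Finally, $\mu*\nu$ is itself a Borel probability measure, being the pushforward of $\mu\times\nu$ under $(x,y)\mapsto x+y$. The ``if'' direction of Theorem~\ref{thm_weak converge} therefore yields $\mu_k*\nu_k\to\mu*\nu$ weakly.

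There is no real obstacle here. The only point needing care is that the limit object in Theorem~\ref{thm_weak converge} must already be known to be a probability measure, so no tightness argument is required; this holds because $\mu*\nu\in\mathcal P(\R)$ automatically.

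As an alternative avoiding Fourier analysis, I could take a bounded continuous $f$ and write $\int f\D(\mu_k*\nu_k)=\int f(x+y)\D(\mu_k\times\nu_k)$. Since $\mu_k\times\nu_k\to\mu\times\nu$ weakly (a standard fact for product measures), this converges to $\int f\D(\mu*\nu)$. However, that route needs the product-measure convergence lemma, so I prefer the Fourier route.
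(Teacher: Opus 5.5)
Your proof is correct and complete. The identity $\widehat{\alpha*\beta}=\widehat{\alpha}\,\widehat{\beta}$ follows from Fubini because the integrand is bounded, and the estimate $|\widehat{\mu_k}\widehat{\nu_k}-\widehat{\mu}\widehat{\nu}|\le|\widehat{\mu_k}-\widehat{\mu}|+|\widehat{\nu_k}-\widehat{\nu}|$ is right. You also correctly note that the ``if'' direction of Theorem~\ref{thm_weak converge} applies only because the limit $\mu*\nu$ is already known to lie in $\mathcal{P}(\R)$.

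The paper gives no proof of this statement; it simply quotes it from \cite{Bil03}. The standard argument in that setting is the alternative you mention: $\mu_k\times\nu_k\to\mu\times\nu$ weakly on $\R^2$, then the continuous mapping theorem for $(x,y)\mapsto x+y$.

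\emph{Your Fourier route.} It is shorter and matches how the rest of the paper works, since everything downstream is phrased through $\widehat{\mu}$ and Theorem~\ref{thm_weak converge}. However, it is tied to $\R^d$. It also delegates the substantive work to L\'evy's continuity theorem, whose sufficiency direction itself rests on a tightness argument.

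\emph{The product-measure route.} It avoids characteristic functions entirely and works verbatim in any separable metric group with continuous addition. It also gives the stronger conclusion $\int g\,\mathrm{d}(\mu_k\times\nu_k)\to\int g\,\mathrm{d}(\mu\times\nu)$ for every bounded continuous $g$ on $\R^2$, not only for $g(x,y)=f(x+y)$.
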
	

\begin{theorem}\label{thm_tight-subsequence weak converge}
	Let $\Phi \subseteq \mathcal{P}(\mathbb{R}^d)$. Then $\Phi$ is tight if and only if, for every sequence $\{\mu_n\} \subseteq \Phi$, there exists a subsequence $\{\mu_{n_j}\}$ and a Borel probability measure $\mu \in \mathcal{P}(\mathbb{R}^d)$ such that $\mu_{n_j}$ converges weakly to $\mu$ as $j \to \infty$.
\end{theorem}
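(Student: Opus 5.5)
We take $\Phi$ to be a family of Borel probability measures on $\mathbb{R}^d$, as in the statement. The plan is to prove the two directions of Prokhorov's theorem separately. The harder direction, tightness implies sequential compactness, goes through weak-$*$ compactness in the dual of $C_0(\mathbb{R}^d)$, with tightness used to rule out loss of mass. The converse uses the portmanteau inequality for open sets.

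\emph{Tightness implies compactness.} Let $\{\mu_n\}\subseteq\Phi$.
\begin{enumerate}
\item The space $C_0(\mathbb{R}^d)$ of continuous functions vanishing at infinity, with the sup norm, is separable. Fix a countable dense set $\{f_i\}$ in it.
\item For each $i$ the numbers $\int f_i\,\mathrm{d}\mu_n$ are bounded by $\|f_i\|_\infty$. A Cantor diagonal argument gives a subsequence $\{\mu_{n_j}\}$ along which $\int f_i\,\mathrm{d}\mu_{n_j}$ converges for every $i$.
\item By density and the bound $|\int f\,\mathrm{d}\mu_n|\le\|f\|_\infty$, the limit $L(f)=\lim_j\int f\,\mathrm{d}\mu_{n_j}$ then exists for every $f\in C_0(\mathbb{R}^d)$.
\item $L$ is a positive linear functional with $\|L\|\le 1$. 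The Riesz representation theorem gives a Borel measure $\mu$ with $\mu(\mathbb{R}^d)\le 1$ and $L(f)=\int f\,\mathrm{d}\mu$.
\end{enumerate}

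The main obstacle is that $\mu$ may have lost mass, i.e.\ $\mu(\mathbb{R}^d)<1$. Tightness is exactly what prevents this.
\begin{enumerate}
\item Given $\varepsilon>0$, choose a compact set $K$ with $\mu_n(K)\ge 1-\varepsilon$ for all $n$.
\item Choose $\varphi\in C_c(\mathbb{R}^d)$ with $0\le\varphi\le 1$ and $\varphi\equiv 1$ on $K$.
\item Then $\mu(\mathbb{R}^d)\ge\int\varphi\,\mathrm{d}\mu=\lim_j\int\varphi\,\mathrm{d}\mu_{n_j}\ge 1-\varepsilon$. Since $\varepsilon$ is arbitrary, $\mu$ is a probability measure.
\end{enumerate}

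It remains to upgrade convergence from $C_0$ to bounded continuous functions. For bounded continuous $f$, write $f=f\varphi+f(1-\varphi)$. The first term lies in $C_c$, so its integrals converge. The second term satisfies
\[
\Bigl|\int f(1-\varphi)\,\mathrm{d}\mu_{n_j}\Bigr|\le\varepsilon\|f\|_\infty,
\]
and the same bound holds for $\mu$, because $\int(1-\varphi)\,\mathrm{d}\mu=1-\int\varphi\,\mathrm{d}\mu\le\varepsilon$. Hence
\[
\limsup_j\Bigl|\int f\,\mathrm{d}\mu_{n_j}-\int f\,\mathrm{d}\mu\Bigr|\le 2\varepsilon\|f\|_\infty,
\]
and letting $\varepsilon\to0$ gives $\mu_{n_j}\to\mu$ weakly.

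\emph{Compactness implies tightness.} Suppose $\Phi$ is not tight. Then there is $\varepsilon>0$ such that for each $n$ some $\mu_n\in\Phi$ satisfies $\mu_n(\overline{B(0,n)})<1-\varepsilon$.
\begin{enumerate}
\item By hypothesis a subsequence $\mu_{n_j}$ converges weakly to some $\mu\in\mathcal{P}(\mathbb{R}^d)$.
\item Fix $r>0$. For $n_j\ge r$ we have $\mu_{n_j}(B(0,r))<1-\varepsilon$.
\item Since $B(0,r)$ is open, the portmanteau inequality gives $\mu(B(0,r))\le\liminf_j\mu_{n_j}(B(0,r))\le 1-\varepsilon$.
\item Letting $r\to\infty$ yields $\mu(\mathbb{R}^d)\le 1-\varepsilon$, which contradicts $\mu$ being a probability measure.
\end{enumerate}
Hence $\Phi$ is tight.

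If one prefers to avoid the portmanteau theorem in the last step, the same contradiction follows by integrating a continuous cutoff $\psi_r$ with $\mathbf{1}_{B(0,r)}\le\psi_r\le\mathbf{1}_{B(0,r+1)}$. The same argument at radius $r+1$ gives $\mu_{n_j}(B(0,r+1))<1-\varepsilon$ for $n_j\ge r+1$, so $\int\psi_r\,\mathrm{d}\mu=\lim_j\int\psi_r\,\mathrm{d}\mu_{n_j}\le 1-\varepsilon$. Letting $r\to\infty$ gives the same contradiction.
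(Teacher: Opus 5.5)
The paper gives no proof of this statement. It quotes Prokhorov's theorem from Billingsley \cite{Bil03}, and only the direction ``tight $\Rightarrow$ every sequence has a weakly convergent subsequence'' is actually used, in the proof of Theorem~\ref{thm_admissible-equi-positive}.

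Your argument is a correct and complete proof of both directions on $\mathbb{R}^d$. For the direct half it takes a different route from the standard general-metric-space proof in the cited source.

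\emph{Direct half.} You use local compactness. The limit is a weak-$*$ limit of a subsequence in the dual of the separable space $C_0(\mathbb{R}^d)$, identified through the Riesz representation theorem. Tightness enters only twice: to show no mass escapes to infinity, and to pass from $C_0$ to bounded continuous test functions via the cutoff $\varphi$. Billingsley's proof avoids Riesz and local compactness. As I recall it, he diagonalizes over a countable class of compact sets and builds the limit measure by hand from the limiting values via an inner-content/outer-measure construction. Weak convergence then follows from the open-set portmanteau inequality. Your version is shorter and more transparent on $\mathbb{R}^d$ but leans on the Riesz theorem; his buys validity in any metric space.

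\emph{Converse.} Your argument is essentially the first step of the standard proof: open sets increasing to the whole space, plus the portmanteau inequality. On $\mathbb{R}^d$ closed balls are already compact, so you can skip the total-boundedness step, which needs separability and completeness in general.
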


\begin{theorem} \label{thm_portmanteau}
Let $(X,d)$ be a metric space and let $\mu_n, \mu$ be Borel probability
measures on $X$.  The following are equivalent:
\begin{enumerate}
\item[$(\rmnum{1})$] $\mu_n \to \mu$ weakly, i.e., 
$\int_X f \,\mathrm{d}\mu_n \to \int_X f \,\mathrm{d}\mu$ 
for every bounded continuous function $f : X \to \mathbb{R}$.
\item[$(\rmnum{2})$] $\liminf_{n\to\infty} \mu_n(U) \ge \mu(U)$ 
for every open set $U \subseteq X$.
\item[$(\rmnum{3})$] $\limsup_{n\to\infty} \mu_n(F) \le \mu(F)$ 
for every closed set $F \subseteq X$.
\item[$(\rmnum{4})$] $\lim_{n\to\infty} \mu_n(A) = \mu(A)$ 
for every $\mu$-continuity set $A$ (i.e., $\mu(\partial A) = 0$).
\end{enumerate}
\end{theorem}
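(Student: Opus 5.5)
We prove the cycle of implications $(\rmnum{1})\Rightarrow(\rmnum{3})\Leftrightarrow(\rmnum{2})$, then $(\rmnum{2})\wedge(\rmnum{3})\Rightarrow(\rmnum{4})$, and finally $(\rmnum{4})\Rightarrow(\rmnum{1})$.

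\emph{$(\rmnum{1})\Rightarrow(\rmnum{3})$.} Fix a closed set $F\subseteq X$. For each $k\ge1$ consider
\[
f_k(x)=\max\{0,\,1-k\,d(x,F)\}.
\]
Each $f_k$ is bounded and Lipschitz, hence continuous. Moreover $f_k\ge \mathbbm{1}_F$, and since $F$ is closed, $f_k\downarrow \mathbbm{1}_F$ pointwise as $k\to\infty$. Hence, for each fixed $k$,
\[
\limsup_{n\to\infty}\mu_n(F)\le \lim_{n\to\infty}\int f_k\,\mathrm{d}\mu_n=\int f_k\,\mathrm{d}\mu.
\]
Letting $k\to\infty$, dominated (or monotone) convergence gives $\int f_k\,\mathrm{d}\mu\to\mu(F)$.

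\emph{$(\rmnum{2})\Leftrightarrow(\rmnum{3})$.} This follows by passing to complements, using $\mu_n(U)=1-\mu_n(X\setminus U)$ and the same identity for $\mu$.

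\emph{$(\rmnum{2})\wedge(\rmnum{3})\Rightarrow(\rmnum{4})$.} Let $A$ satisfy $\mu(\partial A)=0$, with interior $A^\circ$ and closure $\overline A$. Then
\[
\mu(A^\circ)\le\liminf_{n\to\infty}\mu_n(A^\circ)\le\liminf_{n\to\infty}\mu_n(A)\le\limsup_{n\to\infty}\mu_n(A)\le\limsup_{n\to\infty}\mu_n(\overline A)\le\mu(\overline A).
\]
Since $\mu(\partial A)=0$, the two extreme terms both equal $\mu(A)$, so $\mu_n(A)\to\mu(A)$.

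\emph{$(\rmnum{4})\Rightarrow(\rmnum{1})$.} This is the step I expect to be the main obstacle. Let $f$ be bounded and continuous. After an affine change $f\mapsto af+b$, which preserves the desired convergence by linearity of the integral, we may assume $0<f<1$. By the layer-cake formula,
\[
\int f\,\mathrm{d}\mu_n=\int_0^1\mu_n(\{f>t\})\,\mathrm{d}t,
\]
and similarly for $\mu$. Continuity of $f$ gives that $\{f>t\}$ is open and $\{f\ge t\}$ is closed, so $\partial\{f>t\}\subseteq\{f=t\}$. The sets $\{f=t\}$, $t\in(0,1)$, are pairwise disjoint, so only countably many of them can carry positive $\mu$-mass. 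Hence for Lebesgue-a.e.\ $t$, the set $\{f>t\}$ is a $\mu$-continuity set, and $(\rmnum{4})$ gives $\mu_n(\{f>t\})\to\mu(\{f>t\})$ for such $t$. The integrands are bounded by $1$, so the bounded convergence theorem on $[0,1]$ yields $\int f\,\mathrm{d}\mu_n\to\int f\,\mathrm{d}\mu$.
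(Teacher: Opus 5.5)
Your proof is correct. It is the standard argument in Billingsley \cite{Bil03}: the functions $\max\{0,1-k\,d(x,F)\}$ for (i)$\Rightarrow$(iii), complementation for (ii)$\Leftrightarrow$(iii), the interior/closure sandwich for (iv), and the layer-cake formula with only countably many exceptional levels for (iv)$\Rightarrow$(i). The paper does not prove this theorem itself; it only cites that reference, so your route is the one the paper relies on.
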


Finally,   we give a necessary and sufficient condition for the singularity of infinite convolutions relative to Lebesgue measure, and  this conclusion is frequently used   in our proofs.

\begin{lemma}\label{lem_singular}
Let $\{(N_k,B_k)\}_{k=1}^\infty$ be a sequence of admissible pairs in $\R$ satisfying 
$$
\sup_k\bigg\{\frac{\max\{|b|:b\in B_k\}}{N_k}\bigg\}<\f.
$$
Then the infinite convolution $\mu$ of  $\{(N_k,B_k)\}_{k=1}^\infty $ is singular  to the Lebesgue measure if and only if 
$$
\# \{k\in\N_+:\# B_k< N_k\}=\f.
$$
\end{lemma}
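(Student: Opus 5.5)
The plan is to prove the two directions separately. The bounded-digit hypothesis $C:=\sup_k \max\{|b|:b\in B_k\}/N_k<\infty$ will control the supports of the tails in both directions.

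\emph{Support of the tails.} Write $P_n=N_n\cdots N_1$. Since $N_k\ge 2$, every point of the support of $\nu_{>n}$ has modulus at most
$$
\sum_{j>n}\frac{\max|b_j|}{N_j N_{j-1}\cdots N_{n+1}}\le \sum_{j>n} C\,2^{-(j-n-1)}=2C.
$$
Hence $\operatorname{supp}\nu_{>n}\subseteq[-2C,2C]$, and therefore $\operatorname{supp}\mu_{>n}\subseteq[-2C/P_n,\,2C/P_n]$.

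\emph{Sufficiency ($\#\{k:\#B_k<N_k\}=\infty$ implies singular).} By Lemma~\ref{lem_hadamard'}, every $k$ with $\#B_k<N_k$ satisfies $\#B_k/N_k\le 1/2$, and in all cases $\#B_k\le N_k$.

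1. Since $\mu=\mu_n*\mu_{>n}$, the support of $\mu$ lies in a union of at most $\#B_1\cdots\#B_n$ intervals, each of length $4C/P_n$.
2. This union has Lebesgue measure at most
$$
4C\prod_{k=1}^n\frac{\#B_k}{N_k},
$$
which tends to $0$ because infinitely many factors are at most $1/2$ and the rest are at most $1$.
3. Thus $\mu$ is concentrated on the intersection of these sets, which is Lebesgue-null, so $\mu$ is singular.

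\emph{Necessity (only finitely many such $k$ implies $\mu$ is not singular).} Suppose $\#B_k=N_k$ for all $k>K$.

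1. Unitarity of the $N_k\times N_k$ matrix forces the elements of $B_k$ to be pairwise distinct mod $N_k$: two rows indexed by $b\equiv b'$ would coincide. So each such $B_k$ is a complete residue system mod $N_k$.
2. Iterating, the $j$-th partial convolution of $\nu_{>K}$ is uniform on $Q_j^{-1}\mathbf{B}_j$. Here $Q_j=N_{K+j}\cdots N_{K+1}$, and $\mathbf{B}_j=N_{K+j}\cdots N_{K+2}B_{K+1}+\cdots+B_{K+j}$ is again a complete residue system mod $Q_j$ (standard digit expansion argument), so its $Q_j$ points are distinct integers.
3. Convolve this discrete measure with normalized Lebesgue measure on $[0,1/Q_j]$. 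The resulting intervals are disjoint and each carries mass $1/Q_j$, so the density is at most $1$, and the support lies in a fixed bounded interval by the tail bound.
4. As $j\to\infty$ these measures still converge weakly to $\nu_{>K}$, since the smoothing kernel tends to $\delta_0$ (Theorem~\ref{thm_factorization}). The portmanteau theorem (Theorem~\ref{thm_portmanteau}) on open intervals then gives $\nu_{>K}(I)\le|I|$ for every interval $I$. So $\nu_{>K}$ is absolutely continuous, and hence so is $\mu_{>K}$, being a scaling of $\nu_{>K}$.
5. Therefore $\mu=\mu_K*\mu_{>K}$ is absolutely continuous, since the convolution of a discrete measure with an absolutely continuous one is absolutely continuous. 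Being a nonzero probability measure, $\mu$ is not singular.

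\emph{Main obstacle.} The delicate point is step~2 of the necessity direction: checking that the combined digit set $\mathbf{B}_j$ consists of distinct integers, i.e.\ is a complete residue system mod $Q_j$. I expect this to follow from Lemma~\ref{lem_hadamard}(iii) together with the same distinctness argument applied to the composite Hadamard triple. The weak-limit density bound in step~4 is then routine.
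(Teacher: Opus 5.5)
Your proof is correct. The singularity direction follows the paper's. Both arguments cover $\operatorname{supp}\mu$ by at most $\#B_1\cdots\#B_n$ intervals of length $O(1/P_n)$ centred at the points of $\operatorname{supp}\mu_n$, and both use Lemma~\ref{lem_hadamard'} to force $\prod_{k\le n}\#B_k/N_k\to 0$. You only need the upper covering estimate, not the two-sided bound on $\mathcal{L}(\operatorname{supp}\eta_k)$ that the paper records.

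The converse direction is where you genuinely depart from the paper. The paper works with $\mu$ itself:
\begin{enumerate}
\item it shows the points of $\operatorname{supp}\mu_k$ are distinct elements of $Q_k^{-1}\Z$, hence $1/Q_k$-separated;
\item it counts that at most $2d+3$ translated tails $x_0+\operatorname{supp}\mu_{>k}$ can meet $[x-r_k,x+r_k]$, where $r_k=1/Q_k$;
\item it deduces $\mu([x-r_k,x+r_k])/(2r_k)\le \tfrac{2d+3}{2}\prod_{j\le k}N_j/\#B_j$, which stays bounded because $\#B_j=N_j$ eventually;
\item it appeals to the density theorem for Radon measures (Theorem~2.12 of \cite{Mattila-1995}) to conclude $\mu\ll\mathcal{L}$.
\end{enumerate}
You instead peel off the finite discrete factor $\mu_K$. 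You then realise the tail $\nu_{>K}$ as a weak limit of step densities bounded by $1$, built from the complete residue systems $\mathbf{B}_j$. Finally you pass the inequality $\rho_j(I)\le|I|$ to the limit with Theorem~\ref{thm_portmanteau}.

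What each route buys:
\begin{itemize}
\item Your route needs no differentiation theory and no covering count, and uses only tools already quoted in Section~\ref{sec_preliminaries}. It also gives the explicit bound $\nu_{>K}\le\mathcal{L}$, so the tail has density at most $1$.
\item The paper's route handles all levels at once without splitting off a prefix. It uses only the distinctness of digit expansions and the boundedness of $\prod_j N_j/\#B_j$, not the full complete-residue structure.
\end{itemize}

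Both arguments rest on the same arithmetic fact. The step you flagged as the main obstacle is settled directly by the mixed-radix argument the paper uses for distinctness: reduce modulo the last factor $N_{K+j}$, conclude the last digits agree, cancel them, divide by $N_{K+j}$, and iterate. So you do not need to route it through Lemma~\ref{lem_hadamard}(iii).
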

\begin{proof}
First, we claim that $\# \{k\in\N_+:\# B_k< N_k\}=\f$ if and only if $\mathcal{L}(\operatorname{supp} \mu)=0$, where $\mathcal{L}$ denotes the Lebesgue measure. 
	
Let  $d>0$ be an integer such that
$$
\sup_k\bigg\{\frac{\max\{|b|:b\in B_k\}}{N_k}\bigg\}<\frac{d}{2}.
$$
Let $\mu_k$ be given by \eqref{def_mun}. Then we have $\operatorname{supp} \mu_k=\{\sum_{j=1}^{k}\frac{b_j}{N_jN_{j-1}\cdots N_1}:b_j\in B_j\}$, $\operatorname{supp} \mu=\overline{\{\sum_{j=1}^{\f}\frac{b_j}{N_jN_{j-1}\cdots N_1}:b_j\in B_j\}}$, and we write 
$$
\eta_k=\sum_{a\in\operatorname{supp} \mu_k}  \mathcal{L}|_{a+\frac{d}{N_kN_{k-1}\cdots N_1}[-1,1]},
$$
where $\mathcal{L}|_{[a,b]}$ denotes the restriction of the Lebesgue measure to the interval $[a,b]$.  It is clear that $\{\operatorname{supp}\eta_k\}_{k=1}^\f$ is a decreasing sequence of sets and
$\operatorname{supp} \mu=\bigcap_{k=1}^\f\operatorname{supp}\eta_k$ for all $k\in\N_+$. Thus 
$$
\mathcal{L}(\operatorname{supp} \mu)=\lim_{k\to \f}\mathcal{L}(\operatorname{supp} \eta_k).
$$

Since $\{(N_k,B_k)\}_{k=1}^\infty$ is a sequence of admissible pairs, for distinct $b_1b_2\cdots b_k \neq b_1'b_2'\cdots b_k'$ where $b_j, b_j' \in B_{j}$, we have
\[
\sum_{j=1}^{k}\frac{b_j}{N_jN_{j-1}\cdots N_1}\ne\sum_{j=1}^{k}\frac{b'_j}{N_jN_{j-1}\cdots N_1}
\]
Otherwise,
\[
b_k+N_{k}b_{k-1}+\cdots+N_kN_{k-1}\cdots N_1b_1=b'_k+N_{k}b'_{k-1}+\cdots+N_kN_{k-1}\cdots N_1b'_1.
\]
Let $j_0 = \max\{1 \le j \le k : b_j \neq b_j'\}$. Then we have $N_{j_0} \mid b_{j_0} - b_{j_0}'$. However, $b_{j_0}, b_{j_0}' \in B_{j_0}$, which leads to a contradiction. Thus,
\[
\#\operatorname{supp}\mu_k = \#B_k \#B_{k-1}\cdots \#B_1.
\]

For each $k\in\mathbb{N}_+$, let  $Q_k=N_1\cdots N_k$. Since $d\ge1$ and  every  $a\in\operatorname{supp}\mu_k$ may be written as $\frac{m}{Q_k}$ for some $m\in\Z$, the interval $I_a := [a-\frac1{2Q_k},\,a+\frac1{2Q_k}] \subset a+\frac{d}{Q_k}[-1,1]$ has length $1/Q_k$, and these intervals are pairwise disjoint.    Therefore we have
\[
\mathcal{L}(\operatorname{supp}\eta_k)\ge \sum_{a\in\operatorname{supp}\mu_k} \mathcal{L}(I_a)
= \frac{\#\operatorname{supp}\mu_k}{Q_k}
= \frac{\#B_k \#B_{k-1}\cdots \#B_1}{N_k N_{k-1}\cdots N_1}.
\]
Since each interval $a+\frac{d}{Q_k}[-1,1]$ has length $2d/Q_k$,  we have 
\[
\frac{\#B_k \#B_{k-1}\cdots \#B_1}{N_k N_{k-1}\cdots N_1}
\le \mathcal{L}(\operatorname{supp}\eta_k)
\le \frac{2d\,\#B_k \#B_{k-1}\cdots \#B_1}{N_k N_{k-1}\cdots N_1}.
\]

Since $(N_k,B_k)$ is an admissible pair for all $k\in\N_+$, by Lemma \ref{lem_hadamard'}, if $\# B_k< N_k$, we have $\# B_k\le \frac{N_k}{2}$, which implies that $\# \{k\in\N_+:\# B_k< N_k\}=\f$ if and only if $\mathcal{L}(\operatorname{supp} \mu)=0$.

By the claim, the necessity for the conclusion is clear, and we prove   the sufficiency by contradiction. Suppose  $\# \{k\in\N_+:\# B_k< N_k\}<\f$.

For $x\in\operatorname{supp} \mu$, we write $r_k=1/Q_k$  for each $k>0$.
The set $\operatorname{supp}\mu_{>k}$ is contained in $\bigl[-\frac{d}{Q_k},\frac{d}{Q_k}\bigr]$, and  for each $x_0\in\operatorname{supp}\mu_k$, the ``tail set''
$$
\overline{\bigg\{x_0+\sum_{j=k+1}^{\f}\frac{b_j}{N_jN_{j-1}\cdots N_1}:b_j\in B_j\bigg\}}
$$
is a subset of $\bigl[x_0-\frac{d}{Q_k},\,x_0+\frac{d}{Q_k}\bigr]$. Since a tail set intersects $[x-r_k,x+r_k]$ only if its centre $x_0$ satisfies $|x-x_0|\le \frac{1}{Q_k}+\frac{d}{Q_k}= \frac{d+1}{Q_k}$.  Recall that the points of $\operatorname{supp}\mu_k$ are separated by at least $1/Q_k$, and the number of such $x_0$ is at most $ 2d+3$, which implies 
\[
\mu\bigl([x-r_k,x+r_k]\bigr) \le  (2d+3)(\#B_k\#B_{k-1}\cdots\#B_1)^{-1}.
\]
Therefore we have 
\[
\frac{\mu([x-r_k,x+r_k])}{\mathcal{L}([x-r_k,x+r_k])}
\le  \frac{2d+3}{2}\,
\frac{N_{k}N_{k-1}\cdots N_1}{\# B_{k}\# B_{k-1}\cdots\# B_1}.
\]

Since $\# \{k\in\N_+:\# B_k< N_k\}<\f$, for all $x\in\operatorname{supp} \mu$,  we have
$$
\liminf_{r\to 0}\frac{\mu([x-r,x+r])}{\mathcal{L}([x-r,x+r])}\le\liminf_{k\to \f}\frac{\mu([x-r_k,x+r_k])}{\mathcal{L}([x-r_k,x+r_k])}<\f,
$$
and  by Theorem 2.12 in \cite{Mattila-1995}, $\mu$ is absolutely continuous with respect  to $\mathcal{L}$.
\end{proof}

\section{Random measures  generated by infinite convolutions}\label{sec_Random measure}
We recall some definitions and conclusions from probability theory which are used in our proofs. 	Let $X$ be a non-empty set and  $D$  a non-empty collection of some subsets of $X$.
We say $D$ is \emph{a $\lambda$-system on $X$} if it satisfies the following
	\begin{enumerate}
			\item[(\rmnum{1})]$X \in D$;
		\item[(\rmnum{2})] if $A, B\in D$ with $A\subseteq B$, then $B\setminus A \in D$;
				\item[(\rmnum{3})]if $A_{1} \subseteq A_{2} \subseteq A_{3} \subseteq \dots $ is an increasing sequence of sets in $D$, then $\bigcup_{n=1}^{\infty}A_{n} \in D$.
\end{enumerate}
Let $P$ be  a non-empty collection of some subsets of $X$. We say  $P$ is \emph{a $\pi$-system on $X$} if $A \cap B \in P$ for all $A, B\in P$. We write  $\sigma(P) $ for the $\sigma$-algebra generated by $P$. The following conclusion is standard in probability theory, and we refer readers to \cite{Cinlar11} for details.
\begin{theorem}\label{thm_dynkin's}
	If $P$ is a $\pi$-system and $D$ is a $\lambda$-system with $P \subseteq D$, then $\sigma(P) \subseteq D$.
\end{theorem}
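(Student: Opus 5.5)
The plan is the standard Dynkin argument: pass to the smallest $\lambda$-system $\lambda(P)$ containing $P$, show it is also a $\pi$-system, and conclude that it is a $\sigma$-algebra. Concretely, let $\lambda(P)$ be the intersection of all $\lambda$-systems on $X$ that contain $P$. The power set of $X$ is one such $\lambda$-system, and each of the conditions (i)–(iii) is preserved under arbitrary intersections. Hence $\lambda(P)$ is a $\lambda$-system, and $P\subseteq \lambda(P)\subseteq D$. It therefore suffices to show $\sigma(P)\subseteq\lambda(P)$, which follows once $\lambda(P)$ is a $\sigma$-algebra.

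First, I will check that any family $\mathcal{A}$ that is both a $\lambda$-system and a $\pi$-system is a $\sigma$-algebra.
\begin{enumerate}
\item[(a)] Complements: $X\in\mathcal{A}$ and $A\subseteq X$, so (ii) gives $X\setminus A\in\mathcal{A}$.
\item[(b)] Finite unions: $A\cup B=X\setminus\bigl((X\setminus A)\cap(X\setminus B)\bigr)$, which lies in $\mathcal{A}$ by (a) and the $\pi$-property.
\item[(c)] Countable unions: write $\bigcup_n A_n=\bigcup_n (A_1\cup\dots\cup A_n)$, an increasing union of sets in $\mathcal{A}$ by (b), and apply (iii).
\end{enumerate}

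The main step, and the only real obstacle, is proving that $\lambda(P)$ is closed under intersections. For a set $A\subseteq X$, define
\[
D_A=\{B\subseteq X: A\cap B\in\lambda(P)\}.
\]
I will show that $D_A$ is a $\lambda$-system whenever $A\in\lambda(P)$.
\begin{enumerate}
\item[(1)] $X\in D_A$, since $A\cap X=A\in\lambda(P)$.
\item[(2)] If $B\subseteq C$ with $B,C\in D_A$, then $A\cap(C\setminus B)=(A\cap C)\setminus(A\cap B)$ with $A\cap B\subseteq A\cap C$, so (ii) for $\lambda(P)$ gives $C\setminus B\in D_A$.
\item[(3)] For an increasing sequence $B_n\in D_A$, the sets $A\cap B_n$ increase to $A\cap\bigcup_n B_n$, so (iii) for $\lambda(P)$ applies.
\end{enumerate}

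The bootstrap then runs in two passes.
\begin{enumerate}
\item[(I)] Take $A\in P$. Since $P$ is a $\pi$-system, $P\subseteq D_A$, and minimality of $\lambda(P)$ gives $\lambda(P)\subseteq D_A$. So $A\cap B\in\lambda(P)$ for all $A\in P$ and $B\in\lambda(P)$.
\item[(II)] Now fix $B\in\lambda(P)$. Pass (I) says exactly that $P\subseteq D_B$. Since $D_B$ is a $\lambda$-system, minimality again gives $\lambda(P)\subseteq D_B$.
\end{enumerate}
Thus $\lambda(P)$ is a $\pi$-system, hence a $\sigma$-algebra by the first step, and so $\sigma(P)\subseteq\lambda(P)\subseteq D$.
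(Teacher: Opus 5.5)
Your proof is correct and complete. The paper gives no proof of its own and simply cites \cite{Cinlar11}, where the argument is this same standard one: take the smallest $\lambda$-system $\lambda(P)$ containing $P$, show it is closed under intersections using the two-pass $D_A$ argument, and observe that a family which is both a $\lambda$-system and a $\pi$-system is a $\sigma$-algebra.
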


Given $\{(N_k,B_k)\}_{k=1}^m$ where  $N_k\ge2$ and $B_k\subset \R $ is finite for all $1\le k\le m$. and a sequence of positive integers $\bn=\{ n_{k}\}_{k=1}^\f$,  recall that the mapping $\mu^{\mathbf{n}}$  is defined respectively by
\begin{equation} \label{Redef_rm}
	\mu^{\mathbf{n}}(\bw, B)= \mu^{\mathbf{n}}(\bw)(B)=\delta_{N_{\omega_{1}}^{-n_{1}}B_{\omega_{1}}} * \delta_{N_{\omega_{1}}^{-n_{1}}N_{\omega_{2}}^{-n_{2}}B_{\omega_{2}}} * \cdots ,
\end{equation}
for all $\bw \in \Omega$ and all Borel sets $B \in \mathcal{B}(\mathbb{R})$. Note that there exists a compact subset $K\subset\R$, independent of $\mbf{n}$, such that $\operatorname{supp} \mu^{\mathbf{n}}(\bw) \subseteq K$ for all $\bw\in\Omega$.

We define a mapping $\phi: (\Omega, d) \to (\mathcal{P}(\mathbb{R}), \mathcal{T}_{w})$ by
\begin{equation} \label{def_phi}
	\phi (\bw) = \mu^{\mathbf{n}}(\bw),
\end{equation}
where $\mu^{\mathbf{n}}(\bw)$ is given by \eqref{Redef_rm}.

Given a bounded Lipschitz function   $f : \mathbb{R} \to \mathbb{R}$,  
\[
\|f\|_\infty = \sup_{x\in\mathbb{R}} |f(x)|,\; \qquad
\mathrm{Lip}(f) = \sup_{x\neq y} \frac{|f(x)-f(y)|}{|x-y|}.
\]
The \emph{bounded Lipschitz metric} on $\mathcal{P}(\mathbb{R})$ is defined by
\[
\rho_{\mathrm{L}}(\mu,\nu) = 
\sup\Bigl\{ \Bigl| \int_{\mathbb{R}} f\,\mathrm{d}\mu - \int_{\mathbb{R}} f\,\mathrm{d}\nu \Bigr|
: \|f\|_\infty \le 1,\; \mathrm{Lip}(f) \le 1 \Bigr\}.
\]
The bounded Lipschitz metric metrizes the weak topology $\mathcal{T}_w$  on $\mathcal{P}(\mathbb{R})$, see \cite{Bogachev07}

\begin{lemma}\label{lem_phicts}
	The $\phi$ given by \eqref{def_phi} is continuous.
\end{lemma}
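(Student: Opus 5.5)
The plan is to prove continuity of $\phi$ through the bounded Lipschitz metric $\rho_{\mathrm{L}}$, which metrizes $\mathcal{T}_w$. Concretely, for every $\varepsilon>0$ we find $k$ such that $d(\bu,\bv)\le 2^{-k}$ forces $\rho_{\mathrm{L}}(\phi(\bu),\phi(\bv))<\varepsilon$; this in fact gives uniform continuity. Since $\Omega$ is a metric space, this suffices, and sequential continuity would do as well.

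First fix some notation. Let $R=\max_{1\le j\le m}\max\{|b|:b\in B_j\}$ and note that every $N_j\ge 2$. For any $\bw$ and any $k$, set $r_k(\bw)=N_{\omega_1}^{-n_1}\cdots N_{\omega_k}^{-n_k}\le 2^{-k}$. We then split
\[
\mu^{\bn}(\bw)=\mu^{\bn}_k(\bw)*\mu^{\bn}_{>k}(\bw),
\]
where $\mu^{\bn}_{>k}(\bw)$ is the weak limit of the remaining factors. This limit exists because $\mu^{\bn}_j(\bw)=\mu^{\bn}_k(\bw)*(\text{tail up to }j)$, and the tail products converge (for instance by Theorem~\ref{thm_weak converge}, since their Fourier transforms are products of factors $M_{B}$ that converge uniformly on compacts). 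Every point in the support of a tail factor $\delta_{r_{j-1}N_{\omega_j}^{-n_j}B_{\omega_j}}$ with $j>k$ has modulus at most $R\,2^{-j}$. Hence $\operatorname{supp}\mu^{\bn}_{>k}(\bw)\subseteq[-R2^{-k},R2^{-k}]$, which we may take as the compact-support remark in the paper.

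The key observation is that when $|\bu\wedge\bv|\ge k$, the first $k$ letters agree, so $\mu^{\bn}_k(\bu)=\mu^{\bn}_k(\bv)=:\nu$. Take $f$ with $\|f\|_\infty\le1$ and $\mathrm{Lip}(f)\le1$. Then
\[
\int f\,\mathrm{d}\mu^{\bn}(\bu)-\int f\,\mathrm{d}\mu^{\bn}(\bv)=\int\!\!\int\bigl(f(x+y)-f(x)\bigr)\,\mathrm{d}\mu^{\bn}_{>k}(\bu)(y)\,\mathrm{d}\nu(x)-(\text{same with }\bv).
\]
Each double integral has modulus at most $\sup|y|\le R2^{-k}$, because $f$ is $1$-Lipschitz and $\mu^{\bn}_{>k}$ is a probability measure. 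Therefore $\rho_{\mathrm{L}}(\phi(\bu),\phi(\bv))\le 2R\,2^{-k}$, which tends to $0$ as $k\to\infty$, and this yields continuity.

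I expect the only delicate point to be justifying the decomposition $\mu^{\bn}=\mu^{\bn}_k*\mu^{\bn}_{>k}$ together with the support bound on the tail. One way is to apply Theorem~\ref{thm_factorization} to $\mu^{\bn}_j=\mu^{\bn}_k*(\text{partial tail})$, using tightness from the uniform support bound (Theorem~\ref{thm_tight-subsequence weak converge}) to extract a convergent subsequence of partial tails. The alternative is to bypass the tail entirely. We compare $\mu^{\bn}_j(\bu)$ with $\mu^{\bn}_k(\bu)$ for $j>k$ in $\rho_{\mathrm{L}}$: each such step perturbs points by at most $R2^{-k}$, so the same estimate holds uniformly in $j$. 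Letting $j\to\infty$ then gives $\rho_{\mathrm{L}}(\mu^{\bn}(\bu),\mu^{\bn}_k(\bu))\le R2^{-k}$, and the triangle inequality finishes the argument. I would use this second route, since it avoids any existence question.
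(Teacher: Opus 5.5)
Your proposal is correct and follows essentially the same argument as the paper. Both metrize $\mathcal{T}_w$ by $\rho_{\mathrm{L}}$, use that $\mu^{\bn}_k(\bu)=\mu^{\bn}_k(\bv)$ whenever $|\bu\wedge\bv|\ge k$, and bound the tail (supported in an interval of length $O(2^{-k})$) via the Lipschitz constant of $f$, giving $\rho_{\mathrm{L}}(\phi(\bu),\phi(\bv))=O(2^{-k})$. Your preferred finite-stage variant, which bounds $\rho_{\mathrm{L}}(\mu^{\bn}_j(\bu),\mu^{\bn}_k(\bu))\le R2^{-k}$ uniformly in $j$ and then lets $j\to\infty$, is a harmless refinement that avoids the decomposition $\mu^{\bn}=\mu^{\bn}_k*\mu^{\bn}_{>k}$, which the paper simply asserts as clear.
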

\begin{proof}
	Since $\mathcal{T}_w$ is metrizable with respect to  the bounded Lipschitz metric 
	$\rho_{\mathrm{L}}$, it suffices to show that   $\rho_{\mathrm{L}}\bigl(\mu^{\mathbf{n}}(\bw^{(m)}), \mu^{\mathbf{n}}(\bw)\bigr) \to 0$ if $\bw^{(m)} \to \bw$    in $\Omega$.
	
	Recall that there exists a constant $M>0$ and a compact set $K\subseteq[-M,M]$ 
	such that $\operatorname{supp}\mu^{\mathbf{n}}(\bw)\subseteq K$ 
	for all $\bw\in\Omega$.  we write
	\begin{equation}
		\mu^{\mathbf{n}}_{>k}(\bw) =\delta_{ N_{\omega_{1}}^{-n_{1}} N_{\omega_{2}}^{-n_{2}} \dots N_{\omega_{k+1}}^{-n_{k+1}} B_{\omega_{k+1}}} * \delta_{N_{\omega_{1}}^{-n_{1}} N_{\omega_{2}}^{-n_{2}} \dots N_{\omega_{k+2}}^{-n_{k+2}} B_{\omega_{k+2}}} *  \dots,
	\end{equation}
	and it is clear that $\mu^{\mathbf{n}}(\bw)= \mu^{\mathbf{n}}_{k}(\bw) * \mu^{\mathbf{n}}_{>k}(\bw)$. 
	Given $k>0$,  for all $|\bw \wedge \bw'|=k$,   $\mu^{\mathbf{n}}_{k}(\bw)=\mu^{\mathbf{n}}_{k}(\bw')$.	Moreover, for every $k$,
	\[
	\operatorname{supp} \mu^{\mathbf{n}}_{>k}(\bw) \subseteq 
	\Bigl[ -C\sum_{j=k+1}^{\infty} 2^{-j}, \; C\sum_{j=k+1}^{\infty} 2^{-j} \Bigr]
	\subseteq [ -2^{-k}C, 2^{-k}C]
	\]
	for some constant $C>0$ independent of $\bw$ and $k$.

	Given a $1$-Lipschitz function $f:\mathbb{R}\to\mathbb{R}$ with 
	$\sup|f|\le 1$. For all $|\bw \wedge \bw'|\geq k$,
	\begin{align*}
		\Bigl| \int f\,\mathrm{d}\mu^{\mathbf{n}}(\bw) 
		- \int f\,\mathrm{d}\mu^{\mathbf{n}}(\bw') \Bigr|
		&=\Bigl| \iint f(x+y)\,\mathrm{d}\mu^{\mathbf{n}}_{k}(\bw)(x)\,
		\mathrm{d}  \mu^{\mathbf{n}}_{>k}(\bw)(y) \\
		&\qquad - \iint f(x+z)\,\mathrm{d}\mu^{\mathbf{n}}_{k}(\bw)(x)\,
		\mathrm{d}\mu^{\mathbf{n}}_{>k}(\bw')(z) \Bigr| \\
		&\le \iint |y-z| \,\mathrm{d}\mu^{\mathbf{n}}_{>k}(\bw)(y)\,
		\mathrm{d}\mu^{\mathbf{n}}_{>k}(\bw')(z) \\
		&\le \operatorname{diam}\bigl(\operatorname{supp}\mu^{\mathbf{n}}_{>k}(\bw)
		\cup \operatorname{supp}\mu^{\mathbf{n}}_{>k}(\bw')\bigr) \\
		&\le 2^{-k+1}C.
	\end{align*}
This implies that $\rho_{\mathrm{L}}(\mu^{\mathbf{n}}(\bw),\mu^{\mathbf{n}}(\bw'))\le 2^{-k+1}C$.
	Since $\bw^{(m)}\to\bw$,   for each $k>0$,  there exists   $M$ such that $|\bw^{(m)}\wedge\bw|\geq k$ for all $m\ge M$,  and we have 
	$\rho_{\mathrm{L}}(\mu^{\mathbf{n}}(\bw^{(m)}),\mu^{\mathbf{n}}(\bw))\le 2^{-k+1}C$  for all $m\ge M$. Hence  $\rho_{\mathrm{L}}\bigl(\mu^{\mathbf{n}}(\bw^{(m)}), \mu^{\mathbf{n}}(\bw)\bigr) \to 0$.
\end{proof}

Let $\mathcal{F} = \mathcal{B}(\Omega)$ be the Borel $\sigma$-algebra on $\Omega$ with respect to the metric $d$ and $\mathcal{B}(\mathcal{T}_{w})$ denotes the Borel $\sigma$-algebra generated  by the weak topology.

\begin{proposition}\label{prop_Borel_measurable_mapping}
For each $B \in \mathcal{B}(\mathbb{R})$, the mapping 
$\mu^{\mathbf{n}}(\cdot,B) : \Omega \to \mathbb{R}$ given by \eqref{Redef_rm} 
is $\mathcal{F}$-measurable.
\end{proposition}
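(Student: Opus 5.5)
The plan is to combine the continuity of $\phi$ from Lemma~\ref{lem_phicts} with a Dynkin-type argument via Theorem~\ref{thm_dynkin's}. Let
\[
D=\{B\in\mathcal{B}(\mathbb{R}) : \bw\mapsto \mu^{\mathbf{n}}(\bw,B) \text{ is } \mathcal{F}\text{-measurable}\}.
\]
The goal is to show that $D$ contains every open set. The open sets of $\mathbb{R}$ form a $\pi$-system generating $\mathcal{B}(\mathbb{R})$, so it will then be enough to check that $D$ is a $\lambda$-system.

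First I verify that $D$ is a $\lambda$-system. We have $\mathbb{R}\in D$, since $\mu^{\mathbf{n}}(\bw,\mathbb{R})=1$ is constant. If $A\subseteq B$ with $A,B\in D$, then $\mu^{\mathbf{n}}(\bw,B\setminus A)=\mu^{\mathbf{n}}(\bw,B)-\mu^{\mathbf{n}}(\bw,A)$. This is a difference of measurable functions, and it is finite because every $\mu^{\mathbf{n}}(\bw)$ is a probability measure. For an increasing union $A_j\uparrow A$, continuity from below gives $\mu^{\mathbf{n}}(\bw,A)=\lim_j\mu^{\mathbf{n}}(\bw,A_j)$ pointwise in $\bw$. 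A pointwise limit of measurable functions is measurable.

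Second, I show that every open $U\subseteq\mathbb{R}$ lies in $D$. The cleanest route is to show that $\nu\mapsto\nu(U)$ is lower semicontinuous on $(\mathcal{P}(\mathbb{R}),\mathcal{T}_w)$. This is exactly item (ii) of Theorem~\ref{thm_portmanteau}: if $\nu_k\to\nu$ weakly, then $\liminf_k\nu_k(U)\ge\nu(U)$. Since $\mathcal{T}_w$ is metrizable by $\rho_{\mathrm L}$, sequential lower semicontinuity is genuine lower semicontinuity. Composing with the continuous map $\phi$ of Lemma~\ref{lem_phicts}, the map $\bw\mapsto\mu^{\mathbf{n}}(\bw)(U)$ is lower semicontinuous on $(\Omega,d)$. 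Hence each set $\{\bw:\mu^{\mathbf{n}}(\bw,U)>c\}$ is open, and the map is Borel measurable.

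An alternative for this step avoids semicontinuity. Write $\mathbf 1_U$ as the increasing limit of the bounded continuous functions $f_j(x)=\min\{1,j\,\mathrm{dist}(x,\mathbb{R}\setminus U)\}$. Each $\bw\mapsto\int f_j\,d\mu^{\mathbf{n}}(\bw)$ is continuous, by weak continuity and Lemma~\ref{lem_phicts}. Monotone convergence then gives measurability of the limit.

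Finally, the open sets form a $\pi$-system, since they are closed under finite intersections, and this system is contained in $D$. Theorem~\ref{thm_dynkin's} then yields $\mathcal{B}(\mathbb{R})=\sigma(\text{open sets})\subseteq D$, which is the claim.

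The only step with real content is the second one, which passes from weak continuity of $\phi$ to measurability of $\bw\mapsto\mu^{\mathbf{n}}(\bw,U)$ for open $U$. I expect it to go through directly by either of the two routes above.
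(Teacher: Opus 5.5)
Your proposal is correct and matches the paper's proof essentially step for step. Both use the Dynkin $\pi$--$\lambda$ argument with the open sets as generating $\pi$-system, verify the three $\lambda$-system axioms in the same way, and handle open $U$ by composing the lower semicontinuous map $\nu\mapsto\nu(U)$ (portmanteau (ii)) with the continuous map $\phi$ of Lemma~\ref{lem_phicts}. Your alternative via $f_j\uparrow\mathbf{1}_U$ and monotone convergence is also valid, but it isn't needed.
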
%

\begin{proof}
	Write
	\[
	\mathcal{G} = \bigl\{ B \in \mathcal{B}(\mathbb{R}) :
	\bw \mapsto \mu^{\mathbf{n}}(\bw,B)
	\text{ is } \mathcal{F}\text{-measurable} \bigr\} .
	\]
	It is clear that $\mathcal{G}\subset \mathcal{B}(\mathbb{R}) $. To show  $\mu^{\mathbf{n}}(\cdot,B)$  is $\mathcal{F}$-measurable, it is equivalent to  prove  $\mathcal{G} = \mathcal{B}(\mathbb{R})$.
	Since the open sets form a $\pi$-system generating $\mathcal{B}(\mathbb{R})$, by Theorem~\ref{thm_dynkin's}, it is sufficient to   show that  $\mathcal{G}$ is a $\lambda$-system that contains all open subsets of $\mathbb{R}$.
	
	First, we show that  $\mathcal{G}$ contains all open sets.  For an open set $U \subseteq \mathbb{R}$, we define  $\pi_U : \mathcal{P}(\mathbb{R}) \to [0,1]$   by $\pi_U(\eta) = \eta(U)$.
	By  $(ii)$ in Theorem \ref{thm_portmanteau},  we have 
	$\liminf_{n} \eta_n(U) \ge \eta(U)$ if $\eta_n \to \eta$ weakly, that is,    $\pi_U$ is lower semicontinuous.   Hence $\pi_U$ is Borel measurable
	with respect to $\mathcal{B}(\mathcal{T}_{w})$.  Since
	$$
	\mu^{\mathbf{n}}(\bw, U) = \pi_U \circ \phi(\bw), 
	$$
	by Lemma \ref{lem_phicts} and the measurability of   $\pi_U$,
	we obtain that $    \mu^{\mathbf{n}}(\bw, U)$ is $\mathcal{F}$-measurable.  Thus every open $U$ belongs to
	$\mathcal{G}$.
	
	To show  $\mathcal{G}$ is a $\lambda$-system, we   verify the three defining properties: 		$(\textrm{i})$ $\mathbb{R} \in \mathcal{G}$   is clear.
	\begin{enumerate}
		\item[(\rmnum{2})] Let $A, B \in \mathcal{G}$ with $A \subseteq B$.
		Then for every $\bw$,
		\[
		\mu^{\mathbf{n}}(\bw, B\setminus A) =
		\mu^{\mathbf{n}}(\bw, B) - \mu^{\mathbf{n}}(\bw, A),
		\]
and it is  $\mathcal{F}$-measurable.  Therefore
		$B\setminus A \in \mathcal{G}$.
		\item[(\rmnum{3})] Let $\{A_n\}_{n=1}^\infty \subseteq \mathcal{G}$ be
		an increasing sequence and set $A = \bigcup_{n=1}^\infty A_n$.
		By $\sigma$-additivity,
		\[
		\mu^{\mathbf{n}}(\bw, A) =
		\lim_{n\to\infty} \mu^{\mathbf{n}}(\bw, A_n)
		\]
		for every $\bw$, and we have that $\bw \mapsto \mu^{\mathbf{n}}(\bw,A)$
		is $\mathcal{F}$-measurable, which implies  $A \in \mathcal{G}$.
	\end{enumerate}
	Therefore $\mathcal{G}$ is   a $\lambda$-system, and the conclusion holds.
\end{proof}

\begin{proof}[Proof of Theorem \ref{thm_measurable}]
The infinite convolution $\mu^{\mathbf{n}}(\bw)$ exists and constitutes a Borel probability measure for every sequence $\mathbf{n}$ of positive integers. By virtue of Proposition \ref{prop_Borel_measurable_mapping}, the map $\mu^{\mathbf{n}}(B)\colon\Omega\to \mathbb{R}$ defined by \eqref{Redef_rm} is $\mathcal{F}$-measurable for each $B \in \mathcal{B}(\mathbb{R})$, and  the mapping $\mu^{\mathbf{n}} \colon \Omega\times \mathcal{B}(\mathbb{R}) \to [0,+\infty)$ is a random measure.
\end{proof}

\section{Spectral infinite convolutions}\label{sec_t}

The equi-positive family is a key tool to study the spectrality of fractal measures with compact support in \cite{An-Fu-Lai-2019,Dutkay-Haussermann-Lai-2019},  and it was then generalized in~\cite{LMW-2023, MZ25} to the current version which is also applicable to infinite convolutions without compact support in $\R$. By the same argument, it is straightforward to  extend it  into $\R^d$.  In this paper, we define the $t$-equi-positive family, and then use this idea to study the spectral eigenvalues of measures.

\begin{definition}\label{def_equi_measure}
	Given a positive integer $t$. We call $\Phi\subset\mcal{P}(\R)$ a {\it $t$-equi-positive family} if there exist reals $\epsilon_0>0$ and $\delta_0>0$ such that for all $x\in[0,t)$ and $\mu \in \Phi$, there exists $k_{x,\mu }\in \Z$ such that
	\[
	| \hat{\mu}(x+y+tk_{x,\mu })| \ge \epsilon_0,
	\]
	for all $|y|< \delta_0$, where $k_{x,\mu }=0$ for $x=0$.
	
	We call the infinite convolution $\mu$ of $\{(N_k,B_k)\}_{k=1}^\infty $ a {\it $t$-equi-positive measure} if there exists a subsequence $\{\nu_{>n_j}\}_{j=1}^\infty$ (defined by \eqref{def_nu_n}) that forms a $t$-equi-positive family.
\end{definition}
Note that  if $\mu$ is a $t$-equi-positive measure, for arbitrary integer $s$ satisfying $s|t$, $\mu$ is a $s$-equi-positive measure, in particular, $\mu$ is a $1$-equi-positive measure.

\begin{proof}[Proof of Theorem \ref{thm_A-spectral eigen set}]
 First, suppose that $\mathcal{E}$ is an infinite set. The finite case follows by the same argument with the enumeration truncated. Since $\mathcal{E}\subseteq\mathbb{N}_+$, we may list its elements in strictly increasing order as 
$$
\mathcal{E}=\{t_0,t_1,\cdots,t_p,\cdots\},
$$
where $t_i< t_j$ if $i<j$. Then for each $p\in\N$, as $\mu$ is a $t_p$-equi-positive measure there exists an increasing sequence $\{n_{i,{t_p}}\}_{i=1}^\infty\subseteq\N_+$, such that the subsequence $\{\nu_{>n_{i,{t_p}}}\}_{i=1}^\infty$ of  $\{\nu_{>k}\}_{k=1}^\infty$ is a $t_p$-equi-positive family. It follows that, for each $p\in\N_+$, there exist $\epsilon_{t_p}>0$ and $\delta_{t_p}>0$ such that for all $x\in[0,t_p) $ and $i\ge1$, there exists an integer $k_{x,i,{t_p}}\in \Z$ such that
\begin{equation}\label{Ineq_epf}
| \hat{\nu}_{>n_{i,{t_p}}}(x+y+t_pk_{x,i,{t_p} })| \ge \epsilon_{t_p},
\end{equation}
for all $|y|< \delta_{t_p}$, where $k_{x,i,{t_p} }=0$ for $x=0$.

Since $(N_k,B_k)$ is an admissible pair on $\mathbb{R}$,  by Lemma \ref{lem_hadamard}  (i) and (ii),  we  may choose a set $L_k$ with $0\in L_k\subset \{0,1,\dots,N_k-1\}$ such that $(N_k,B_k,L_k)$ forms a Hadamard triple for every $k\in\mathbb{N}_+$.
For integers $m>n\ge 0$, we define
\begin{align*}
\mathbf{N}_{n,m}&=N_mN_{m-1}\cdots N_{n+1}, \\
\mathbf{B}_{n,m}&=\mathbf{N}_{n+1,m} B_{n+1}+\mathbf{N}_{n+2,m} B_{n+2}+\cdots+\mathbf{N}_{m-1,m} B_{m-1}+B_m,  \\
\mathbf{L}_{n,m}&=L_{n+1}+\mathbf{N}_{n,n+1}L_{n+2}+\cdots+\mathbf{N}_{n,m-1}L_m.
\end{align*}
By  Lemma \ref{lem_hadamard} (i) and (iii), $(\mathbf{N}_{n,m},\mathbf{B}_{n,m},\mathbf{L}_{n,m})$ is again a Hadamard triple, so $\mathbf{L}_{n,m}$ is a spectrum for $\delta_{\mathbf{N}_{n,m}^{-1}\mathbf{B}_{n,m}}$.
Since every $t\in\mathcal{E}$ is an admissible eigenvalue for each pair $(N_k,B_k)$, the triple $(\mathbf{N}_{n,m},\mathbf{B}_{n,m},t\mathbf{L}_{n,m})$ is also a Hadamard triple, and $t\mathbf{L}_{n,m}$ serves as a spectrum of $\delta_{\mathbf{N}_{n,m}^{-1}\mathbf{B}_{n,m}}$ for all $t\in\mathcal{E}$.

Next,  we construct a sequence of finite subsets $\Lambda_j\subset\Z$ by induction. 

For  $j=1$, let $m_1=n_{1,t_0}$ and $\Lambda_1=\mbf L_{0,m_1}$. Note that $t\Lambda_1$ is a spectrum of $\mu_{m_1}$ given by  \eqref{def_mun} with $0\in\Lambda_1$ for all $t\in \mathcal{E}$.

For each $j\in\N_+$, there exist unique $p\in\N$ and $q\in\N_+$ such that $p\le q$ and $j=1+2+3+\cdots+q-1+p$, then we write $s_j=t_p$, $r_j=t_q$.

For $j\ge 2$, suppose that $m_{j-1}$ and $\Lambda_{j-1}$ have already been defined, with $0\in\Lambda_{j-1}$, such that $t\Lambda_{j-1}$ is a spectrum of $\mu_{m_{j-1}}$ for all $t\in\mathcal{E}$. Since $N_k\ge2,$ we  choose  $m_j\in \{n_{i,{s_j}}\}_{i=1}^\infty$ such that $m_j>m_{j-1}$ and
\begin{equation}\label{eq_lambda}
|\mbf N_{0,m_j}^{-1}\lambda|<\frac{\delta_{s_j}}{2r_j} \qquad  \textrm{for all $\lambda\in\Lambda_{j-1}$}.
\end{equation}
By \eqref{Ineq_epf}, for each $\lambda\in \mbf L_{m_{j-1},m_j}$, we may choose a integer $k_{\lambda,j}\in\Z$ such that
\begin{equation}\label{eq_equi-positive}
| \hat{\nu}_{>m_j}(\mbf N_{m_{j-1},m_j}^{-1}\cdot s_j\lambda+y+s_jk_{\lambda,j })| \ge \epsilon_{s_j},
\end{equation}
for all $|y|<\delta_{s_j}$, where $k_{\lambda,j }=0$ if $\lambda=0$, and we   write
\begin{equation}\label{def_Lmdj}
\Lambda_j=\Lambda_{j-1}+\mbf N_{0,m_{j-1}}\{\lambda+\mbf N_{m_{j-1},m_j}k_{\lambda,j}:\lambda\in\mbf L_{m_{j-1},m_j}\}.
\end{equation}
Since $t\Lambda_{j-1}$ is a spectrum of $\mu_{m_{j-1}}$ for all $t\in \mathcal{E}$, by Lemma \ref{lem_hadamard}(ii) and (iii), we have that $t\Lambda_{j}$ is a spectrum of $\mu_{m_j}$ for all $t\in \mathcal{E}$. Note that $0\in\mbf L_{m_{j-1},m_j}$, $k_{0,j}=0$ and $0\in\Lambda_{j-1}$, and it is clear that  $0\in\Lambda_j$ and $\Lambda_{j-1}\subseteq \Lambda_j$. 

Hence the sequence $\{\Lambda_j\}_{j=1}^\infty$ is defined, and we write
\begin{equation}\label{def-Lambda}
\Lambda=\bigcup_{j=1}^\f\Lambda_j.
\end{equation}

To verify that $t_p\Lambda$ is a spectrum of $\mu$ for every $p\in\mathbb{N}$, by Theorem \ref{thm_Q}, it suffices to prove that for all $\xi\in\mathbb{R}$,
\[
Q_{\mu,t_p\Lambda}(\xi)=\sum_{\lambda\in \Lambda}\bigl|\hat{\mu}(\xi+t_p\lambda)\bigr|^2=1.
\]

Fix $p\in\N$. For all $j\geq 1$, since $t_p\Lambda_{j}$ is a spectrum of $\mu_{m_j}$, by Theorem \ref{thm_Q}, we have that
\begin{equation}\label{eq_Q_j}
Q_{\mu_{m_j},t_p\Lambda_j}(\xi)=\sum_{\lambda\in \Lambda_j}|\hat{\mu}_{m_j}(\xi+t_p\lambda)|^2=1,
\end{equation}
for all $\xi\in\R$, and it implies that
$$
\sum_{\lambda\in \Lambda_j}|\hat{\mu}(\xi+t_p\lambda)|^2 =\sum_{\lambda\in \Lambda_j}|\hat{\mu}_{m_j}(\xi+t_p\lambda)|^2|\hat{\mu}_{>m_j}(\xi+t_p\lambda)|^2
\le\sum_{\lambda\in \Lambda_j}|\hat{\mu}_{m_j}(\xi+t_p\lambda)|^2   =1.
$$
It immediately follows that
\begin{equation}\label{eq_Q}
Q_{\mu,t_p\Lambda}(\xi)\le1 \qquad  \textrm {for all $\xi\in\R$.}
\end{equation}

Fix $\xi\in\R$. We define $f(\lambda)=|\hat{\mu}(\xi+t_p\lambda)|^2 $ for $ \ \lambda\in\Lambda$ and for each $j\in\N_+$,
$$
f_j(\lambda)=\left\{
\begin{array}{rl}
|\hat{\mu}_{m_j}(\xi+t_p\lambda)|^2, & \text{if}\    \lambda\in\Lambda_j,\\
0\ \ \ \ \ \ \ \ \ \ \ \ \ \ \, & \text{if} \ \lambda\in\Lambda\setminus\Lambda_j.
\end{array}\right.
$$
By \eqref{def_mun} and \eqref{def_mugn},  we immediately have 
\begin{equation}\label{def_f_i}
f(\lambda) =|\hat{\mu}_{m_j}(\xi+t_p\lambda)|^2|\hat{\mu}_{>m_j}(\xi+t_p\lambda)|^2 \geq  f_j(\lambda) \big|\hat{\nu}_{>m_j}\big(\mbf N_{0,m_j}^{-1}(\xi+t_p\lambda)\big)\big|^2.
\end{equation}
Note that  $\Lambda_{j-1}\subseteq \Lambda_j$, and it is clear that for each $\lambda\in\Lambda$, there exists $j_\lambda\ge1$ such that $\lambda\in\Lambda_j$ for all $j\ge j_\lambda$. Since $\mu$ is the weak limit of $\mu_j$,  we have $f(\lambda)=\lim_{j\to\f}f_j(\lambda)$.
		
Since $N_k\ge2,$  there exists an integer $j_0\ge1$   such that for $j\ge j_0$
\begin{equation}\label{eq_xi}
|\mbf N_{0,m_j}^{-1}\xi|<\frac{\delta_{t_p}}{2} \ \text{and} \ r_j>t_p.
\end{equation}
Fix $j\ge j_0$. For each $\lambda\in\Lambda_j$, by \eqref{def_Lmdj}, there exist  $\lambda_1\in\Lambda_{j-1}$, $\lambda_2\in\mbf L_{m_{j-1},m_j}$ and $k_{\lambda_2,j}\in \Z $ such that
$$
\lambda=\lambda_1+\mbf N_{0,m_{j-1}}\lambda_2+\mbf N_{0,m_j} k_{\lambda_2,j},
$$
and by  \eqref{eq_lambda} and \eqref{eq_xi}, we have 
$$
|\mbf N_{0,m_j}^{-1}(t_p\lambda_1+\xi)|<\frac{\delta_{t_p}}{2}+\frac{\delta_{s_j}}{2}.
$$
Since there exists $j_1\in\N_+$ such that $\eta_j=1+2+\cdot+(p-2+j+j_1)+p>j_0$ for all $j\in\N_+$,  thus $s_{\eta_j}=t_p$. Then, combining this with \eqref{eq_equi-positive} and \eqref{def_f_i}, we have  that
$$
f(\lambda) \geq f_{\eta_j}(\lambda) \big|\hat{\nu}_{>m_{\eta_j}}\big(\mbf N_{m_{{\eta_j}-1},m_{\eta_j}}^{-1}\cdot t_p\lambda_2+\mbf N_{0,m_{\eta_j}}^{-1}(t_p\lambda_1+\xi))+k_{\lambda_2,\eta_j}\big)\big|^2 \ge\epsilon_{t_p}^2f_{\eta_j}(\lambda).
$$
Therefore, for each $j\in\N_+$, we obtain that $f_{\eta_j}(\lambda)\le\epsilon_{t_p}^{-2}f(\lambda) $ for all $\lambda\in\Lambda$.
		
Let $\rho$ be the counting measure on  $\Gamma$. By \eqref{eq_Q}, it is clear that  $f(\lambda)$ is  $\rho$-integrable, and
$$
Q_{\mu,t_p\Lambda}(\xi)=\sum_{\lambda\in \Lambda}|\hat{\mu}(\xi+t_p\lambda)|^2=\int_{\Lambda} f(\lambda) d\rho(\lambda).
$$
Since $f_{\eta_j} \le\epsilon_{t_p}^{-2} f$ for all $j\in\N_+$,   by the dominated convergence theorem, \eqref{eq_Q_j} and \eqref{def_f_i}, it immediately follows that
$$
Q_{\mu,t_p\Lambda}(\xi) =\lim_{j\to\f}\int_{\Lambda} f_{\eta_j}(\lambda) d\rho(\lambda)=\lim_{j\to\f}\sum_{\lambda\in \Lambda_{\eta_j}}|\hat{\mu}_{m_{\eta_j}}(\xi+t_p\lambda)|^2 =1.
$$
By Theorem \ref{thm_Q}, the infinite convolution $\mu$ is a spectral measure with spectrum $t_p\Lambda\subseteq\Z$. 

Hence for every $t\in \mathcal{E}$,  $t\Lambda$ is a spectrum of $\mu$. 

Since $\mathcal{E}\neq \emptyset$, by Definition \ref{def_equi_measure}, it implies that $1\in \mathcal{E}$. It follows that all elements of $\mathcal{E}$ are spectral eigenvalues of $\mu$.

If there exists some $t\in\mathcal{E}$ with $t>1$, then $t$ is a spectral eigenvalue of $\mu$. We claim that $\# \{k\in\mathbb{N}_+:\# B_k< N_k\}=\f.$ Suppose, to the contrary, that $\#\{k:\# B_k< N_k\}<\infty$. Then there exists $M>0$ such that $\# B_k=N_k$ for all $k\ge M$, and for the infinite convolution $\mu'$ of $\{(N_k,B_k)\}_{k=M}^\f$, by the definitions of   eigenvalues and $t$-equi‑positive measures, we  repeat the previous proof to obtain a new $\Lambda$, such that $t\Lambda$ is a spectrum of $\mu'$ for all $t\in\mathcal{E}$. For simplicity, we assume that $\# B_k=N_k$ for all $k\in\mathbb{N}_+$. Let
\[
L_k=\{0,1,\dots,N_k-1\}.
\]
Then $(N_k,B_k,L_k)$ is a Hadamard triple. Define
\[
\Lambda'=L_1+N_1L_2+\cdots+N_1N_2\cdots N_{k-1}L_k+\cdots.
\]
 By the same argument as before, we conclude that $Q_{\mu,\Lambda'}(\xi)\le1$ for all $\xi\in\mathbb{R}$. By Theorem \ref{thm_Q}, the set
\[
\bigl\{e_\lambda(x) = e^{-2\pi i \lambda x} : \lambda \in \Lambda'\bigr\}
\]
is an orthonormal set in $L^2(\mu)$.  It is clear that $\Lambda'=\N$, and we  have that $\bigl\{e_\lambda(x) = e^{-2\pi i \lambda x} : \lambda \in \Z\bigr\}$ is also an orthonormal set in $L^2(\mu)$.

  However, the spectra $\Lambda$ and $t\Lambda$ constructed above satisfy $\Lambda,t\Lambda\subset\Z$. This implies that an orthonormal basis is strictly contained in an orthonormal set, which gives a contradiction.

In the end, we   show that at infinitely many levels of the construction,  we have at least two inequivalent choices for the block $\mathbf{L}_{m_{j-1},m_j}$, which leads to uncountably
many distinct spectra $\Lambda$.

Consider two Hadamard triples $(N_k,B_k,L_k)$ with $0\in L_k\subset\{0,1,\dots,N_k-1\}$ for $k=1,2$. If $\#L_k<N_k$ for both $k=1,2$, we may translate $L_k$ and then reduce modulo $N_k$ to obtain $L_k'$ such that $0\in L_k'\subset\{0,1,\dots,N_k-2\}$, and $(N_k,B_k,L_k')$ remains a Hadamard triple.

Pick an arbitrary nonzero element $l\in L_1'$, and define $L_1''=\big(L_1'\setminus \{l\}\big)\cup\{l+N_1\}$. Then both $(N_1N_2,\,N_2B_1+B_2,\,L_1'+N_1L_2')$ and $(N_1N_2,\,N_2B_1+B_2,\,L_1''+N_1L_2')$ are Hadamard triples, and $L_1'+N_1L_2'$, $L_1''+N_1L_2'$ are distinct subsets of $\{0,1,\dots,N_1N_2-1\}$.

Since $\#\{k\in\N_+:\#B_k<N_k\}=\f$, we may group each maximal run
of indices with $\#B_j=N_j$ together with the next index for which
$\#B_j<N_j$. By Lemma~\ref{lem_hadamard}(iii), each such grouped
block is again an admissible pair, and its digit set has cardinality
strictly smaller than its scaling factor.  Therefore, without loss of generality, we may assume that $\#B_k<N_k$ for all $k\in\N_+$.

For each $k\ge1$, compose the $(2k-1)$-th and $2k$-th admissible pairs
using Lemma~\ref{lem_hadamard}(iii). By the construction above, each
resulting composed pair admits at least two admissible choices for the
spectrum set $L$. Choosing one of these two options independently for every
$k$ produces a different limit $\Lambda=\bigcup_j\Lambda_j$  
for each infinite binary sequence of choices. Hence there are uncountably
many subsets $\Lambda\subset\mathbb{Z}$ such that $t\Lambda$ is a spectrum
of $\mu$ for every $t\in\mathcal{E}$.
\end{proof}

\section{Random spectral measures}\label{sec_Random spectral measures}
The equi-positivity condition is highly technical. To overcome it, An, Fu and Lai \cite{An-Fu-Lai-2019} come up with admissible families which guarantee the existence of equi-positive families. Inspired by this idea, we introduce  $t$-admissible families to study the spectrality of random measures.

 Given $\mu \in \mathcal{P}(\mathbb{R})$ and $t\in\mathbb{N}_+$, we define the {\it $t$-integral periodic zero set} of $\mu$ by
\begin{equation*}\label{def_t-integral periodic zero set}
	\mathcal{Z}_t(\mu) = \{\xi \in [0,t) : \hat{\mu}(\xi + tk) = 0 \text{ for all } k \in \mathbb{Z}\}.
\end{equation*}

\begin{definition}\label{def_taf}
Given  $\Phi \subseteq \mathcal{P}(\mathbb{R})$ and $t\in\mathbb{N}_+$, we say that $\Phi$ is a \textit{$t$-admissible family} if $\mathcal{Z}_{t}(\mu) = \emptyset$ holds for all $\mu \in \operatorname{cl}(\Phi)$,   where $\operatorname{cl}(\Phi)$ is the closure of $\Phi$ with respect to the weak topology.
\end{definition}

The next lemma states that the Fourier transforms of a tight family of probability measures are  equicontinuous; see \cite{Bil03}.
\begin{lemma}\label{lem_tight-equicontinuous}
Let $\Phi \subseteq \mathcal{P}(\mathbb{R})$. If $\Phi$ is tight, then the collection $\{\hat{\mu} : \mu \in \Phi\}$ is equicontinuous.
\end{lemma}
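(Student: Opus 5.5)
The plan is to prove the standard fact that a tight family has uniformly equicontinuous Fourier transforms. Tightness gives uniform control of the mass outside a large compact set, and on that compact set the exponential $e^{-2\pi i \xi x}$ is uniformly Lipschitz in $\xi$. Concretely, fix $\epsilon>0$. By tightness of $\Phi$, choose $R>0$ such that $\mu(\mathbb{R}\setminus[-R,R])<\epsilon/4$ for every $\mu\in\Phi$.

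For $\xi,\xi'\in\mathbb{R}$ and $\mu\in\Phi$, split the integral over $[-R,R]$ and its complement:
\[
|\hat{\mu}(\xi)-\hat{\mu}(\xi')|\le \int_{[-R,R]}\bigl|e^{-2\pi i \xi x}-e^{-2\pi i \xi' x}\bigr|\D\mu(x)+2\mu(\mathbb{R}\setminus[-R,R]).
\]
On the complement, use the trivial bound $|e^{-2\pi i\xi x}-e^{-2\pi i\xi' x}|\le 2$, which contributes less than $\epsilon/2$. On $[-R,R]$, use $|e^{i a}-e^{i b}|\le |a-b|$ to bound the integrand by $2\pi R|\xi-\xi'|$. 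Since $\mu$ is a probability measure, this part contributes at most $2\pi R|\xi-\xi'|$.

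Taking $\delta=\epsilon/(4\pi R)$, every $\mu\in\Phi$ satisfies $|\hat{\mu}(\xi)-\hat{\mu}(\xi')|<\epsilon$ whenever $|\xi-\xi'|<\delta$. Because $\delta$ depends only on $\epsilon$ and the tightness radius $R$, and not on $\mu$ or on the base point, this gives uniform equicontinuity of $\{\hat{\mu}:\mu\in\Phi\}$ on $\mathbb{R}$, which is stronger than what is claimed.

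There is no genuine obstacle here. The only point that needs care is that $R$ is chosen uniformly over $\Phi$, and this is exactly what tightness supplies. No earlier result of the paper is needed beyond the definition of tightness and the elementary inequality $|e^{ia}-e^{ib}|\le|a-b|$.
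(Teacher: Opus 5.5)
Your proof is correct. Tightness gives a radius $R$ that works uniformly over $\Phi$, and splitting over $[-R,R]$ and its complement then yields $|\hat\mu(\xi)-\hat\mu(\xi')|<\epsilon$ whenever $|\xi-\xi'|<\epsilon/(4\pi R)$, uniformly in $\mu\in\Phi$ and in the base point.

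The paper gives no proof of its own; it simply cites \cite{Bil03} for this fact, and your argument is the standard textbook proof. The uniform version you obtain is the one the paper actually needs in the proof of Theorem~\ref{thm_admissible-equi-positive}, where a single $\delta_x$ must work for all $y_1,y_2$ with $|y_1-y_2|<\delta_x$.
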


The following theorem illustrates that $t$-admissibility guarantees $t$-equi-positivity under the tightness condition. 
\begin{theorem}\label{thm_admissible-equi-positive}
Assume $\Phi \subseteq \mathcal{P}(\mathbb{R})$ is tight. If $\Phi$ is a $t$-admissible family, then $\Phi$ is  a $t$-equi-positive family.
\end{theorem}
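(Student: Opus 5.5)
We argue by contradiction, using compactness of the weak closure. Since $\Phi$ is tight, so is $\operatorname{cl}(\Phi)$. By Theorem~\ref{thm_tight-subsequence weak converge}, $\operatorname{cl}(\Phi)$ is sequentially compact in the weak topology, and by Lemma~\ref{lem_tight-equicontinuous} the family $\{\hat\mu:\mu\in\operatorname{cl}(\Phi)\}$ is equicontinuous on $\R$.

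The first step is a uniform pointwise lower bound. We claim there is $\epsilon_1>0$ such that for every $\mu\in\operatorname{cl}(\Phi)$ and every $x\in[0,t]$ there is $k\in\Z$ with $|\hat\mu(x+tk)|\ge\epsilon_1$; for $x=0$ we may take $k=0$, since $\hat\mu(0)=1$. Suppose the claim fails. Then there are $\mu_n\in\operatorname{cl}(\Phi)$ and $x_n\in[0,t]$ with $\sup_{k\in\Z}|\hat\mu_n(x_n+tk)|<1/n$. Pass to a subsequence with $\mu_n\to\mu$ weakly, where $\mu\in\operatorname{cl}(\Phi)$, and $x_n\to x\in[0,t]$. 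Fix $k$. By Theorem~\ref{thm_weak converge}, $\hat\mu_n\to\hat\mu$ pointwise, and equicontinuity upgrades this to uniform convergence on compacts. Hence
\[
|\hat\mu(x+tk)|\le |\hat\mu(x+tk)-\hat\mu_n(x+tk)|+|\hat\mu_n(x+tk)-\hat\mu_n(x_n+tk)|+|\hat\mu_n(x_n+tk)|\to0.
\]
So $\hat\mu(x+tk)=0$ for all $k\in\Z$. Reducing $x$ modulo $t$ (the case $x=t$ is the same as $x=0$) gives $x \bmod t\in\mathcal Z_t(\mu)$. This contradicts $t$-admissibility.

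The second step passes from points to neighbourhoods. By equicontinuity, choose $\delta_0>0$ such that $|\hat\mu(\xi)-\hat\mu(\xi')|<\epsilon_1/2$ whenever $|\xi-\xi'|<\delta_0$, for all $\mu\in\Phi$. For $x\in[0,t)$ and $\mu\in\Phi$, let $k_{x,\mu}$ be the integer from the first step, with $k_{0,\mu}=0$. Then for $|y|<\delta_0$,
\[
|\hat\mu(x+y+tk_{x,\mu})|\ge\epsilon_1-\tfrac{\epsilon_1}{2}=\tfrac{\epsilon_1}{2}.
\]
This is exactly the $t$-equi-positivity of Definition~\ref{def_equi_measure} with $\epsilon_0=\epsilon_1/2$.

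The main obstacle is the first step. There the index $k$ may depend on $n$, so the contradiction must be organised so that no single $k$ has to be tracked along the sequence. Assuming that the supremum over all $k$ tends to $0$ handles this, because it then suffices to prove $\hat\mu(x+tk)=0$ for each fixed $k$ separately. The remaining care is to confirm that uniform convergence on compacts follows from equicontinuity together with pointwise convergence, which is a standard Arzelà–Ascoli-type argument.
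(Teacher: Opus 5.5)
Your proof is correct, but it organises the compactness differently from the paper. The paper fixes a single point $x_0\in[0,t]$ and argues by contradiction to get a constant $\varepsilon_x$ that depends on $x$. That step needs only Theorem~\ref{thm_tight-subsequence weak converge} and pointwise convergence of Fourier transforms, with no equicontinuity. The paper then uses equicontinuity to get a radius $\delta_x$ for each $x$. It extracts a finite subcover of $[0,t]$ by the intervals $U(x_j,\delta_{x_j}/2)$ via Heine--Borel and takes minima of the finitely many constants. Finally it treats $x=0$ separately to enforce $k_{0,\mu}=0$.

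You instead let the point vary together with the measure in the contradiction argument. The compactness of $[0,t]$ is thus absorbed into the sequential compactness of $\operatorname{cl}(\Phi)\times[0,t]$. The price is that equicontinuity must already be used inside the contradiction step, to control $\hat{\mu}_n(x_n+tk)-\hat{\mu}_n(x+tk)$. The gain is a uniform $\epsilon_1$ at once, a single $\delta_0$ from one application of uniform equicontinuity, and no covering argument. The normalisation $k_{0,\mu}=0$ also comes for free, since $\hat{\mu}(0)=1\ge\epsilon_1$.

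Two small remarks. First, you assert without proof that $\operatorname{cl}(\Phi)$ is tight. This follows from Theorem~\ref{thm_portmanteau}(iii) applied to the compact sets witnessing tightness of $\Phi$. You can also avoid it altogether by taking $\mu_n\in\Phi$: the claim is only used for $\mu\in\Phi$, and the weak limit automatically lies in $\operatorname{cl}(\Phi)$. Second, the upgrade to uniform convergence on compacts that you flag at the end is not needed. Your three-term estimate uses only pointwise convergence at the fixed point $x+tk$ plus equicontinuity of $\{\hat{\mu}_n\}$.
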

	
\begin{proof}
First, we claim  for every point $x\in[0,t]$, there exists a constant $\varepsilon_x>0$ such that
\[
\sup\big\{ |\hat{\mu}(x + tk)| : k \in \mathbb{Z} \big\} > \varepsilon_x
\quad\text{holds uniformly for all } \mu \in \Phi.
\]
		
Suppose  there exists $x_0\in[0,t]$ such that for each integer $n\geq 1$, we choose a measure $\mu_n\in\Phi$ such that 
\[
\sup\big\{ |\hat{\mu}_n(x_0 +tk)| : k \in \mathbb{Z} \big\} \le \frac{1}{n}.
\]
Since  $\{\mu_n\}\subset\Phi$ and $\Phi$ is tight, by Theorem \ref{thm_tight-subsequence weak converge}, there exists a subsequence $\{\mu_{n_j}\}$ which converges weakly to some $\mu \in \mathcal{P}(\mathbb{R})$. By  Theorem \ref{thm_weak converge}, we obtain
		\[
		\hat{\mu}(x_0 + tk) = \lim_{j \to \infty} \hat{\mu}_{n_j}(x_0 + tk) = 0
		\quad\text{for all }k\in\mathbb{Z}.
		\]
		This yields $x_0\in\mathcal{Z}_t(\mu)$,  contradicting the fact that $\mathcal{Z}_t(\mu)=\emptyset$ since  $\Phi$ is $t$-admissible and $\mu\in\operatorname{cl}(\Phi)$. Hence the claim holds.
		
By the claim, for each $x\in[0,t]$, there exists a constant $\varepsilon_x>0$ such that for each $\mu\in\Phi$, we  choose   $k_{x,\mu}\in\mathbb{Z}$ satisfying
\begin{equation}\label{eq_epsilon}
|\hat{\mu}(x + tk_{x,\mu})| > \varepsilon_x.
\end{equation}

By Lemma \ref{lem_tight-equicontinuous}, the family $\{\hat{\mu}:\mu\in\Phi\}$ is equicontinuous, and for  each $\varepsilon_x>0$, there exists $\delta_x>0$ such that
		\[
		|\hat{\mu}(y_1)-\hat{\mu}(y_2)| < \frac{\varepsilon_x}{2}
		\]
		for all $\mu\in\Phi$ and   $y_1,y_2\in\mathbb{R}$ satisfying $|y_1-y_2|<\delta_x$. For each $\mu\in\Phi$ and $|y|<\delta_x$, we have
		\begin{equation}\label{eq_admissible-equi-positive_1}
			\begin{aligned}
				|\hat{\mu}(x + y + tk_{x,\mu})|
				&\ge \big|\hat{\mu}(x + tk_{x,\mu})\big| - \big|\hat{\mu}(x + y + tk_{x,\mu}) - \hat{\mu}(x + tk_{x,\mu})\big| \\
				&> \varepsilon_x - \frac{\varepsilon_x}{2} = \frac{\varepsilon_x}{2}.
			\end{aligned}
		\end{equation}
		
Since the collection $\{U(x,\delta_x/2)\}_{x\in[0,t]}$ is  an open cover of $[0,t]$, by the Heine--Borel  theorem, there exist finitely many points $x_1<x_2<\dots<x_p\in[0,t]$ such that  
		\[
		[0,t] \subseteq \bigcup_{j=1}^p U\big(x_j, \tfrac{\delta_{x_j}}{2}\big).
		\]
Let
		\[
		\varepsilon = \min\Big\{ \frac{\varepsilon_{x_j}}{2} : 1\le j\le p \Big\},
		\qquad
		\delta = \min\Big\{ \frac{\delta_{x_j}}{2} : 1\le j\le p \Big\}.
		\]
For each $x\in(0,t]$, there exists  $j\in\{1,2,\dots,p\}$ such that $x\in U(x_j,\delta_{x_j}/2)$. For each $\mu\in\Phi$ and $|y|<\delta$, we have $|x-x_j+y|<\delta_{x_j}$. Combining this with  \eqref{eq_admissible-equi-positive_1}, it follows that 
		\[
		|\hat{\mu}\big(x + y + tk_{x_j,\mu}\big)|
		= \Big|\hat{\mu}\big(x_j + (x-x_j+y) + tk_{x_j,\mu}\big)\Big|
		\ge \frac{\varepsilon_{x_j}}{2} \ge \varepsilon.
		\]
		We   set $k_{x,\mu}:=k_{x_j,\mu}$ for  $x\in(0,t]$. 
		
To define $k_{0,\mu}$ for $x=0$, given $|y|<\delta$,  we have
$|y|<\delta_{x_1}$. From \eqref{eq_epsilon} we have $\varepsilon_{x_1}\le 1$ since
$|\widehat{\mu}(\xi)|\le 1$ for all $\xi$.  Since  $\varepsilon \le \frac{\varepsilon_{x_1}}{2}$, $\widehat{\mu}(0)=1$ and 
$\{\widehat{\mu} : \mu\in\Phi\}$ is equicontinuous, we obtain
		\[
		|\hat{\mu}(y)|
		\ge |\hat{\mu}(0)| - |\hat{\mu}(0)-\hat{\mu}(y)|
		\ge 1 - \frac{\varepsilon_{x_1}}{2} \ge \varepsilon.
		\]
Choosing $k_{0,\mu}:=0$ completes the proof.  
	\end{proof}

Given  admissible pairs $\{(N_k, B_k)\}_{k=1}^m$ in $\mathbb{R}$ and a sequence $\bn$ of bounded positive integers. Recall  that  
\begin{equation} \label{def_mubn2}
	\mu^{\mathbf{n}}(\bw) = \delta_{N_{\omega_{1}}^{-n_{1}}B_{\omega_{1}}} * \delta_{N_{\omega_{1}}^{-n_{1}}N_{\omega_{2}}^{-n_{2}}B_{\omega_{2}}} * \cdots .
\end{equation}
and 
\begin{equation} \label{def_rm2}
	\mu(\bw) =		\mu^\bn(\bw) = \delta_{N_{\omega_{1}}^{-1}B_{\omega_{1}}} * \delta_{N_{\omega_{1}}^{-1}N_{\omega_{2}}^{-1}B_{\omega_{2}}} * \cdots,
\end{equation}
are random measures.    Recall that 
\begin{equation} \label{def_em2}
\mathcal{E}_m=\{t\in\N_+|\gcd(t,N_k)=1,\ \text{for all} \ 1 \leq k \leq m\}.
\end{equation}

Given $f:\mathbb{R}\to\mathbb{C}$, we define the zero set of $f$ as
\[
\mathcal{O}(f) = \{ x \in \mathbb{R} : f(x) = 0 \}.
\]
The following   properties  of zero sets play a key role in our proof. 

\begin{lemma}\label{lem_zero-set-local-finite}
Let $\mu$ be given by \eqref{def_rm2}.  For every $h > 0$, we have 
\[
\#\Big([-h, h] \cap \Big( \bigcup_{\bw \in \Omega} \mathcal{O}(\widehat{\mu(\bw)}) \Big) \Big)<\infty. 
\]
\end{lemma}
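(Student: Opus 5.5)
The plan is to write the Fourier transform of $\mu(\bw)$ as an infinite product of the mask functions $M_{B_k}$ from \eqref{def_FMB}. By \eqref{def_rm2}, Theorem~\ref{thm_weak converge} and the continuity of convolution under weak limits,
\[
\widehat{\mu(\bw)}(\xi)=\prod_{j=1}^{\infty} M_{B_{\omega_j}}\bigl((N_{\omega_1}N_{\omega_2}\cdots N_{\omega_j})^{-1}\xi\bigr),
\]
and the product converges, since each factor is $1+O(|\xi|/2^j)$ uniformly in $\bw$. Indeed $M_B(0)=1$, $M_B$ is Lipschitz with constant $2\pi\max_{b\in B}|b|$, and $N_{\omega_1}\cdots N_{\omega_j}\ge 2^j$. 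Hence $\widehat{\mu(\bw)}(\xi)=0$ if and only if some factor vanishes, that is,
\[
\mathcal{O}(\widehat{\mu(\bw)})=\bigcup_{j\ge1} N_{\omega_1}\cdots N_{\omega_j}\,\mathcal{O}(M_{B_{\omega_j}}).
\]

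The key structural input is that each $\mathcal{O}(M_{B_k})$ is contained in a finite union of translates of $\Z$ (or a lattice), and is bounded away from $0$. Because $B_k\subset\Z$, the function $M_{B_k}$ is a $1$-periodic trigonometric polynomial, not identically zero since $M_{B_k}(0)=1$. So in $[0,1)$ it has only finitely many zeros, and the zero set is $Z_k+\Z$ with $Z_k\subset(0,1)$ finite. Let $\eta=\min_k \operatorname{dist}(\mathcal{O}(M_{B_k}),0)>0$. This is positive because $Z_k$ is finite and avoids $0$, and periodicity then keeps all zeros at distance at least $\min(\min Z_k, 1-\max Z_k)$ from $0$.

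Next I bound the two parameters of a zero. Suppose $\xi\in[-h,h]$ lies in $N_{\omega_1}\cdots N_{\omega_j}\mathcal{O}(M_{B_{\omega_j}})$. Then $|\xi|\ge N_{\omega_1}\cdots N_{\omega_j}\,\eta\ge 2^j\eta$, so $j\le J:=\log_2(h/\eta)$. Moreover $\xi=N\zeta$ with $N$ in the finite set $\mathcal{N}_J=\{N_{a_1}\cdots N_{a_j}: j\le J,\ a_i\in\{1,\dots,m\}\}$ and $\zeta\in\mathcal{O}(M_{B_k})\cap[-h,h]$ for some $k$. Each $\mathcal{O}(M_{B_k})\cap[-h,h]$ is finite by periodicity and finiteness of $Z_k$. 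Therefore the union over all $\bw$ intersected with $[-h,h]$ lies in the finite set
\[
\bigcup_{N\in\mathcal{N}_J}\bigcup_{k=1}^m N\bigl(\mathcal{O}(M_{B_k})\cap[-h,h]\bigr).
\]
This proves the claim.

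The main obstacle is justifying that the zero set of the infinite product equals the union of the zero sets of its factors, i.e.\ that the tail product never vanishes. For this I would use the uniform estimate: for $|\xi|\le h$ and $j$ large enough (depending only on $h$ and $\max|b|$), $|M_{B_{\omega_j}}((N_{\omega_1}\cdots N_{\omega_j})^{-1}\xi)-1|\le C h 2^{-j}<1/2$. The tail product is then bounded below by $\prod_j(1-Ch2^{-j})>0$, which gives the equivalence. A side point is whether the $B_k$ must be integer sets. Admissible pairs have $B\subset\Z$ by definition, so periodicity is available.
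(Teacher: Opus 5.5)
Your proof is correct and follows essentially the same route as the paper. You factor $\widehat{\mu(\bw)}$ as a product of the masks $M_{B_{\omega_j}}$, use that each $\mathcal{O}(M_{B_k})$ is a discrete $1$-periodic set avoiding $0$ to bound how many scalings $N_{\omega_1}\cdots N_{\omega_j}$ can place zeros in $[-h,h]$, and finish by finiteness of each rescaled zero set there. Your uniform lower bound on the tail product is a useful addition, since the paper asserts without justification that a zero of the infinite product must be a zero of some factor.
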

\begin{proof} 
For each $j\in\{1,\dots,m\}$, since $B_j\subset\mathbb Z$, by \eqref{def_FMB}, we have that $M_{B_j}$ is $1$-periodic, i.e., $M_{B_j}(\xi+1)=M_{B_j}(\xi)$. Since  $M_{B_j}(0)=1$, we have the zero set
\[
Z_j:=\mathcal{O}(M_{B_j})
\]
is a discrete $1$-periodic subset of $\mathbb R$ with $0\notin Z_j$, that is $Z_j+1=Z_j$. Let
\[
a = \min\bigl\{|z| : z\in Z_1\cup\cdots\cup Z_m \bigr\}>0 , \quad \delta = \frac{1}{2}\min\bigl\{|z_1-z_2| : z_1\neq z_2\in Z_1\cup\cdots\cup Z_m \bigr\}>0 .
\]

For each $\bw=(\omega_k)_{k=1}^{\infty}\in\Omega$,  we have that 
\[
\widehat{\mu(\bw)}(\xi)=\prod_{k=1}^{\infty}
M_{B_{\omega_k}}\!\Bigl(\frac{\xi}{N_{\omega_1}N_{\omega_2}\cdots N_{\omega_k}}\Bigr).
\]
If $\widehat{\mu(\bw)}(\xi)=0$, then  
\[
\mathcal{O}(\widehat{\mu(\bw)}) \subseteq \bigcup_{k=1}^{\infty}
N_{\omega_1}N_{\omega_2}\cdots N_{\omega_k}\, Z_{\omega_k}.
\]
and we have 
\begin{equation}\label{UOW}
\bigcup_{\bw\in\Omega}\mathcal{O}(\widehat{\mu(\bw)})
\subseteq \bigcup_{j=1}^{m}\;
\bigcup_{k_1,\dots,k_m\ge 0}\;
N_1^{k_1}N_2^{k_2}\cdots N_m^{k_m}\, Z_j . 
\end{equation}

Fix $h>0$.  For every $x\in [-h, h] \cap ( \bigcup_{\bw \in \Omega} \mathcal{O}(\widehat{\mu(\bw)}) )$, we write  $x = N_1^{k_1}\cdots N_m^{k_m} z$ for some
$j\in\{1,\dots,m\}$, some  integer $k_1,\dots,k_m$ and
some $z\in Z_j$.  Since $|z|\ge a$ and each $N_i\ge 2$, we obtain
\begin{equation}\label{k1_km}
2^{\,k_1+\cdots+k_m}\le N_1^{k_1}\cdots N_m^{k_m}
= \frac{|x|}{|z|}\le \frac{h}{a}.
\end{equation}
Hence the total exponent $k_1+\cdots+k_m$ is bounded by $\log_2(h/a)$, and it implies that there exist only  finitely many $m$-tuples $(k_1,\dots,k_m)$ satisfying \eqref{k1_km}.  For each such  $m$-tuple $(k_1,\dots,k_m)$, the set $N_1^{k_1}\cdots N_m^{k_m}\, Z_j$ is discrete and
its elements are separated by at least $ N_1^{k_1}\cdots N_m^{k_m}  \delta,
$
so  the set $[-h,h]\cap N_1^{k_1}\cdots N_m^{k_m}\, Z_j$ is finite.
By \eqref{UOW}, the conclusion holds.
\end{proof}

\begin{proposition}\label{pro_finite_zero set}
 Let $\mu$ and $\mathcal{E}_m$ be given by \eqref{def_rm2} and  \eqref{def_em2} respectively. 
For every $\bw$ such that $\mu(\bw)$ is singular to the Lebesgue measure, we have $\mathcal{Z}_t\big(\mu(\bw)\big)=\emptyset$ for all $t\in \mathcal{E}_m$.
\end{proposition}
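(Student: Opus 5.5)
The plan is to argue by contradiction. Fix $\bw$ with $\mu(\bw)$ singular, fix $t\in\mathcal{E}_m$, and suppose some $\xi\in[0,t)$ lies in $\mathcal{Z}_t(\mu(\bw))$, i.e.\ $\widehat{\mu(\bw)}(\xi+tn)=0$ for all $n\in\mathbb{Z}$. Write $Q_k=N_{\omega_1}\cdots N_{\omega_k}$ and $\mu(\bw)=\mu_k*\mu_{>k}$ as in \eqref{def_mun}, \eqref{def_mugn}. Then
\[
\widehat{\mu(\bw)}(\xi+tn)=\widehat{\mu_k}(\xi+tn)\,\widehat{\nu_{>k}}\bigl(Q_k^{-1}(\xi+tn)\bigr).
\]
Two facts drive the argument. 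First, $\mu_k$ is supported in $Q_k^{-1}\mathbb{Z}$, so $\widehat{\mu_k}$ is $Q_k$-periodic. Second, since $\gcd(t,N_j)=1$ for every $j$, we have $\gcd(t,Q_k)=1$, so $\{tn:0\le n<Q_k\}$ runs over a complete residue system modulo $Q_k$.

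The first key step is the Parseval-type identity for the digit sets. As shown in the proof of Lemma~\ref{lem_singular}, the digits of $\mu_k$ are distinct modulo $Q_k$. Hence
\[
\sum_{n=0}^{Q_k-1}\bigl|\widehat{\mu_k}(\xi+tn)\bigr|^2=\sum_{l=0}^{Q_k-1}\bigl|\widehat{\mu_k}(\xi+l)\bigr|^2=\frac{Q_k}{\#B_{\omega_1}\cdots\#B_{\omega_k}} .
\]
In particular, for each $k$ there is some $n_k\in\{0,\dots,Q_k-1\}$ with $|\widehat{\mu_k}(\xi+tn_k)|\ge c>0$ on a definite proportion of residues. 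Ideally I would find such $n_k$ with $|\widehat{\mu_k}(\xi+tn_k)|$ bounded below uniformly in $k$. The zero assumption then forces $\widehat{\nu_{>k}}\bigl(Q_k^{-1}(\xi+tn_k)\bigr)=0$, with argument $Q_k^{-1}(\xi+tn_k)$ lying in a bounded window $[0,t+1)$.

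Next I pass to the limit. The family $\{\nu_{>k}\}$ is tight, since all supports lie in a fixed compact set. So along a subsequence $\nu_{>k_i}\to\nu$ weakly and $Q_{k_i}^{-1}(\xi+tn_{k_i})\to\eta\in[0,t+1]$. By Theorem~\ref{thm_weak converge} and equicontinuity (Lemma~\ref{lem_tight-equicontinuous}), $\widehat\nu(\eta)=0$. More usefully, I would not fix a single $n_k$ but keep all residues: the zero assumption says the whole sum $\sum_n|\widehat{\mu_k}(\xi+tn)|^2|\widehat{\nu_{>k}}(Q_k^{-1}(\xi+tn))|^2$ vanishes. Since each $\nu_{>k}$ is again an infinite convolution of the same finitely many types, its zeros lie in the locally finite set of Lemma~\ref{lem_zero-set-local-finite}. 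The points $Q_k^{-1}(\xi+tn)$, $0\le n<Q_k$, are $t/Q_k$-dense in $[0,t)$. Local finiteness therefore says that only a bounded number $C(t)$ of them can be zeros of $\widehat{\nu_{>k}}$, with $C(t)$ independent of $k$. Consequently $\widehat{\mu_k}$ must vanish at all but $C(t)$ of the residues.

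The contradiction then comes from combining this with singularity. By Lemma~\ref{lem_singular} (with Lemma~\ref{lem_hadamard'}), $\#B_{\omega_j}<N_{\omega_j}$, hence $\#B_{\omega_j}\le N_{\omega_j}/2$, for infinitely many $j$. So $Q_k/(\#B_{\omega_1}\cdots\#B_{\omega_k})\to\infty$. Since $|\widehat{\mu_k}|\le1$, the Parseval identity forces $\widehat{\mu_k}(\xi+tn)\neq0$ for at least $Q_k/(\#B_{\omega_1}\cdots\#B_{\omega_k})$ residues $n$. This exceeds $C(t)$ for large $k$, which is the desired contradiction.

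I expect the main obstacle to be the uniform bound $C(t)$. It requires that the zeros of $\widehat{\nu_{>k}}$ in $[0,t+1]$ are uniformly finite in number over all shifts $\bw$, which is exactly Lemma~\ref{lem_zero-set-local-finite} applied to shifted sequences. One must check that the lemma's union over all $\bw\in\Omega$ covers the shifted measures, which it does.
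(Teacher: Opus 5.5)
Your proof is correct, and it takes a genuinely different route from the paper's.

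\textbf{The paper's route.} It argues dynamically. It pulls the zero back through the inverse branches $\psi_{\ell,\omega_n}(x)=N_{\omega_n}^{-1}(x+t\ell)$, keeping only branches on which $M_{B_{\omega_n}}$ does not vanish, to build finite sets $X_n\subseteq\mathcal{Z}_t(\mu(\sigma^n\bw))$. Lemma~\ref{lem_gcd-Hadamard} (the scaled triple $(N,B,tL)$) guarantees that every point has at least one child, so $\#X_n$ is non-decreasing, and Lemma~\ref{lem_zero-set-local-finite} bounds it. At a stable level with $\#B_{\omega_n}<N_{\omega_n}$, uniqueness of the child makes $t\{0,\dots,N_{\omega_n}-1\}$ an orthonormal set of $N_{\omega_n}$ exponentials in the $\#B_{\omega_n}$-dimensional space $L^2(\delta_{N_{\omega_n}^{-1}B_{\omega_n}})$, a contradiction.

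\textbf{Your route.} You freeze a level $k$ and count all $Q_k$ residues at once. Parseval on $\mathbb{Z}/Q_k\mathbb{Z}$ together with $\gcd(t,Q_k)=1$ gives $\sum_{n=0}^{Q_k-1}|\widehat{\mu_k}(\xi+tn)|^2=Q_k/\prod_{j\le k}\#B_{\omega_j}$. Lemma~\ref{lem_zero-set-local-finite}, applied to $\nu_{>k}=\mu(\sigma^k\bw)$ at the $Q_k$ distinct points $Q_k^{-1}(\xi+tn)\in[0,t)$, leaves at most $C(t)$ non-zero terms.

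\textbf{How the two relate.} The indices $n$ with $\widehat{\mu_k}(\xi+tn)\neq 0$ correspond exactly to the points of the paper's $X_k$. To see this, write $n=\ell_1+N_{\omega_1}\ell_2+\cdots$ in mixed radix and use the $1$-periodicity of each $M_B$. So your Parseval identity is a quantitative replacement for the paper's ``at least one child, then stabilise'' step: it gives $\#X_k\ge Q_k/\prod_{j\le k}\#B_{\omega_j}\to\infty$ directly.

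\textbf{What each buys.} Your version is shorter and needs less:
\begin{itemize}
\item no appeal to Lemma~\ref{lem_gcd-Hadamard};
\item no normalisation $0\in B_k$;
\item not even Lemma~\ref{lem_hadamard'}, since only finitely many pairs occur and so $\min\{N_j/\#B_j:\#B_j<N_j\}>1$.
\end{itemize}
The price is that your key step, that $\{tn\}_{0\le n<Q_k}$ is a complete residue system modulo $Q_k$, uses $\gcd(t,N_j)=1$ essentially. The paper's $X_n$ construction, by contrast, is reused in the proof of Theorem~\ref{thm_finite_consecutive}, where only $\gcd(t,b_k)=1$ is assumed.

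\textbf{Two presentational points.}
\begin{itemize}
\item The proof of Lemma~\ref{lem_singular} only shows that the digits of $\mu_k$ are distinct as numbers. Your Parseval identity needs them distinct modulo $Q_k$. This follows by reducing modulo $N_{\omega_k}$ and inducting, using that each $B_j$ consists of distinct residues modulo $N_j$ (equal residues would give equal rows of the unitary matrix).
\item The exploratory middle part plays no role and should be deleted: the single $n_k$ with a uniform lower bound, and the weak limit $\nu$ with $\widehat{\nu}(\eta)=0$.
\end{itemize}
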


\begin{proof}	
Since $\widehat{\mu * \delta_a}(\xi) = e^{-2\pi i a \xi} \widehat{\mu}(\xi)$ for each $a\in\mathbb{R}$, it is clear that  $\mathcal{Z}_t(\mu) = \mathcal{Z}_t(\mu * \delta_a)$, which means that the $t$-integral periodic zero set is invariant under translation. Therefore, we assume $0 \in B_k$ for all $1 \leq k \leq m$. By Lemma \ref{lem_hadamard} (i) and (ii), for each $1\le j\le m$, there is $L_j \subseteq \{0,1,\dots,N_j-1\}$ containing $0$ such that $(N_j,B_j,L_j)$ is a Hadamard triple.
	
By Lemma \ref{lem_singular}, it suffices to prove that for every $\bw \in \Omega$ with $\#\{k\in\mathbb{N}_+:\# B_{\omega_k}< N_{\omega_k}\}=\f$, we have $\mathcal{Z}_t(\mu(\bw))=\emptyset$ for all $t\in \mathcal{E}_m$. We prove it by contradiction. 

 Assume that there exist $t\in \mathcal{E}_m$ and $\bw \in \Omega$ with $\#\{k\in\N_+:\# B_{\omega_k}< N_{\omega_k}\}=\infty$ such that $\mathcal{Z}_t\big(\mu(\bw)\big)\ne\emptyset$. For each $j$ and $\ell\in\{0,1,\dots,N_j-1\}$, define
\[
\psi_{\ell,j}(x) = N_j^{-1}(x + t\ell).
\]
Fix  $\xi_0 \in \mathcal{Z}_t\big(\mu(\bw)\big)$ and set $X_0 = \{\xi_0\}$. For $n\ge 1$, we inductively define
\begin{equation}\label{def_Xn}
X_n = \left\{ \psi_{\ell,\omega_n}(\xi) : \xi \in X_{n-1},\ \ell \in \{0,1,\dots,N_{\omega_n}-1\},\ M_{B_{\omega_n}}(\psi_{\ell,\omega_n}(\xi)) \neq 0 \right\}. 
\end{equation}

Since $t\in\mathcal{E}_m$, we have $\gcd(t,N_k)=1$ for each $k$. By Lemma \ref{lem_gcd-Hadamard}, $t$ is an admissible eigenvalue for every $(N_k,B_k)$, so $(N_k,B_k,tL_k)$ is a Hadamard triple for all $k$. By Theorem \ref{thm_Q} (ii), we obtain
\[
\sum_{\ell \in L_{\omega_n}} |M_{B_{\omega_n}}(\psi_{\ell,\omega_n}(\xi))|^2 = 1
\]
for all $\xi\in \R$. In particular, for each $\xi\in X_{n-1}$, there exists at least one element $\ell\in L_{\omega_n}$ such that $M_{B_{\omega_n}}(\psi_{\ell,\omega_n}(\xi))\neq 0$.
	
First, note that distinct finite sequences of indices produce distinct points under iteration. Indeed, suppose two different sequences $(\ell_1,\dots,\ell_n)\neq (\ell_1',\dots,\ell_n')$ yield the same point. Then 
\[
\ell_1 + N_{\omega_1}\ell_2 + \cdots + N_{\omega_1}\cdots N_{\omega_{n-1}}\ell_n 
= \ell_1' + N_{\omega_1}\ell_2' + \cdots + N_{\omega_1}\cdots N_{\omega_{n-1}}\ell_n'.
\]
Let $j_0=\min\{j:\ell_j\neq\ell_j'\}$. We have  $N_{\omega_{j_0}}\mid (\ell_{j_0}-\ell_{j_0}')$, which contradicts the fact that $\ell_{j_0},\ell_{j_0}'\in\{0,1,\dots,N_{\omega_{j_0}}-1\}$.   Hence the map from sequences to points is injective, and we obtain the inequality
\[
\# X_{n-1} \le \# X_n \qquad\text{for all } n\ge 1.
\]

Next, we prove  $X_n \subseteq \mathcal{Z}_t\big(\mu\big(\sigma^n(\bw)\big)\big)$ for all $n\ge 0$ by induction, where $\sigma$ stands for the left shift operator on  $\Omega$, i.e. $\sigma(\bw)=\omega_2\omega_3\cdots$. The case $n=0$ holds trivially by the choice of $\xi_0$. Assume  $X_{n-1} \subseteq \mathcal{Z}_t\big(\mu\big(\sigma^{n-1}(\bw)\big)\big)$ holds. By \eqref{def_Xn}, for  every   $\psi_{\ell,\omega_n}(\xi) \in X_n$ where $\xi\in X_{n-1}$ and $M_{B_{\omega_n}}\big(\psi_{\ell,\omega_n}(\xi)\big)\neq 0$, it is clear that  $\widehat{\mu\big(\sigma^{n-1}(\bw)\big)}(\xi+tk)=0$ for all $k\in\mathbb{Z}$,  and  we obtain
\[
\begin{aligned}
0 &= \widehat{\mu\big(\sigma^{n-1}(\bw)\big)}(\xi + t\ell + tN_{\omega_n}k) \\
&= M_{B_{\omega_n}}\big(\psi_{\ell,\omega_n}(\xi)+tk\big) \widehat{\mu\big(\sigma^{n}(\bw)\big)}\big(\psi_{\ell,\omega_n}(\xi)+tk\big) \\
&= M_{B_{\omega_n}}\big(\psi_{\ell,\omega_n}(\xi)\big) \widehat{\mu\big(\sigma^{n}(\bw)\big)}\big(\psi_{\ell,\omega_n}(\xi)+tk\big).
\end{aligned}
\]
It implies $\widehat{\mu\big(\sigma^{n}(\bw)\big)}\big(\psi_{\ell,\omega_n}(\xi)+tk\big)=0 $ for all $k\in\mathbb{Z}$, and we have $\psi_{\ell,\omega_n}(\xi)\in\mathcal{Z}_t(\mu(\sigma^n(\bw)))$.
	
Then, we show $\{\#X_n\}$ stabilises. For $\xi\in X_n$, we have 
\[
\xi = \frac{\xi_0 + t\ell_1 + tN_{\omega_1}\ell_2 + \cdots + tN_{\omega_1}\cdots N_{\omega_{n-1}}\ell_n}{N_{\omega_1}N_{\omega_2}\cdots N_{\omega_n}}.
\]
Since $\xi_0\in[0,t)$ and $0\le \ell_j<N_{\omega_j}$, we have $0\le\xi\le t$ and  $X_n\subseteq [-t,t]$ for all $n\ge 1$, and so 
\[
X_n \subseteq [-t,t] \cap \bigcup_{\boldsymbol{\eta}\in\Omega}\mathcal{O}\big(\widehat{\mu(\boldsymbol{\eta})}\big).
\]
By Lemma \ref{lem_zero-set-local-finite}, the set on the  right is  finite. Since $\{\#X_n\}$ is non-decreasing and bounded,   there exists $n_0>0$ such that $\#X_n=\#X_{n-1}$ for all $n>n_0$.

Finally, since  $\#\{k\in\N_+:\# B_{\omega_k}< N_{\omega_k}\}=\infty$, there exists $n>n_0$ such that $\#B_{\omega_n}<N_{\omega_n}$. Then for each $\xi\in X_{n-1}$, there exists a unique $\ell_0\in\{0,1,\dots,N_{\omega_n}-1\}$ such that $M_{B_{\omega_n}}(\psi_{\ell_0,\omega_n}(\xi))\neq 0$. Since
\[
\sum_{\ell\in L_{\omega_n}}\big|M_{B_{\omega_n}}(\psi_{\ell,\omega_n}(\xi))\big|^2=1,
\]
we   have $\ell_0\in L_{\omega_n}$ and 
\[
\left| M_{B_{\omega_{n}}}(\psi_{\ell_0,\omega_{n}}(\xi)) \right| = \Big| \frac{1}{\# B_{\omega_{n}}} \sum_{b \in B_{\omega_{n}}} e^{-2\pi i b \psi_{\ell_0,\omega_{n}}(\xi)} \Big| = 1.
\]
Since $0\in B_{\omega_n}$, we have $b\psi_{\ell_0,\omega_n}(\xi)\in\mathbb{Z}$ for all $b\in B_{\omega_n}$. For each $\ell\neq\ell_0$, we further obtain
\begin{align*}
\left| M_{B_{\omega_{n}}}\big(N_{\omega_{n}}^{-1}(t\ell-t\ell_0)\big) \right|
&=\Big| \frac{1}{\# B_{\omega_{n}}} \sum_{b \in B_{\omega_{n}}} e^{-2\pi i b \big(\psi_{\ell_0,\omega_{n}}(\xi)+N_{\omega_{n}}^{-1}(t\ell-t\ell_0)\big )} \Big|\\
&=\left| M_{B_{\omega_{n}}}(\psi_{\ell,\omega_{n}}(\xi)) \right|\\
&=0,
\end{align*}
which implies
\[
\big(t\{0,1,\dots,N_{\omega_n}-1\}-t\{0,1,\dots,N_{\omega_n}-1\}\big)\setminus\{0\}
\subset \mathcal{O}\big(\widehat{\delta}_{N_{\omega_n}^{-1}B_{\omega_n}}\big).
\]

Since $\gcd(t,N_{\omega_n})=1$,  we have 
$$
t\{0,1,\dots,N_{\omega_n}-1\}\equiv\{0,1,\dots,N_{\omega_n}-1\}\pmod {N_{\omega_n}},
$$
and  it follows that $\{e^{2\pi i\lambda x}:\lambda\in\{0,1,\dots,N_{\omega_n}-1\}\}$ forms an orthonormal set in $L^2(\delta_{N_{\omega_n}^{-1}B_{\omega_n}})$.  Since $(N_{\omega_n},B_{\omega_n},L_{\omega_n})$ is a Hadamard triple, we have
\[
\#B_{\omega_n}=\# L_{\omega_{n}}=\dim L^2(\delta_{N_{\omega_n}^{-1}B_{\omega_n}})\ge N_{\omega_n},
\]
which contradicts $\#B_{\omega_n}<N_{\omega_n}$. Hence the conclusion holds.         
\end{proof}

To prove that $\mu(\bw)^\bn$ is a $t$-equi-positive measure, we have to  consider the bounded and unbounded cases of the sequence $\mathbf{n}$ separately.
\begin{proposition}\label{pro_finite_equi-measure_bound}
Let $\mu^\mbf{n}$, $\mu$ and $\mathcal{E}_m$ be given by \eqref{def_mubn2}, \eqref{def_rm2} and \eqref{def_em2} respectively. Suppose $\mbf{n}$ is  bounded.
For all $\bw$ such that $\mu(\bw)$ is non-degenerate, $\mu(\bw)$ and $\mu^\mbf{n}(\bw)$ are $t$-equi-positive measures for all $t\in\mathcal{E}_m$.
\end{proposition}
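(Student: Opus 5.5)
The plan is to find a subsequence of the tail measures $\nu_{>n_j}$ that is tight and $t$-admissible in the sense of Definition~\ref{def_taf}, and then apply Theorem~\ref{thm_admissible-equi-positive}. I take $\mu(\bw)$ first and then $\mu^{\mathbf n}(\bw)$.

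\emph{The case $\mu(\bw)$.} The tail measure $\nu_{>k}$ associated with $\mu(\bw)$ is exactly $\mu(\sigma^k(\bw))$.
\begin{itemize}
\item All measures $\mu(\boldsymbol\eta)$, $\boldsymbol\eta\in\Omega$, are supported in one fixed compact set $K$, so the family $\{\mu(\sigma^k\bw)\}_k$ is tight.
\item Non-degeneracy gives a subsequence $\mu(\sigma^{n_j}\bw)$ converging weakly to a singular measure $\nu$.
\item By compactness of $\Omega$, I pass to a further subsequence with $\sigma^{n_j}\bw\to\boldsymbol\eta$. Lemma~\ref{lem_phicts} then gives $\nu=\mu(\boldsymbol\eta)$, so the limit $\mu(\boldsymbol\eta)$ is singular.
\end{itemize}
The weak closure of $\Phi=\{\mu(\sigma^{n_j}\bw)\}_j$ consists of the members of $\Phi$ together with $\mu(\boldsymbol\eta)$; every other sequence from $\Phi$ is eventually a subsequence of the convergent one. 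So it suffices that each of these measures is singular.

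For $\mu(\boldsymbol\eta)$ this is already known. For $\mu(\sigma^{n_j}\bw)$, I note that $\mu(\bw)=\mu_{n_j}*(\text{scaled copy of }\mu(\sigma^{n_j}\bw))$. If $\mu(\sigma^{n_j}\bw)$ were not singular, Lemma~\ref{lem_singular} would force $\#B_{\omega_k}=N_{\omega_k}$ for all large $k$. Then every tail would be absolutely continuous by the same lemma, and so would the limit $\mu(\boldsymbol\eta)$, which is a contradiction.

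Lemma~\ref{lem_singular} applies because the ratios $\max|b|/N$ are bounded, as there are only finitely many pairs. Hence every element of $\operatorname{cl}(\Phi)$ is $\mu(\boldsymbol\eta')$ for a singular one. Proposition~\ref{pro_finite_zero set} then gives $\mathcal Z_t=\emptyset$ for every $t\in\mathcal E_m$. So $\Phi$ is $t$-admissible, and Theorem~\ref{thm_admissible-equi-positive} makes it $t$-equi-positive.

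\emph{The case $\mu^{\mathbf n}(\bw)$.} I rewrite $\mu^{\mathbf n}(\bw)$ as an infinite convolution of the pairs $(N_{\omega_k}^{n_k},B^{(k)})$, where
\[
B^{(k)}=N_{\omega_k}^{n_k-1}B_{\omega_k}+\cdots
\]
is in fact just $B_{\omega_k}$ scaled by $1$ with modulus $N_{\omega_k}^{n_k}$. These are admissible pairs, since $(N^{n},B)$ is admissible whenever $(N,B)$ is, by composing with $\{0\}$-type trivial blocks via Lemma~\ref{lem_hadamard}(iii) and the pairs $(N,\{0\})$.

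Because $\mathbf n$ is bounded, the new pairs range over the finite alphabet $\{(N_j^{s},B_j):1\le j\le m,\ 1\le s\le\max n_k\}$. Its tails are therefore $\mu'(\sigma^k\bw')$ for the random measure over this enlarged finite alphabet, with $\bw'=(\omega_k,n_k)_k$.

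I then repeat the argument above in the enlarged symbolic space.
\begin{itemize}
\item Lemma~\ref{lem_zero-set-local-finite} and Proposition~\ref{pro_finite_zero set} hold verbatim for any finite family of admissible pairs with integer digits.
\item $\gcd(t,N_j^s)=1$, so the relevant $\mathcal E$ is unchanged.
\item The count $\#\{k:\#B_{\omega_k}<N_{\omega_k}^{n_k}\}$ is at least the original one. Non-degeneracy of $\mu(\bw)$ forces $\#B_{\omega_k}<N_{\omega_k}$ infinitely often, and hence every tail and every limit of tails of $\mu^{\mathbf n}(\bw)$ is singular by Lemma~\ref{lem_singular}.
\end{itemize}
So I pick any convergent subsequence of tails by compactness, show its closure consists of singular measures of the same type, and conclude as before.

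\emph{Main obstacle.} The main difficulty is identifying the weak closure of the chosen subsequence of tails with measures of the form $\mu(\boldsymbol\eta)$, and ensuring that all of them are singular rather than only the limit. Continuity of $\phi$ (Lemma~\ref{lem_phicts}) together with compactness of $\Omega$ handles the identification. The infinitely-many-deficient-digits criterion of Lemma~\ref{lem_singular} handles singularity.
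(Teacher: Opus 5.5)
Your treatment of $\mu(\bw)$ follows the paper's argument. One small point there: a weak limit of absolutely continuous measures need not be absolutely continuous. The reason your limit $\mu(\boldsymbol\eta)$ would be absolutely continuous is symbolic: each letter of $\boldsymbol\eta$ equals some $\omega_k$ with $k$ arbitrarily large, hence is non-deficient, and Lemma~\ref{lem_singular} applies. The enlarged-alphabet idea for $\mu^{\mathbf n}(\bw)$ is also sound in itself. Indeed, $(N_j^s,B_j,N_j^{s-1}L_j)$ is directly a Hadamard triple, $\gcd(t,N_j^s)=1$ for $t\in\mathcal E_m$, and Lemma~\ref{lem_zero-set-local-finite} and Proposition~\ref{pro_finite_zero set} go through for this finite family.

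The gap is in the second case. You assert that, since $\#B_{\omega_k}<N_{\omega_k}$ infinitely often, every limit of tails of $\mu^{\mathbf n}(\bw)$ is singular, and then you pick \emph{any} convergent subsequence of tails. Infinitely many deficient letters in $\bw$ say nothing about the limit points of $\{\sigma^k\bw\}$; that is exactly why non-degeneracy is a hypothesis.

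For a concrete failure, take $(N_1,B_1)=(2,\{0,1\})$ and $(N_2,B_2)=(4,\{0,1\})$, which is admissible with $L=\{0,2\}$. Take $\mathbf n\equiv 1$ and $\bw=1\,2\,1^2\,2^2\,1^3\,2^3\cdots$. Then $\mu(\bw)$ is non-degenerate (shift to the starts of the $2^i$-blocks). But shifting to the starts of the $1^i$-blocks gives tails converging to $\mu(111\cdots)=\mathcal{L}|_{[0,1]}$. Its Fourier transform vanishes on $\Z\setminus\{0\}$, so $1\in\mathcal Z_3(\mathcal L|_{[0,1]})$ while $3\in\mathcal E_m$. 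The closure of that subsequence is therefore not $3$-admissible, and Theorem~\ref{thm_admissible-equi-positive} cannot be invoked.

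The repair is easy but must be made:
\begin{enumerate}
\item Start from the subsequence of your first part along which $\sigma^{n_j}\bw\to\boldsymbol\eta$ with $\mu(\boldsymbol\eta)$ singular.
\item Refine it by compactness of the enlarged symbolic space so that $\sigma^{n_j}\bw'\to\boldsymbol\zeta'$.
\item The first coordinates of $\boldsymbol\zeta'$ form $\boldsymbol\eta$, which has infinitely many letters with $\#B_{\eta_i}<N_{\eta_i}\le N_{\eta_i}^{s}$. So $\boldsymbol\zeta'$ has infinitely many deficient letters, and its measure is singular by Lemma~\ref{lem_singular}.
\end{enumerate}

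With this fix your route genuinely differs from the paper's. The paper stays in the original alphabet. It expands each factor $N_{\omega_k}^{-n_k}$ into $n_k$ single steps, i.e.\ it passes to $\omega_1^{n_1}\omega_2^{n_2}\cdots$. The corresponding expanded tails factor as $\tau_j=\nu_{>m_j}(\bw)^{\mathbf n}*\rho_{m_j}$. The paper proves equi-positivity for these standard tails by the first argument, and transfers it through the pointwise bound $|\widehat{\tau_j}|\le|\widehat{\nu_{>m_j}(\bw)^{\mathbf n}}|$.

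Your version dispenses with this domination trick and reuses the first case verbatim. The price is re-checking the zero-set machinery for the enlarged finite alphabet. Boundedness of $\mathbf n$ also enters differently in the two routes. For you, it makes the enlarged alphabet finite. For the paper, it guarantees that the limit of the expanded tails still contains every letter of the original limit, hence infinitely many deficient ones.
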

\begin{proof}
Fix 	 $\bw\in\Omega$ such that $\mu(\bw)$ is non-degenerate.	We write
\begin{equation}\label{def_v>k}
\begin{split}
	\nu_{>k}(\bw)
	&=\delta_{N_{\omega_{k+1}}^{-1} B_{\omega_{k+1}}} * \delta_{(N_{\omega_{k+2}} N_{\omega_{k+1}})^{-1} B_{\omega_{k+2}}} * \cdots
	=\mu\big(\sigma^k(\bw)\big), \\
	\nu_{>k}(\bw)^{\mathbf{n}}
	&=\delta_{N_{\omega_{k+1}}^{-n_{k+1}} B_{\omega_{k+1}}} * \delta_{N_{\omega_{k+2}}^{-n_{k+2}} N_{\omega_{k+1}}^{-n_{k+1}} B_{\omega_{k+2}}} * \cdots
	=\mu\big(\sigma^k(\bw)\big)^{\sigma^k(\mathbf{n})},
\end{split}
\end{equation}
	where $\sigma^k(\mathbf{n})=\{n_j\}_{j=k+1}^{\infty}$.
	
	Since $\mu(\bw)$ is non-degenerate,  there exists a subsequence $\{\nu_{>k_j}(\bw)\}_{j=1}^\infty$ converging weakly to a measure singular to the Lebesgue measure. Since $\Omega$ is compact, $\{\sigma^{k_j}(\bw)\}$ has a convergent subsequence. For simplicity, we assume $\{\sigma^{m_j}(\bw)\}_{j=1}^\infty$ converges to some $\bu\in\Omega$. By Lemma \ref{lem_phicts}, the sequence $\{\nu_{>m_j}(\bw)\}_{j=1}^\infty$ converges weakly to $\mu(\bu)$, so $\mu(\bu)$ is singular to Lebesgue measure.
	
	Combining \eqref{def_v>k} with Lemma \ref{lem_singular}, we obtain the following fact. If $\mu(\bw)$ is absolutely continuous to the Lebesgue measure, then $\nu_{>k}(\bw)$ is absolutely continuous for every positive integer $k$; if $\mu(\bw)$ is singular to the Lebesgue measure, then $\nu_{>k}(\bw)$ is singular for all $k\in\mathbb{N}_+$.

	Next, we  prove that $\mu(\bw)$ is singular to Lebesgue measure. Suppose, on the contrary, $\mu $ is absolutely continuous. Then Lemma \ref{lem_singular} yields
	\[
	\#\bigl\{k\in\mathbb{N}_+:\# B_{\omega_k}< N_{\omega_k}\bigr\}<\infty.
	\]
Hence there exists $K_0$ such that $\#B_{\omega_k} = N_{\omega_k}$ for all $k\ge K_0$. 	Since $\sigma^{m_j}(\bw)\to\bu$, for each fixed $k>0$, there exists $j_0\in\mathbb{N}_+$ such that for all $j\ge j_0$,
\[
\omega_{m_j+1}\omega_{m_j+2}\dots \omega_{m_j+k}
= \alpha_1 \alpha_2 \dots \alpha_k .
\]
Choosing  a large $k$ so that $m_j+k\ge K_0$, we obtain $\#B_{u_k} = \#B_{\omega_{m_j+k}} = N_{u_k}$ for every $k\ge 1$.
By Lemma~\ref{lem_singular}, $\mu(\bu)$ is absolutely
continuous.  This contradicts the fact that $\mu(\bu)$ is singular.  Therefore
$\mu(\bw)$ is singular.

	Therefore, $\nu_{>k}(\bw)$ is singular for all $k$. Recall that
	\[
	\operatorname{cl}\bigl\{\nu_{>m_j}(\bw)\bigr\}_{j=1}^\infty
	=\bigl\{\nu_{>m_j}(\bw)\bigr\}_{j=1}^\infty \cup \{\mu(\bu)\}.
	\]
	By Proposition \ref{pro_finite_zero set}, $\{\nu_{>m_j}(\bw)\}_{j=1}^\infty$ forms a $t$-admissible family for every $t\in\mathcal{E}_m$ and by Theorem \ref{thm_admissible-equi-positive}, it is also a $t$-equi-positive family for all $t\in\mathcal{E}_m$. Hence $\mu(\bw)$ is a $t$-equi-positive measure for each $t\in\mathcal{E}_m$.
	
	We write
	\[
	\boldsymbol{\eta} = \omega_1^{n_1} \omega_2^{n_2} \omega_3^{n_3}\cdots \in\Omega,
	\]
that is, $\eta_j = \omega_k \quad \text{whenever } n_1 + \cdots + n_{k-1} < j \leq n_1 + \cdots + n_k. $
By Theorem \ref{thm_factorization}, we may decompose the infinite convolution measure $\mu(\boldsymbol{\eta})$ as
	\[
	\mu(\boldsymbol{\eta}) = \mu^{\mathbf{n}}(\bw) * \rho
	\]
	for some $\rho\in\mathcal{P}(\mathbb{R})$.   Then  for all $\xi\in\mathbb{R}$,  we have
	\begin{equation*}\label{eq_factor}
		\big| \widehat{\mu(\boldsymbol{\eta})}(\xi) \big|
		= \big| \widehat{\mu^{\mathbf{n}}(\bw)}(\xi)\cdot \widehat{\rho}(\xi) \big|
		\leq \big| \widehat{\mu^{\mathbf{n}}(\bw)}(\xi) \big|.
	\end{equation*}

For every $j\geq 1$, let 
\[
\boldsymbol{\eta}^{(m_j)} = \sigma^{\,\sum_{k=1}^{m_j} n_k}(\boldsymbol{\eta}) .
\]
The standard infinite convolution obtained by expanding each block in the definition of
$\nu_{>m_j}(\bw)^{\mathbf{n}}$ into single factors is precisely
$\mu(\boldsymbol{\eta}^{(m_j)})$. Therefore, by Theorem~\ref{thm_factorization},
there exists a probability measure $\rho_{m_j}$ such that
\[
\mu(\boldsymbol{\eta}^{(m_j)})
=
\nu_{>m_j}(\bw)^{\mathbf{n}} * \rho_{m_j}.
\]
Since $\bigl|\widehat{\rho_{m_j}}(\xi)\bigr|\le 1$, it follows that
\begin{equation}\label{eq_factor'}
\bigl|\widehat{\mu(\boldsymbol{\eta}^{(m_j)})}(\xi)\bigr|
\le
\bigl|\widehat{\nu_{>m_j}(\bw)^{\mathbf{n}}}(\xi)\bigr|
\end{equation}
for all $\xi\in\mathbb{R}$.

	Since $\mathbf{n}$ is  bounded and $\sigma^{m_j}(\bw)\to\bu$, there exists a positive integer sequence $\mathbf{n}'$ such that
	\[
	\bv = \alpha_1^{n_1'} \alpha_2^{n_2'} \alpha_3^{n_3'}\cdots,
	\]
	and there also exists a subsequence $\{\mu(\boldsymbol{\eta}^{(s_j)})\}_{j=1}^\infty$ of $\{\mu(\boldsymbol{\eta}^{(m_j)})\}_{j=1}^\infty$ converging weakly to $\mu(\bv)$. As $\mu(\bu)$ is singular to Lebesgue measure, so is $\mu(\bv)$. Using the arguments above, we conclude that $\{\mu(\boldsymbol{\eta}^{(s_j)})\}_{j=1}^\infty$ forms a $t$-equi-positive family for every $t\in\mathcal{E}_m$. Combined with inequality \eqref{eq_factor'}, the family $\{\nu_{>s_j}(\bw)^{\mathbf{n}}\}_{j=1}^\infty$ is also $t$-equi-positive for every $t\in\mathcal{E}_m$. This completes the proof that $\mu(\bw)^{\mathbf{n}}$ is a $t$-equi-positive measure for all $t\in\mathcal{E}_m$.
\end{proof}

\begin{proposition}\label{pro_finite_equi-measure_unbound}
Let $\mu^\mbf{n}$ and $\mathcal{E}_m$ be given by \eqref{def_mubn2} and \eqref{def_em2} respectively. Suppose $\mbf{n}$ is  unbounded.
For each $\bw\in\Omega$ , $\mu^\mbf{n}(\bw)$ is a $t$-equi-positive measure for all $t\in\N_+$.
\end{proposition}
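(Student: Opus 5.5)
Since $\mathbf{n}$ is unbounded, I will pick indices $k_j$ with $n_{k_j+1}\to\infty$ and look at the tails
\[
\nu_{>k_j}(\bw)^{\mathbf{n}}=\delta_{N_{\omega_{k_j+1}}^{-n_{k_j+1}}B_{\omega_{k_j+1}}} * \delta_{N_{\omega_{k_j+1}}^{-n_{k_j+1}}N_{\omega_{k_j+2}}^{-n_{k_j+2}}B_{\omega_{k_j+2}}}*\cdots .
\]
Every $B$-point in this tail is scaled by at least $N_{\omega_{k_j+1}}^{-n_{k_j+1}}\le 2^{-n_{k_j+1}}$. Since the digits $B_k$ are finitely many bounded sets, the support of $\nu_{>k_j}(\bw)^{\mathbf{n}}$ lies in $[-C2^{-n_{k_j+1}},C2^{-n_{k_j+1}}]$, with $C$ independent of $j$ and $\bw$. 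Concretely, the $i$-th factor contributes at most $\max|B|\cdot 2^{-(n_{k_j+1}+i-1)}$. Hence $\nu_{>k_j}(\bw)^{\mathbf{n}}\to\delta_0$ weakly, and in fact uniformly on the Fourier side:
\[
\bigl|\widehat{\nu_{>k_j}(\bw)^{\mathbf{n}}}(\xi)-1\bigr|\le 2\pi|\xi|\,C2^{-n_{k_j+1}} .
\]

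To show $t$-equi-positivity for a fixed $t\in\N_+$, I take $k_{x,\mu}=0$ for every $x\in[0,t)$ and set $\delta_0=1$. For $|y|<1$ we have $|x+y|<t+1$. The estimate above then gives
\[
|\widehat{\nu_{>k_j}(\bw)^{\mathbf{n}}}(x+y)|\ge 1-2\pi(t+1)C2^{-n_{k_j+1}}\ge \tfrac12
\]
once $j$ is large. Discarding finitely many $j$ gives a subsequence forming a $t$-equi-positive family with $\epsilon_0=1/2$ and $\delta_0=1$. This verifies Definition \ref{def_equi_measure} for the infinite convolution $\mu^{\mathbf{n}}(\bw)$, viewed as the infinite convolution of the pairs $(N_{\omega_k}^{n_k},B_{\omega_k})$; its $\nu_{>k}$ is exactly the measure above.

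Alternatively, one can avoid the explicit bound: the family is tight and converges to $\delta_0$, whose $t$-periodic zero set is empty. The closure is the family together with $\delta_0$. Theorem \ref{thm_admissible-equi-positive} would then apply, but only after checking that each $\nu_{>k_j}$ in the family has empty $\mathcal{Z}_t$. That check is unnecessary with the direct estimate, so I will use the direct route.

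The only point needing care is the uniform support bound, i.e. that the tail scaling is dominated by $N_{\omega_{k_j+1}}^{-n_{k_j+1}}$ times a geometric series. I do not expect any of the steps to be a genuine obstacle; none of them uses that the pairs are admissible or that $\mu(\bw)$ is non-degenerate.
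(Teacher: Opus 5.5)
Your proof is correct and follows the same route as the paper. Both arguments pass to tails $\nu_{>k_j}(\bw)^{\mathbf{n}}$ that begin where the exponent $n_{k_j+1}$ is large, take $k_{x,\mu}=0$ and $\delta_0=1$, and bound $|\widehat{\nu_{>k_j}(\bw)^{\mathbf{n}}}|$ from below uniformly on $(-1,t+1)$. The only difference is the final estimate: the paper bounds each factor by $|M_{B}(\xi_s)|\ge\cos(\pi/2^{s+2})$ and takes the product, whereas you use $|\hat\nu(\xi)-1|\le 2\pi|\xi|\sup_{x\in\operatorname{supp}\nu}|x|$, which is equally valid and somewhat cleaner.
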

\begin{proof}	
Since $\mathbf{n}$ is an unbounded sequence of positive integers, for every $t\in\mathbb{N}_+$, there exists a subsequence $\{n_{t_j}\}_{j=1}^\infty$ such that
\begin{equation}\label{eq_n_t_j}
	\frac{\max\big\{|b| : b\in B_k,\, 1\le k\le m\big\}}{\min\big\{N_k^{n_{t_j}} : 1\le k\le m\big\}} \le \frac{1}{32t}
\end{equation}
holds for all $j\in\mathbb{N}_+$.

Recall the infinite convolution representation:
\[
\nu_{>t_j-1}^{\mathbf{n}}(\bw)
= \delta_{N_{\omega_{t_j}}^{-n_{t_j}} B_{\omega_{t_j}}}
* \delta_{\big(N_{\omega_{t_j}}^{n_{t_j}} N_{\omega_{t_j+1}}^{n_{t_j+1}}\big)^{-1} B_{\omega_{t_j+1}}}
* \cdots
* \delta_{\big(\prod_{\ell=0}^s N_{\omega_{t_j+\ell}}^{n_{t_j+\ell}}\big)^{-1} B_{\omega_{t_j+s}}}
* \cdots .
\]
Taking the Fourier transform yields
\[
\widehat{\nu_{>t_j-1}^{\mathbf{n}}(\bw)}(\xi)
= \prod_{s=0}^{\infty} M_{B_{\omega_{t_j+s}}}\Big( \frac{\xi}{\prod_{\ell=0}^s N_{\omega_{t_j+\ell}}^{n_{t_j+\ell}}} \Big)
= \prod_{s=0}^{\infty} M_{B_{\omega_{t_j+s}}}(\xi_s),
\]
where we define
\[
\xi_s = \frac{\xi}{\prod_{\ell=0}^s N_{\omega_{t_j+\ell}}^{n_{t_j+\ell}}}, \quad \forall\, s\ge 0.
\]

For each $\xi \in [-1, t+1]$, it is clear that
\[
|\xi_s| \le \frac{1}{N_{\omega_{t_j}}^{n_{t_j}}} \cdot \frac{|\xi|}{2^s} \le \frac{t}{N_{\omega_{t_j}}^{n_{t_j}} \, 2^{s-1}}.
\]
  Combining this with inequality \eqref{eq_n_t_j}, we obtain that for each $b\in B_{\omega_{t_j+s}}$,
\[
|2\pi b \xi_s|
\le 2\pi |b| \cdot \frac{t}{N_{\omega_{t_j}}^{n_{t_j}} \, 2^{s-1}}
\le \frac{\pi}{2^{s+3}}. 
\]
Thus, we have
\begin{align*}
	\big| M_{B_{\omega_{t_j+s}}}(\xi_s) \big|
	&= \Big| \frac{1}{\#B_{\omega_{t_j+s}}} \sum_{b\in B_{\omega_{t_j+s}}} e^{-2\pi i b \xi_s} \Big| \\
	&=\Big| \frac{1}{\#B_{\omega_{t_j+s}}} \sum_{b\in B_{\omega_{t_j+s}}} e^{-2\pi i b \xi_s} \cdot e^{\frac{\pi i }{2^{s+3}}} \Big| \\
	&\ge \cos\left( \frac{\pi}{2^{s+2}} \right).
\end{align*}

Therefore, for all $\xi \in [-1, t+1]$,
\[
\widehat{\nu_{>t_j-1}^{\mathbf{n}}(\bw)}(\xi)
\ge \prod_{s=0}^{\infty} \cos\left( \frac{\pi}{2^{s+2}} \right) > 0.
\]
Hence $\{\nu_{>t_j-1}^{\mathbf{n}}(\bw)\}_{j=1}^\infty$ is a $t$-equi-positive family, and   $\mu^\bn(\bw)$ is a $t$-equi-positive measure.
\end{proof}

\begin{theorem}\label{thm_finite}
Let $\mu^\mbf{n}$, $\mu$ and $\mathcal{E}_m$ be given by \eqref{def_mubn2}, \eqref{def_rm2} and \eqref{def_em2} respectively.  
For all $\bw$ such that $\mu(\bw)$ is non-degenerate, $\mu^\mbf{n}(\bw)$ is a spectral measure, and there exist uncountably many subsets $\Lambda_{\bw}$ of $\Z$ such that $t\Lambda_{\bw}$ is a spectrum of $\mu^\mbf{n}(\bw)$ for all $t\in \mathcal{E}_m$.
\end{theorem}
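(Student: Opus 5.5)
The plan is to apply Theorem~\ref{thm_A-spectral eigen set} to the infinite convolution $\mu^{\mathbf n}(\bw)$, viewed as the infinite convolution of the sequence of pairs $\{(N_{\omega_k}^{n_k},B_{\omega_k})\}_{k=1}^\infty$. First we check the hypotheses on the pairs. For each $k$, $(N_{\omega_k}^{n_k},B_{\omega_k})$ is an admissible pair: by Lemma~\ref{lem_hadamard}(iii), composing $n_k$ copies of a Hadamard triple $(N_{\omega_k},B_{\omega_k},L_{\omega_k})$ gives a Hadamard triple with scaling $N_{\omega_k}^{n_k}$ and digit set $N^{n_k-1}B+\dots+B$. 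We then want to pass to the digit set $B_{\omega_k}$ itself. A cleaner route is to note directly that $\{e_\lambda\}$ with $\lambda\in L_{\omega_k}N_{\omega_k}^{n_k-1}$ works: the differences $N^{n_k-1}(l-l')/N^{n_k}=(l-l')/N$, so $(N^{n_k},B,N^{n_k-1}L)$ is a Hadamard triple. Moreover, for $t\in\mathcal{E}_m$ we have $\gcd(t,N_{\omega_k}^{n_k})=1$, so Lemma~\ref{lem_gcd-Hadamard} shows that $t$ is an admissible eigenvalue of every pair $(N_{\omega_k}^{n_k},B_{\omega_k})$.

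Second, we verify $t$-equi-positivity of $\mu^{\mathbf n}(\bw)$ for all $t\in\mathcal{E}_m$, splitting into two cases. If $\mathbf n$ is bounded, Proposition~\ref{pro_finite_equi-measure_bound} gives this directly from the non-degeneracy of $\mu(\bw)$. If $\mathbf n$ is unbounded, Proposition~\ref{pro_finite_equi-measure_unbound} gives it for every $t\in\N_+$. Hence the set $\mathcal{E}$ of Theorem~\ref{thm_A-spectral eigen set} contains $\mathcal{E}_m$. Since $1\in\mathcal{E}_m$, $\mathcal{E}\neq\emptyset$, and part (1) of Theorem~\ref{thm_A-spectral eigen set} yields that $\mu^{\mathbf n}(\bw)$ is spectral.

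Third, for the uncountably many common spectra we run the construction in the proof of Theorem~\ref{thm_A-spectral eigen set}, but enumerating only $\mathcal{E}_m$ rather than all of $\mathcal{E}$. That construction used only that each listed $t$ is an admissible eigenvalue of all pairs and that $\mu$ is $t$-equi-positive, both of which hold for $t\in\mathcal{E}_m$. It yields $\Lambda\subset\Z$ with $t\Lambda$ a spectrum for all $t\in\mathcal{E}_m$.

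The step I expect to be the main obstacle is the uncountability argument. It needs infinitely many indices $k$ with $\#B_{\omega_k}<N_{\omega_k}^{n_k}$, which is what allows two inequivalent choices of $L$ in each grouped block. If $\mathcal{E}_m$ contains some $t>1$, this follows from part (2) of Theorem~\ref{thm_A-spectral eigen set}. If $\mathcal{E}_m=\{1\}$, I would argue directly instead. In the unbounded case, $n_k\ge 2$ infinitely often gives $\#B\le N<N^{n_k}$. In the bounded case, the proof of Proposition~\ref{pro_finite_equi-measure_bound} shows that $\mu(\bw)$ is singular, so by Lemma~\ref{lem_singular} infinitely many $k$ satisfy $\#B_{\omega_k}<N_{\omega_k}\le N_{\omega_k}^{n_k}$. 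The branching argument at the end of the proof of Theorem~\ref{thm_A-spectral eigen set} then applies verbatim.
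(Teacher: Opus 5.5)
Your proposal is correct and follows the paper's proof, which combines Theorem~\ref{thm_A-spectral eigen set}, Lemma~\ref{lem_gcd-Hadamard} and Propositions~\ref{pro_finite_equi-measure_bound} and~\ref{pro_finite_equi-measure_unbound} applied to the pairs $(N_{\omega_k}^{n_k},B_{\omega_k})$; your check that $(N_{\omega_k}^{n_k},B_{\omega_k},N_{\omega_k}^{n_k-1}L_{\omega_k})$ is a Hadamard triple makes explicit a step the paper leaves implicit. The case $\mathcal{E}_m=\{1\}$ never occurs, because any prime not dividing $N_1N_2\cdots N_m$ lies in $\mathcal{E}_m$, and since $\mathcal{E}_m\subseteq\mathcal{E}$, part (2) of Theorem~\ref{thm_A-spectral eigen set} applies directly with no need to re-run the construction.
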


\begin{proof}
This conclusion follows directly from Theorem \ref{thm_A-spectral eigen set}, Lemma \ref{lem_gcd-Hadamard}, Proposition \ref{pro_finite_equi-measure_bound} and Proposition \ref{pro_finite_equi-measure_unbound}.
\end{proof}

\begin{proof}[Proof of Theorem \ref{thm_finite_random}]
This conclusion follows directly from Theorem \ref{thm_finite}.
\end{proof}

\section{ Sufficient conditions for spectral random measures}

The hypotheses in our main theorems, especially the non-degeneracy condition, are not always easy to verify directly. In this section we therefore derive several explicit sufficient conditions.

\begin{corollary}\label{cor_finite_1}
Let $\mu^\mbf{n}$  and $\mathcal{E}_m$ be given by \eqref{def_mubn2}  and \eqref{def_em2} respectively.  Suppose that $\# B_k< N_k$ for $1 \leq k \leq m$. 
Then $\mu^\mbf{n}(\bw)$ is a spectral measure, and there exist uncountably many subsets $\Lambda_{\bw}$ of $\Z$ such that $t\Lambda_{\bw}$ is a spectrum of $\mu^\mbf{n}(\bw)$ for all $t\in \mathcal{E}_m$ for every $\bw \in \Omega$.
	
Moreover, all elements of $\mathcal{E}_m$ are spectral eigenvalues of $\mu^\mbf{n}(\bw)$ for every $\bw \in \Omega$.
\end{corollary}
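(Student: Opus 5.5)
The plan is to reduce Corollary~\ref{cor_finite_1} to Theorem~\ref{thm_finite}. The only hypothesis to check is that $\mu(\bw)$ is non-degenerate for every $\bw\in\Omega$.

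Fix $\bw\in\Omega$. Since $\#B_k<N_k$ for every $1\le k\le m$, we have $\#B_{\omega_k}<N_{\omega_k}$ for all $k\in\N_+$, so
\[
\#\{k\in\N_+:\#B_{\omega_k}<N_{\omega_k}\}=\infty .
\]
Lemma~\ref{lem_singular} needs a uniform bound on $\max\{|b|:b\in B_{\omega_k}\}/N_{\omega_k}$. This holds trivially, because only the finitely many pairs $(N_1,B_1),\dots,(N_m,B_m)$ occur. The lemma therefore shows that $\mu(\bw)$ is singular to Lebesgue measure.

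The same argument applies to every shifted word. By \eqref{def_v>k}, $\nu_{>k}(\bw)=\mu(\sigma^k(\bw))$, and $\sigma^k(\bw)$ again uses only the pairs with $\#B_j<N_j$. Hence every $\nu_{>k}(\bw)$ is singular. In particular the whole sequence $\{\nu_{>k}(\bw)\}$, and so any subsequence, has only singular limits.

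To obtain an actual weakly convergent subsequence, I use compactness of $\Omega$. Some subsequence $\sigma^{k_j}(\bw)$ converges to a point $\bu\in\Omega$. By Lemma~\ref{lem_phicts} (continuity of $\phi$ with $\mathbf n=\{1\}$), $\nu_{>k_j}(\bw)=\mu(\sigma^{k_j}(\bw))\to\mu(\bu)$ weakly. Since $\bu$ is also a word over the same alphabet, Lemma~\ref{lem_singular} again shows that $\mu(\bu)$ is singular. Thus $\mu(\bw)$ is non-degenerate. This limit-identification step is the only point needing care: it ensures the limit is of the form $\mu(\bu)$, to which the singularity criterion applies.

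Theorem~\ref{thm_finite} now yields, for every $\bw$, that $\mu^{\mathbf n}(\bw)$ is spectral and that there are uncountably many $\Lambda_{\bw}\subset\Z$ with $t\Lambda_{\bw}$ a spectrum for all $t\in\mathcal E_m$. For the final claim, $1\in\mathcal E_m$, so $\Lambda_{\bw}$ and $t\Lambda_{\bw}$ are both spectra for each $t\in\mathcal E_m$. Hence every element of $\mathcal E_m$ is a spectral eigenvalue of $\mu^{\mathbf n}(\bw)$.
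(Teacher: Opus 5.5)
Your proposal is correct and follows the paper's proof essentially step for step. Compactness of $\Omega$ gives $\sigma^{k_j}(\bw)\to\bu$, Lemma~\ref{lem_phicts} gives $\mu(\sigma^{k_j}(\bw))\to\mu(\bu)$ weakly, Lemma~\ref{lem_singular} makes $\mu(\bu)$ singular, and Theorem~\ref{thm_finite} finishes, with $1\in\mathcal{E}_m$ giving the eigenvalue claim.

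One small caution: your intermediate remark does not follow as stated. Singularity of every $\nu_{>k}(\bw)$ does not by itself make all subsequential limits singular, since weak limits of singular measures can be absolutely continuous. This is harmless only because you then identify the limit as $\mu(\bu)$, which is exactly the step the paper relies on.
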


\begin{proof}
Fix $\boldsymbol{w}\in\Omega$. Since $\Omega$ is compact, the sequence $\{\sigma^{n}(\boldsymbol{w})\}_{n=1}^\f$ has a convergent subsequence $\{\sigma^{n_j}(\boldsymbol{w})\}_{j=1}^\f$. Denote its limit by $\bu=\alpha_1\alpha_2\dots\in\Omega$. Then by Lemma \ref{lem_phicts}, $\mu(\sigma^{n_j}(\boldsymbol{w}))$ converges weakly to $\mu(\bu)$.

Since $\# B_k< N_k$ for $1 \leq k \leq m$, we obtain $\#\bigl\{k\in\mathbb{N}_+:\# B_{\alpha_k}< N_{\alpha_k}\bigr\}=\infty.$
By Lemma \ref{lem_singular}, $\mu(\bu)$ is singular to Lebesgue measure.
It follows that $\mu(\boldsymbol{w})$ is non‑degenerate.
Finally, with the non‑degeneracy of $\mu(\boldsymbol{w})$ established, we apply Theorem \ref{thm_finite} to complete the proof.
\end{proof}

\begin{proof}[Proof of Corollary \ref{cor_finite_random_1}]
	It follows immediately from  corollary \ref{cor_finite_1}.
\end{proof}

Similar results for the following lemma may be found in \cite{LMW-2023, LMZ25}. We provide a proof for readers' convenience.
\begin{lemma}\label{lem_convergence_of_word}
Let $\mathbb{P}$ be the Bernoulli measure on $\Omega$ given by \eqref{def_BPM} with respect to a positive  probability vector $\bp=(p_1,p_2,\ldots, p_m)$. Suppose that $p_1>0$. Given $\boldsymbol{\alpha}=111\cdots \in \Omega$, there exists $\Omega_{0}\subset \Omega $ with $\mathbb{P}(\Omega_0)=1$ such that for each $\boldsymbol{\omega} \in \Omega_{0}$,  we have that
$$
\lim_{j\to \infty } \sigma^{n_{j}}(\boldsymbol{\omega}) =\boldsymbol{\alpha},
$$
for some  strictly increasing  sequence $\{n_{j}\}_{j=1}^{\infty}$.
\end{lemma}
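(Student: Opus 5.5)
The plan is a Borel--Cantelli argument applied to the i.i.d. coordinates $\omega_1,\omega_2,\dots$ under $\mathbb{P}$. Convergence $\sigma^{n_j}(\bw)\to\bu=111\cdots$ in the metric $d$ means that for every $k$ the word $\omega_{n_j+1}\cdots\omega_{n_j+k}$ equals $1^k$ for all large $j$. It therefore suffices to produce a full-measure set $\Omega_0$ such that, for each $\bw\in\Omega_0$ and each $k\in\N_+$, the block $1^k$ occurs in $\bw$ at infinitely many positions.

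For fixed $k$, I would partition the coordinates into disjoint blocks $I_{k,i}=\{ik+1,\dots,(i+1)k\}$ for $i\ge0$. Let $A_{k,i}=\{\bw:\omega_r=1 \text{ for all } r\in I_{k,i}\}$. This is a finite union of cylinders of length $(i+1)k$, and by \eqref{def_BPM} it has $\mathbb{P}(A_{k,i})=p_1^k>0$. Because the blocks are disjoint, the events $\{A_{k,i}\}_{i\ge0}$ are independent under the Bernoulli measure, and $\sum_i\mathbb{P}(A_{k,i})=\infty$. The second Borel--Cantelli lemma then gives $\mathbb{P}(\limsup_i A_{k,i})=1$. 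Set
\[
\Omega_0=\bigcap_{k=1}^\infty\limsup_{i\to\infty}A_{k,i}.
\]
As a countable intersection of full-measure sets, $\mathbb{P}(\Omega_0)=1$.

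Fix $\bw\in\Omega_0$. I build $n_j$ inductively. Given $n_{j-1}$, I choose $i$ with $ik>n_{j-1}$ (taking $k=j$) and $\bw\in A_{j,i}$, which is possible since $\bw\in\limsup_iA_{j,i}$. Then I set $n_j=ij$. This makes $\{n_j\}$ strictly increasing, and $\sigma^{n_j}(\bw)$ begins with $1^j$, so $d(\sigma^{n_j}(\bw),\bu)\le2^{-j}\to0$.

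The only point needing care is the independence of the events $A_{k,i}$. It follows because $\mathbb{P}$ is the product measure $\bp^{\otimes\N}$: the probability of a cylinder is a product of the $p_{\alpha_i}$, so events depending on disjoint coordinate sets multiply. This is routine, so I expect no real obstacle. Positivity of $p_1$ is the only hypothesis used, and positivity of the other $p_j$ is not needed.
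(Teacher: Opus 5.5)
Your proposal is correct and follows the same scheme as the paper: for each block length $k$ you use disjoint blocks that are independent under the Bernoulli measure, show that the block $1^k$ occurs infinitely often almost surely, intersect over countably many $k$, and then choose $n_j$ inductively. The only difference is the limit theorem: you use the second Borel--Cantelli lemma, whereas the paper applies the Kolmogorov strong law of large numbers to the block indicators. Your tool is lighter and sufficient, since only infinitely many occurrences are needed, not their asymptotic frequency.
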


\begin{proof}
For each integer  $q\geq1$, choose a sequence $\{ k_{j}^{(q)}\}_{j=1}^{\infty}$ of positive integers  such that $k_{1}^{(q)} = 1$ and $k_{j+1}^{(q)} - k_{j}^{(q)} > q$ for all   $j\ge1$.
	
	Fix $q$. For each integer $i\geq 1$,  we define a random variable $X_i^{(q)}: \Omega \to \R$ by
	\begin{equation*}
		X_{i}^{(q)}(\boldsymbol{\omega})=
		\begin{cases}
			1, & \omega_{k_{i}^{(q)}} \omega_{k_{i}^{(q)}+1} \cdots \omega_{k_{i}^{(q)}+q-1} = 11\cdots 1, \\
			0, & \text{otherwise}.
		\end{cases}
	\end{equation*}
	Since the Bernoulli measure  $\mathbb{P}$  is generated by the probability vector $\bp=(p_1,p_2,\ldots, p_m)$ where $p_1>0$ , the expectation of $X_i^{(q)}$ is given by
	\begin{equation*}
		\mathbb{E}[X_{i}^{(q)}] = \mathbb{P}(X_{i}^{(q)} = 1) = p_{1} p_{1} \dots p_{1}=p_1^q>0,
	\end{equation*}
	for all $i\geq1$. Hence $\{ X_{i}^{(q)}(\boldsymbol{\omega}) \}_{i=1}^{\infty}$ is a sequence of independently identically distributed integrable random variables.
	
	By the  Kolmogorov strong law of large numbers, there exists a subset $\Omega_{q} \subseteq \Omega$ with $\mathbb{P}(\Omega_q)=1$ such that for all $\boldsymbol{\omega} \in \Omega_{q}$,
	\begin{equation*}
		\lim\limits_{n\to\infty} \dfrac{1}{n} \sum\limits_{i=1}^{n} X_{i}^{(q)}(\boldsymbol{\omega}) = E[X_{i}^{(q)}] = p_1^q,
	\end{equation*}
	which is equivalent to
	\begin{equation}\label{equation_Kolmogorov_strong_law_of_large_number_in_q_situation}
		\lim\limits_{n\to\infty} \dfrac{\# \{ 1\leq i \leq n : \omega_{k_{i}^{(q)}} \omega_{k_{i}^{(q)}+1} \cdots \omega_{k_{i}^{(q)}+q-1} = 11 \cdots 1 \}}{n} = p_1^q.
	\end{equation}

	Since $\mathbb{P}(\Omega_q)=1$ for all integers $q\geq 1$, we write
	\begin{equation*}
		\Omega_{0} = \bigcap_{q=1}^{\infty} \Omega_{q},
	\end{equation*}
	and it is clear that   $\mathbb{P}(\Omega_{0}) = 1.$  Hence for all $\boldsymbol{\omega} \in \Omega_{0}$ and all integers $q\geq 1$, we have that
	\begin{equation}\label{equation_Kolmogorov_strong_law_of_large_number}
		\lim\limits_{n\to\infty} \dfrac{\# \{ 1\leq i \leq n : \omega_{k_{i}^{(q)}} \omega_{k_{i}^{(q)}+1} \cdots \omega_{k_{i}^{(q)}+q-1} = 11 \cdots 1 \}}{n} = p_1^q>0.
	\end{equation}

	For each given  $\boldsymbol{\omega} =\omega_1\omega_2\cdots\in \Omega_{0}$, we define a sequence of integers $n_j$ inductively. First, for $q=1$, by \eqref{equation_Kolmogorov_strong_law_of_large_number}, there exists a sufficiently large integer $K_{1}\geq 1$ such that
	$$
	\# \{ 1\leq i \leq K_1 : \omega_{k_{i}^{(1)}} = 1\}>\frac{1}{2}K_1 \cdot p_1>1.
	$$
	We choose $i_1\in \{ 1\leq i \leq K_1 : \omega_{k_{i}^{(1)}} = 1\}$ and have $\omega_{k_{i_1}^{(1)}} = 1$. By setting $n_1=k_{i_1}^{(1)}-1$, we have that
	$$
	\sigma^{n_{1}}(\boldsymbol{\omega}) =1\omega_{n_1+2}\omega_{n_1+3}\cdots.
	$$
	
	Assume that the integers $\{ n_{j} \}_{j=1}^{l}$ have been chosen and  satisfy that   $n_{1} < n_{2} < \dots < n_{l}$ and
	\begin{equation}\label{shifpro}
		\sigma^{n_{j}}(\boldsymbol{\omega}) =1^j \ \omega_{n_{j}+j+1}\omega_{n_{j}+j+2}\cdots=11\cdots1\ \omega_{n_{j}+j+1}\omega_{n_{j}+j+2}\cdots.
	\end{equation}
	for all $1\leq j \leq l$. For  $q = l+1$, letting $\epsilon = \frac{1}{2}p_1^{l+1} $,  by \eqref{equation_Kolmogorov_strong_law_of_large_number}, there exists an  integer  $K_{l+1}\geq 1$ such that 	
	$$
	\# \{1\leq i \leq n: \omega_{k_{i}^{(l+1)}} \omega_{k_{i}^{(l+1)}+1} \cdots \omega_{k_{i}^{(l+1)}+l} = 11 \cdots 1=1^{l+1} \} \geq n \big(p_1^{l+1}  - \epsilon\big).
	$$
	for all $n>K_{l+1}$. For a sufficiently  large $n$, we choose $i_{l+1}$ such that $1\le i_{l+1}\le n$, $k_{i_{l+1}}^{(l+1)} > n_{{l}}+1$  and
	\begin{equation*}
		\omega_{k_{i_{l+1}}^{(l+1)}} \omega_{k_{i_{l+1}}^{(l+1)}+1} \cdots \omega_{k_{i_{l+1}}^{(l+1)}+l} = 1^{l+1}.
	\end{equation*}
	Setting  $n_{l+1} = k_{i_{l+1}}^{(l+1)}-1$,  we have that
	$$
	\sigma^{n_{l+1}}(\boldsymbol{\omega}) =1^{l+1}\omega_{n_{l+1}+l+2}\omega_{n_{l+1}+l+3}\cdots.
	$$
	
	Recall that  $\bu=111\cdots$. For each given $\boldsymbol{\omega}$, we obtain a sequence $\{n_j\}_{j=1}^\infty$ satisfying \eqref{shifpro}, and it follows  that
	$$
	d(\sigma^{n_{j}}(\boldsymbol{\omega}),\bu) \leq  2^{-j}
	$$
	for all $j\geq 1$. Therefore  $\{ \sigma^{n_{j}} (\boldsymbol{\omega}) \}$ converges to $\boldsymbol{\alpha} \in \Omega$,  and the conclusion holds.
\end{proof}

\begin{proof}[Proof of Theorem \ref{thm_finite_random_Bernoulli}]
By Theorem \ref{thm_finite_random}, it suffices to show that $\mu$ is non-degenerate $\mathbb{P}$-a.e.

Since $\mu$ is singular to Lebesgue measure $\mathbb{P}$-a.e., where $\mathbb{P}$ is the Bernoulli probability measure on $\Omega$ generated by a probability vector $\mathbf{p}=(p_1, p_2, \cdots, p_m)$, it follows from Lemma \ref{lem_singular} that there exists $1\le \alpha\le m$ such that $\# B_\alpha<N_\alpha$ and $p_\alpha>0$. Without loss of generality, we assume that $\alpha=1$.  Then, by Lemma \ref{lem_convergence_of_word}, there exists a subset $\Omega_0\subset \Omega$ with $\mathbb{P}(\Omega_0)=1$ such that for each $\boldsymbol{\omega} \in \Omega_0$, we have
$$
\lim_{j\to \infty } \sigma^{n_j}(\boldsymbol{\omega}) =\boldsymbol{\alpha}=111\cdots,
$$
for some strictly increasing sequence $\{n_j\}_{j=1}^{\infty}$. Thus, by Lemma \ref{lem_phicts}, $\mu(\sigma^{n_j}(\boldsymbol{\omega}))$ converges weakly to $\mu(\boldsymbol{\alpha})$. Since $\# B_\alpha<N_\alpha$, by Lemma \ref{lem_singular}, we have that $\mu(\boldsymbol{\alpha})$ is singular  to Lebesgue measure. It follows that $\mu$ is non-degenerate $\mathbb{P}$-a.e. This completes the proof.
\end{proof}	
	
The following result from \cite{Chen-Liu-Liu-2026} gives a necessary condition for a real number to be a spectral eigenvalue in the consecutive‑digit case, which simplifies our  proof of Theorem \ref{thm_finite_consecutive}.	
\begin{theorem} \label{thm_consecutive}
Let $\{(N_k,B_k)\}_{k=1}^\infty$ be a sequence of admissible pairs in $\mathbb{R}$, and suppose that  for each $k\in\N_+$, $B_k = \{0,1,\dots,b_k-1\}$ with $b_k\in\mathbb{N}_+$.
Let $\mu$ be given by \eqref{infinite-convolution}. Then $t\in\mathbb{R}$ is a spectral eigenvalue of $\mu$ only if $t=\dfrac{q}{p}$ with
\[
\gcd(p,q)=\gcd(p,b_k)=\gcd(q,b_k)=1
\]
for all $k\in\N_+$.
\end{theorem}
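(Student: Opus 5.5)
The statement is cited from \cite{Chen-Liu-Liu-2026}; here is how I would prove it directly. Fix $t\neq 0$ and $\Lambda$ with both $\Lambda$ and $t\Lambda$ spectra of $\mu$. Translating $\Lambda$ (and so $t\Lambda$) keeps both spectra, so assume $0\in\Lambda$.

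\textbf{Step 1: the zero set.} Since $(N_k,\{0,\dots,b_k-1\})$ is admissible, $b_k\mid N_k$. Moreover $M_{B_k}(\xi)=0$ if and only if $b_k\xi\in\mathbb{Z}$ and $\xi\notin\mathbb{Z}$. Writing $\mathbf N_k=N_1\cdots N_k$, this gives
\[
\mathcal{O}(\hat\mu)=\bigcup_{k\ge1}\frac{\mathbf N_k}{b_k}\bigl(\mathbb{Z}\setminus b_k\mathbb{Z}\bigr)\subset\mathbb{Z}.
\]
Orthogonality (Theorem \ref{thm_Q}(i)) gives $\Lambda-\Lambda\subset\mathcal{O}(\hat\mu)\cup\{0\}$ and $t(\Lambda-\Lambda)\subset\mathcal{O}(\hat\mu)\cup\{0\}$. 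Hence $\Lambda\subset\mathbb{Z}$. Taking any nonzero $\lambda\in\Lambda$ (completeness forces $\#\Lambda=\infty$), both $t\lambda$ and $\lambda$ are nonzero integers, so $t=q/p\in\mathbb{Q}$ with $\gcd(p,q)=1$.

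\textbf{Step 2: $\gcd(q,b_k)=1$.} For a nonzero difference $d=\lambda-\lambda'$ there is a level $k(d)$ with $d=\frac{\mathbf N_{k}}{b_{k}}j$ and $b_{k}\nmid j$. I would first extract this level canonically via $r$-adic valuations, so that the levels are determined by divisibility of $d$ by $\mathbf N_{k-1}$ and $\mathbf N_{k}/b_{k}$.

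Next I would show that completeness, $Q_{\mu,\Lambda}\equiv1$, forces every ``residue pattern'' to occur. Concretely, using $\mu=\mu_{k}*\mu_{>k}$ and the structure of Hadamard triples (Lemma \ref{lem_hadamard}), I would show the following: for each $k$ with $b_k\ge2$ and each nonzero residue $j\bmod b_k$, there are $\lambda,\lambda'\in\Lambda$ whose difference has level $k$ and quotient $\equiv j\pmod{b_k}$. The mechanism is that $\Lambda$, viewed at scale $\mathbf N_{k}/b_k$ modulo $\mathbf N_k$, must contain a complete residue system modulo $b_k$. Otherwise $Q_{\mu,\Lambda}$ would vanish near a suitable point $\xi$, by testing $\xi$ against the factor $M_{B_k}$.

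Now suppose a prime $r$ divides both $q$ and $b_k$, and take $j=b_k/r$. Then $t d=\frac{q}{p}\cdot\frac{\mathbf N_k}{b_k}\cdot\frac{b_k}{r}$ has the factor $b_k$ absorbed. I would then check, via $r$-adic valuation, that $td$ lies in no level-$k'$ piece of $\mathcal{O}(\hat\mu)$. Since $r$ divides some $b_{k'}\mid N_{k'}$, this amounts to comparing $v_r(td)$ against $v_r(\mathbf N_{k'}/b_{k'})$ for all $k'$. The conclusion $td\notin\mathcal{O}(\hat\mu)$ contradicts orthogonality of $t\Lambda$.

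\textbf{Step 3: $\gcd(p,b_k)=1$.} This is symmetric. The set $t\Lambda$ is a spectrum and $\Lambda=t^{-1}(t\Lambda)$ is also a spectrum, so $t^{-1}=p/q$ is a spectral eigenvalue. Applying Step 2 to $t^{-1}$ gives $\gcd(p,b_k)=1$.

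\textbf{Main obstacle.} The hard part is Step 2. One difficulty is proving that completeness forces all residues modulo $b_k$ at level $k$. The other is the valuation bookkeeping that rules out $td$ landing in some other level $k'$, which is delicate because different $N_{k'}$ may share the prime $r$. For the first, I would try a perturbation argument: choose $\xi=\frac{\mathbf N_k}{b_k}\cdot\frac{j}{\cdot}$ so that the level-$k$ factor kills every $\lambda$ outside the missing residue class. Then $Q_{\mu,\Lambda}(\xi)<1$ unless some $\lambda$ lies in that class.
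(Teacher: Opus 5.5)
The paper does not prove this statement; it quotes it from Chen--Liu--Liu, so I am judging your argument on its own. Three parts are sound: Step~1, the residue lemma in Step~2, and the symmetry in Step~3. The residue lemma is actually easy: if no $\lambda\in\Lambda$ satisfies $\lambda\equiv\frac{\mathbf N_k}{b_k}j \pmod{\mathbf N_k}$, then $Q_{\mu,\Lambda}(-\frac{\mathbf N_k}{b_k}j)=0$ exactly.

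\textbf{The gap.} It is the last claim of Step~2: that for the extracted $d$, the point $td$ lies in no level piece of $\mathcal O(\hat\mu)$. This is false, and no $r$-adic bookkeeping can prove it. Membership $x\in\frac{\mathbf N_{k'}}{b_{k'}}(\mathbb Z\setminus b_{k'}\mathbb Z)$ requires only divisibility by $\mathbf N_{k'}/b_{k'}$ together with $\mathbf N_{k'}\nmid x$. That non-divisibility can be supplied by a prime other than $r$, or by the denominator $p$. Meanwhile, multiplying by $q$ only gives you a \emph{lower} bound on $v_r(td)$, which never rules this out.

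\textbf{A counterexample.} Let $(N_1,B_1)=(4,\{0,1\})$, $(N_2,B_2)=(3,\{0,1,2\})$, $(N_k,B_k)=(4,\{0,1\})$ for $k\ge3$, and take $t=2$, $r=2$, $k=1$. Every $d\equiv 2$ or $10\pmod{12}$ has level $1$ and odd quotient $d/2$. Every spectrum containing $0$ meets both classes, since above $2 \bmod 4$ the level-2 residues must fill $\{2,6,10\} \bmod 12$. Yet $td\equiv 4$ or $8\pmod{12}$ lies in the level-2 piece $4(\mathbb Z\setminus 3\mathbb Z)$; for example $\hat\mu(4)=M_{B_1}(1)M_{B_2}(\tfrac13)\cdots=0$. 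Only $d\equiv 6\pmod{12}$ would give your contradiction, and nothing in your argument selects it.

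\textbf{The repair.} Run the valuation argument in the direction where it bites, namely against the denominator.
\begin{itemize}
\item Suppose $\gcd(p,b_k)>1$. Take $k$ \emph{minimal} with this property and a prime $r\mid\gcd(p,b_k)$.
\item Apply your residue lemma with $j=1$. This gives $d\in\Lambda-\Lambda$ with $\mathbf N_{k-1}\mid d$ and $d\equiv \mathbf N_k/b_k\pmod{\mathbf N_k}$. Since $r\mid b_k$, this forces $v_r(d)=v_r(\mathbf N_k/b_k)$ exactly.
\item Then $v_r(td)=v_r(d)-v_r(p)<v_r(\mathbf N_k/b_k)$. This rules out every level $\ge k$, because all those pieces lie in $\frac{\mathbf N_k}{b_k}\mathbb Z$.
\item For a level $\ell<k$: primes $s\nmid p$ satisfy $v_s(td)\ge v_s(d)\ge v_s(\mathbf N_{k-1})\ge v_s(\mathbf N_\ell)$. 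Primes $s\mid p$ do not divide $b_\ell$, by minimality of $k$. So $\frac{\mathbf N_\ell}{b_\ell}\mid td$ would force $\mathbf N_\ell\mid td$, which is incompatible with level $\ell$.
\end{itemize}
Hence $td\notin\mathcal O(\hat\mu)$, contradicting the orthogonality of $t\Lambda$. This proves $\gcd(p,b_k)=1$. Your Step~3, applied to $t^{-1}=p/q$ with spectrum $t\Lambda$, then gives $\gcd(q,b_k)=1$.

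The missing ingredients are therefore: the choice $j=1$, which pins $v_r(d)$ exactly; dividing by the shared prime rather than multiplying by it; and minimality of $k$, which disposes of the lower levels.
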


The following simple fact is useful in the proof of Theorem \ref{thm_finite_consecutive}. 
\begin{lemma}\label{lem_zsc}
Let $B = \{0,1,\dots ,b-1\}$. Then 
\[
\mathcal{O}(M_B)=\Bigl\{\frac{m}{b}\;:\; m\in\mathbb Z\setminus b\Z\Big\}.
\]
\end{lemma}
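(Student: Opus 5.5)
The plan is to compute $M_B$ in closed form via the geometric series and read off its zeros. For $\xi\in\mathbb{R}$ with $\xi\notin\mathbb{Z}$ we have $e^{-2\pi i\xi}\neq 1$, so by \eqref{def_FMB}
\[
M_B(\xi)=\frac{1}{b}\sum_{j=0}^{b-1}e^{-2\pi i j\xi}=\frac{1}{b}\cdot\frac{1-e^{-2\pi i b\xi}}{1-e^{-2\pi i\xi}} .
\]
For $\xi\in\mathbb{Z}$ every term equals $1$, so $M_B(\xi)=1\neq 0$. Hence the integers never lie in $\mathcal{O}(M_B)$.

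Next, for $\xi\notin\mathbb{Z}$ the denominator is nonzero. Therefore $M_B(\xi)=0$ if and only if $e^{-2\pi i b\xi}=1$, which happens exactly when $b\xi\in\mathbb{Z}$. Writing $m=b\xi\in\mathbb{Z}$, the additional condition $\xi\notin\mathbb{Z}$ becomes $m\notin b\mathbb{Z}$. This gives
\[
\mathcal{O}(M_B)=\Bigl\{\tfrac{m}{b}: m\in\mathbb{Z}\setminus b\mathbb{Z}\Bigr\}.
\]
For the reverse inclusion, any $\xi=m/b$ with $m\notin b\mathbb{Z}$ is a non-integer with $b\xi\in\mathbb{Z}$, so the formula gives $M_B(\xi)=0$.

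Finally, the degenerate case $b=1$ has to be checked separately. There $M_B\equiv 1$ and $\mathbb{Z}\setminus\mathbb{Z}=\emptyset$, so both sides are empty and the identity still holds.

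There is no real obstacle. The only care needed is to treat integer $\xi$ separately, since the geometric-series formula has a vanishing denominator there.
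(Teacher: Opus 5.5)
Your proof is correct and follows essentially the same route as the paper, which uses the identity $b(1-e^{-2\pi i\xi})M_B(\xi)=1-e^{-2\pi i b\xi}$ together with $M_B\equiv 1$ on $\mathbb{Z}$ and reads off the zeros in the same way. Your separate check of the case $b=1$ is harmless, but the general argument already covers it.
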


\begin{proof}
Since    $M_B(\xi)=\frac1b\sum_{j=0}^{b-1}e^{-2\pi i j\xi}$, we have
\[
b(1-e^{-2\pi i\xi})M_B(\xi)=1-e^{-2\pi i b\xi}.
\]
For $\xi\in\mathbb Z$, we have $M_B(\xi) =1.$ Therefore $M_B(\xi)=0$ if and only if $b\xi\in\mathbb Z$ but
$\xi\notin\mathbb Z$, i.e.\ $\xi=\frac{m}{b}$ with $m\in\mathbb Z\setminus b\Z$.
\end{proof}

\begin{proof}[proof of Theorem \ref{thm_finite_consecutive}]	
By Theorem \ref{thm_consecutive}, it suffices to prove that each $t\in\mathcal{E}_{\bw}$ is a spectral eigenvalue of $\mu^{\mathbf{n}}(\bw)$ for every $\bw\in\Omega$ with $\mu(\bw)$ non‑degenerate. It is easy to check that $t\in\mathcal{E}_{\bw}$ is an admissible eigenvalue of every $(N_{\omega_k},B_{\omega_k})$, then by Theorem \ref{thm_A-spectral eigen set}, we only need to verify that $\mu^{\mathbf{n}}(\bw)$ is a $t$-equi‑positive measure. 

If $\bn$ is unbounded, the conclusion follows from Proposition \ref{pro_finite_equi-measure_unbound}.

If $\bn$ is bounded, by the proof of Proposition \ref{pro_finite_equi-measure_bound}, it suffices to show that
\[
\mathcal{Z}_t\big(\mu(\bw)\big)=\emptyset
\]
for all $t\in \mathcal{E}_{\bw}$ and all $\bw \in \Omega$ satisfying
\[
\#\big\{k\in\mathbb{N}_+:\# B_{\omega_k}< N_{\omega_k}\big\}=\f.
\]

Assume that there exist $t\in \mathcal{E}_{\bw}$ and $\bw \in \Omega$ with $\#\{k\in\N_+:\# B_{\omega_k}< N_{\omega_k}\}=\infty$ such that $\mathcal{Z}_t\big(\mu(\bw)\big)\ne\emptyset$. By Lemma \ref{lem_zero-set-local-finite}, the set $[-t, t] \cap \left( \bigcup_{\boldsymbol{\eta} \in \Omega} \mathcal{O}\big(\widehat{\mu(\boldsymbol{\eta})}\big) \right)$ is finite. By the same construction as in Proposition~\ref{pro_finite_zero set},  $X_n$ is increasing and bounded, and there exists $n_0 > 1$ such that $\# X_{n-1} = \# X_{n}$ for all $n > n_0$.

Since $(N_k,B_k)$ is an admissible pair and $B_k = \{0,1,\dots,b_k-1\}$ for every $1\le k\le m$, it follows that $b_k \mid N_k$. Write $d_k = \frac{N_k}{b_k}$ and $L_k=\{0,d_k,2d_k,\cdots(b_k-1)d_k\}$. Then $(N_k,B_k,L_k)$ is a Hadamard triple.

Fix an integer $n>n_0$. For every $\xi \in X_{n-1}$, there exists a unique $\ell_0 \in \{0, 1, \dots, N_{\omega_{n}} - 1\}$ such that $M_{B_{\omega_{n}}}(\psi_{\ell_0,\omega_{n}}(\xi)) \neq 0$. Write $z=\psi_{\ell_0,\omega_{n}}(\xi)$. Since $z\in X_{n}$, by Lemma \ref{lem_zsc}, we get
\begin{equation}\label{inclzkt}
z+kt\in \mathcal{O}\big(\widehat{\mu\big(\sigma^{n}(\bw)\big)}\big)=\bigcup_{j=1}^\f N_{\omega_{n+1}}N_{\omega_{n+2}}\cdots N_{\omega_{n+j-1}} d_{\omega_{n+j}}\big(\mathbb{Z}\setminus b_{\omega_{n+j}}\mathbb{Z}\big),
\end{equation}
for all $k\in\Z$.  Thus, $z\in\Z$, and
\begin{align*}
	\left| M_{B_{\omega_{n}}}\big(N_{\omega_{n}}^{-1}(t\ell-t\ell_0)\big) \right|
	&=\Big| \frac{1}{\# B_{\omega_{n}}} \sum_{b \in B_{\omega_{n}}} e^{-2\pi i b \big(\psi_{\ell_0,\omega_{n}}(\xi)+N_{\omega_{n}}^{-1}(t\ell-t\ell_0)\big )} \Big|\\
	&=\left| M_{B_{\omega_{n}}}(\psi_{\ell,\omega_{n}}(\xi)) \right|\\
	&=0,
\end{align*}
for each $\ell\in \{0, 1, \dots, N_{\omega_{n}} - 1\}\setminus \{\ell_0\}$, which implies that
\begin{equation}\label{eq_bi-zero'}
	\left(t\{0, 1, \dots, N_{\omega_{n}} - 1\}-t\{0, 1, \dots, N_{\omega_{n}} - 1\}\right)\setminus\{0\}\subset \mathcal{O}\left(\widehat{\delta}_{N_{\omega_{n}}^{-1}B_{\omega_{n}}}\right).
\end{equation}

Let $d = \gcd(t,N_{\omega_{n}})$. Since $\gcd(t,b_{\omega_{n}})=1$, we have $d\mid d_{\omega_{n}}$. Consequently,
\[
t\{0, 1, \dots, N_{\omega_{n}}-1\}\equiv\{0, d, \dots, \bigl(\tfrac{N_{\omega_{n}}}{d}-1\bigr)d\}\pmod{N_{\omega_{n}}}.
\]
By \eqref{eq_bi-zero'}, the family
\[
\bigl\{e_\lambda(x) = e^{-2\pi i \lambda x} \;\big|\; \lambda \in \bigl\{0, d, \dots, \bigl(\tfrac{N_{\omega_{n}}}{d}-1\bigr)d\bigr\}\bigr\}
\]
is orthonormal in $L^2(\delta_{N_{\omega_{n}}^{-1}B_{\omega_{n}}})$. This implies that $d=d_{\omega_{n}}$, that is $d_{\omega_{n}}= \gcd(t,N_{\omega_{n}})$. Otherwise, the cardinality of this orthonormal set is strictly larger than that of an orthonormal basis, which gives a contradiction.

 By \eqref{inclzkt},  for each $k$, there exists $j_k\in\N_+$ and $x_k\in\mathbb{Z}\setminus b_{\omega_{n+j_k}}\mathbb{Z}$, such that 
$$
z+kt=N_{\omega_{n+1}}N_{\omega_{n+2}}\cdots N_{\omega_{n+j_k-1}} d_{\omega_{n+j_k}}x_k=\prod_{s=1}^{j_k-1}{b_{\omega_{n+s}}}\cdot\prod_{s=1}^{j_k}{d_{\omega_{n+s}}}x_k.
$$
Since $\gcd(t,b_{\omega_k}) = 1$ for all $k\in\mathbb{N}_+$, for each $r\in\mathbb{N}_+$, there exists some integer
\[
k_r\in\Big\{0,1,\dots,\prod_{s=1}^{r-1}b_{\omega_{n+s}}-1\Big\}
\]
such that
\[
z+k_rt\equiv0\pmod{\prod_{s=1}^{r-1}b_{\omega_{n+s}}}.
\]
Since $z$ and $t$ are fixed, there exists $k_0\in\N_+$ such that $z+kt>0$ for all $k>k_0$, it follows that there exists $r_0\in\N_+$ such that $z+k_rt>0$ for all $r>r_0$.

Recall that $d_{\omega_{n+s}}= \gcd(t,N_{\omega_{n+s}})$ for all $s\in\mathbb{N}_+$. Since $\gcd(t,b_{\omega_k}) = 1$ for all $k$, we obtain
$\gcd(d_{\omega_{n+s}}, b_{\omega_{n+s}}) = 1$ for each $s$. We claim $j_{k_r} \ge r$. 
Suppose, for contradiction, that $j_{k_r} < r$ for some $r$.  Then the zero
$z+k_r t$ is of the form $N_{\omega_{n+1}}\cdots N_{\omega_{n+j-1}} d_{\omega_{n+j}} \,x$
with $j = j_{k_r} < r$ and $x$ not divisible by $b_{\omega_{n+j}}$.  Consequently
$$
\prod_{s=1}^{r-1}b_{\omega_{n+s}}\bigg| z+k_rt=\prod_{s=1}^{j-1}{b_{\omega_{n+s}}}\cdot\prod_{s=1}^{j}{d_{\omega_{n+s}}}x,
$$
that is 
$$
\prod_{s=j}^{r-1}b_{\omega_{n+s}}\bigg| \prod_{s=1}^{j}{d_{\omega_{n+s}}}x,
$$
which gives a contradiction.

Hence, for all $r\in\N_+$ satisfying $r>r_0$ and $z<\prod_{s=1}^{r-1}b_{\omega_{n+s}}$, we have that $x_{k_r}>1$ and then
\begin{equation*}
\prod_{s=1}^{r-1}b_{\omega_{n+s}}\cdot\prod_{s=1}^{r}d_{\omega_{n+s}}\le\prod_{s=1}^{j_r-1}b_{\omega_{n+s}}\cdot\prod_{s=1}^{j_r}d_{\omega_{n+s}}\le z+k_rt\le 2t\cdot\prod_{s=1}^{r-1}b_{\omega_{n+s}}.
\end{equation*}
Dividing through by $\prod_{s=1}^{r-1}b_{\omega_{n+s}}$, we obtain
\[
\prod_{s=1}^{r}d_{\omega_{n+s}}<2t
\]
for all large $r$. On the other hand,  $\#\big\{k\in\mathbb{N}_+:\# B_{\omega_k}< N_{\omega_k}\big\}=\f$ ensures that $\prod_{s=1}^{r}d_{\omega_{n+s}}\to\infty$ as $r\to\infty$. This leads to a contradiction,  and the conclusion   follows.
\end{proof}

\begin{proof}[Proof of Corollary \ref{cor_finite_random_2}]
This conclusion follows directly from Theorem \ref{thm_finite_consecutive}.
\end{proof}


\begin{thebibliography}{10}
	\bibitem{An-Dong-He-2022}
	L.-X. An, X.~H. Dong and X.-G. He,
	\newblock On spectra and spectral eigenmatrix problems of the planar Sierpinski measures,
	\newblock \emph{Indiana Univ. Math. J.} 71 (2022), no.~2, 913--952; MR4420109
	   
	
	\bibitem{An-Fu-Lai-2019}
	L.-X. An, X.-Y. Fu, C.-K. Lai,
	\newblock On spectral Cantor-Moran measures and a variant of Bourgain's sum of sine problem,
	\newblock \emph{Adv. Math.} 349 (2019), 84–124.
	
	
	\bibitem{Bil03}	P. Billingsley,	\newblock {\em Convergence of probability measures},	\newblock John Wiley \& Sons, Inc., Second edition, 1999.
	\bibitem{Bogachev07}
	\newblock V.~I. Bogachev, {\it Measure theory}, Vol.~1, 2, Springer, Berlin, 2007. 	
	
	
	
	\bibitem{Chen-Liu-2023}
	M.-L. Chen and J. Liu,
	\newblock On spectra and spectral eigenmatrices of self-affine measures on $\Bbb R^n$,
	\newblock \emph{ Bull. Malays. Math. Sci. Soc.} 46 (2023), no.~5, Paper No. 162, 18 pp.; MR4616096
	  
	
	\bibitem{Chen-Liu-Liu-2026}
	S. Chen, J. Liu and M. Liu,
	\newblock The spectral eigenvalues of a class of Moran measures with continuous digits,
	\newblock \emph{Complex Anal. Oper. Theory} 20 (2026), no.~2, Paper No. 55, 16 pp.; MR5027674
	
	\bibitem{Cinlar11}
	E.~Cinlar.
	\newblock {\emph{Probability and Stochastics.}}
	\newblock Springer New York, NY, 2011.   
	
	
	\bibitem{Dai-2012}
	X.-R. Dai,
	\newblock When does a Bernoulli convolution admit a spectrum?,
	\newblock \emph{ Adv. Math.} 231 (2012), no. 3-4, 1681–1693.
	
	
	\bibitem{Dai-He-Lau-2014}
	X.-R. Dai, X.-G. He, K.-S. Lau,
	\newblock On spectral $N$-Bernoulli measures,
	\newblock \emph{Adv. Math.} 259 (2014), 511–531.
	
	\bibitem{Dai-2016}
	X.-R. Dai,
	\newblock Spectra of Cantor measures,
	\newblock \emph{Math. Ann.} 366 (2016), no.~3-4, 1621--1647; MR3563247
	
	\bibitem{Deng-Chen-2021}
	Q.-R. Deng, J.-B. Chen,
	\newblock Uniformity of spectral self-affine measures,
	\newblock \emph{Adv. Math.} 380 (2021), Paper No. 107568, 17 pp.
	
	
	
	
	\bibitem{Dutkay-Haussermann-Lai-2019}
	D. Dutkay, J. Haussermann, C.-K. Lai,
	\newblock Hadamard triples generate self-affine spectral measures,
	\newblock \emph{Trans. Amer. Math. Soc.} 371 (2019), no. 2, 1439–1481.
	
	
	
	\bibitem{Falco03}
	K. Falconer,
	\newblock {\em Fractal geometry: Mathematical foundations and applications},
	\newblock John Wiley \& Sons, Ltd., Third edition, 2014.
	
	

	\bibitem{Fuglede-1974}
	B. Fuglede,
	\newblock Commuting self-adjoint partial differential operators and a group theoretic problem,
	\newblock \emph{J. Funct. Anal.} 16 (1974), 101–121.
	
	\bibitem{Fu-He-2018}
	Y.-S. Fu and L. He,
	\newblock Scaling of spectra of a class of random convolution on $\Bbb{R}$,
	\newblock \emph{J. Funct. Anal.} 273 (2017), no.~9, 3002--3026; MR3692329
	   
	
	\bibitem{Fu-He-Wen-2018}
	Y.-S. Fu, X.-G. He and Z.~X. Wen,
	\newblock Spectra of Bernoulli convolutions and random convolutions,
	\newblock \emph{J. Math. Pures Appl.} (9) 116 (2018), 105--131; MR3826550
	   
    \bibitem{GM22}
    Y. Gu,  J. J. Miao.
    \newblock {Dimensions of a class of self-affine Moran sets.}
    \newblock  \emph{ J. Math. Anal. Appl.} \textbf{513} (2022), 126210.
    
    \bibitem{GM} Y.~Gu and J.~J. Miao.
    \newblock Dimension theory of non-autonomous iterated function systems.
    \newblock arXiv 2309.08151, 2023.
	
	\bibitem{He-Tang-Wu-2019}
	X.-G. He, M.-W. Tang, Z.-Y. Wu,
	\newblock Spectral structure and spectral eigenvalue problems of a class of self-similar spectral measures,
	\newblock \emph{J. Funct. Anal.} 277 (2019), no. 10, 3688–3722.
	
	
	\bibitem{Jorgensen-Pedersen-1998}
	P. Jorgensen, S. Pedersen,
	\newblock Dense analytic subspaces in fractal $L^2$-spaces,
	\newblock\emph{ J. Anal. Math.} 75 (1998), 185–228.
	
	
	\bibitem{Kong-Li-Wang-2026}
	D. Kong, K. Li and Z. Wang,
	\newblock Rational points in Cantor sets and spectral eigenvalue problem for self-similar spectral measures,
	\newblock \emph{Forum Math.} 38 (2026), no.~4, 1243--1255; MR5017464
   
	
	
	\bibitem{Laba-2001}	
	I. Łaba,
	\newblock Fuglede's conjecture for a union of two intervals,
	\newblock \emph{Proc. Amer. Math. Soc.} 129 (2001), no. 10, 2965–2972.
	
	\bibitem{Laba-Wang-2002}
	I. Łaba, Y. Wang,
	\newblock On spectral Cantor measures,
	\newblock \emph{ J. Funct. Anal.} 193 (2002), no. 2, 409–420.
	
	
	
	\bibitem{Li-Wu-2022}
    J. Li and Z.-Y. Wu,
	\newblock On spectral structure and spectral eigenvalue problems for a class of self similar spectral measure with product form,
	\newblock \emph{Nonlinearity} 35 (2022), no.~6, 3095--3117; MR4443929
	
	
	\bibitem{Miao-2022}
	W. Li, J.J. Miao, Z. Wang,
	\newblock Weak convergence and spectrality of infinite convolutions,
	\newblock \emph{Adv. Math.}
	404 (2022), Paper No. 108425, 26 pp.
	
	\bibitem{LMW-2023}
	W. Li, J.J. Miao, Z. Wang,
	\newblock Spectrality of Random Convolutions Generated by Finitely Many Hadamard Triples,
	\newblock \emph{Nonlinearity}	37(2024),  Paper No. 015003, 21 pp.
	

	
	\bibitem{Liu-2024}
	J.-C. Liu, M. Liu, M.-W. Tang and S. Wu,
	\newblock On spectral eigenmatrix problem for the planar self-affine measures with three digits,
	\newblock \emph{Ann. Funct. Anal.} 15 (2024), no.~4, Paper No. 83, 22 pp.; MR4791481
	
	\bibitem{Liu-Tang-Wu-2023}
	J. Liu, M.-W. Tang and S. Wu,
	\newblock The spectral eigenmatrix problems of planar self-affine measures with four digits,
	\newblock \emph{Proc. Edinb. Math. Soc.} (2) 66 (2023), no.~3, 897--918; MR4637402
	  
	\bibitem{LMZ25} 
	 H. Y. Liu, J. J. Miao and H. B. Zhao,
	\newblock {Existence and Spectrality of random measures generated by infinite convolutions},
	\newblock {arXiv:2504.15744v1}.  
	
	
		
	\bibitem{Lu-2026}
	Z.-Y. Lu,
	\newblock The spectral eigenvalue set and Beurling dimension on self-similar measures,
	\newblock \emph{J. Math. Pures Appl.} (9) 205 (2026), Paper No. 103809, 21 pp.; MR4973614
	
	\bibitem{MZ24} 
	J. J. Miao and H. B. Zhao,
	\newblock {Existence  and spectrality of infinite convolutions generated by infinitely many admissible pairs},
	\newblock \emph{J. Fourier Anal. Appl.} 32 (2026), no.~1, Paper No. 12, 24 pp.; MR5013389


    \bibitem{MZ25} 
    J. J. Miao and H. B. Zhao,
    \newblock {Existence, equivalence and spectrality of infinite convolutions in $\R^d$},
    \newblock {	arXiv:2506.06670}.  

	
	\bibitem{MZ26} 
	J. J. Miao and H. B. Zhao,
	\newblock {Spectral properties of infinite convolutions generated by complete residue systems},
	\newblock \emph{Forum Math.} 38 (2026), no.~4, 1009--1026; MR5017452
	   
	
	

		
	\bibitem{Mattila-1995}
	P. Mattila,
	\newblock {\it Geometry of sets and measures in Euclidean spaces},
	\newblock Cambridge Studies in Advanced Mathematics, 44, Cambridge Univ. Press, Cambridge, 1995; MR1333890 
	
	
	
	
	
	\bibitem{BBT08}
	B.~S.~Thomson, J.~B.~Bruckner, A.~M.~Bruckner.
	\newblock{\em Real Analysis}, Second Edition.
	\newblock{www.classicalrealanalysis.com}, 2008.
	
	\bibitem{Walters-1982}
	P. Walters,
	\newblock \emph{An introduction to ergodic theory},
	\newblock Springer-Verlag, New York-Berlin, 1982.
	
	
	
	
\end{thebibliography}
\end{document}